\newtheorem{theorem}{Theorem}[section]
\newtheorem{lemma}[theorem]{Lemma}
\newtheorem{definition}[theorem]{Definition}
\newtheorem{proposition}[theorem]{Proposition}
\newtheorem{corollary}[theorem]{Corollary}
\newtheorem{remark}[theorem]{Remark}
\newtheorem{assumption}{Assumption}
\newcommand{\lra}{\longrightarrow}
\newcommand{\be}{\begin{equation}}
\newcommand{\ee}{\end{equation}}
\newcommand{\bea}{\begin{eqnarray*}}
\newcommand{\eea}{\end{eqnarray*}}
\newcommand{\bean}{\begin{eqnarray}}
\newcommand{\eean}{\end{eqnarray}}
\newcommand{\ben}{\begin{enumerate}}
\newcommand{\een}{\end{enumerate}}
\newcommand{\bi}{\begin{itemize}}
\newcommand{\ei}{\end{itemize}}
\newcommand{\brem}{\begin{remark}}
\newcommand{\erem}{\end{remark}}
\newcommand{\bcen}{\begin{center}}
\newcommand{\ecen}{\end{center}}
\newcommand{\bsv}{\begin{semiverbatim}}
\newcommand{\esv}{\end{semiverbatim}}
\newcommand{\bt}{\begin{theorem}}
\newcommand{\et}{\end{theorem}}
\newcommand{\bl}{\begin{lemma}}
\newcommand{\el}{\end{lemma}}
\newcommand{\bd}{\begin{definition}}
\newcommand{\ed}{\end{definition}}
\newcommand{\bc}{\begin{corollary}}
\newcommand{\ec}{\end{corollary}}
\newcommand{\bp}{\begin{proposition}}
\newcommand{\ep}{\end{proposition}}
\newcommand{\bbC}{{ \mathbb{C}}}
\newcommand{\bbR}{ \mathbb{R}}
\newcommand{\bbX}{ \mathbb{X}}
\newcommand{\bbZ}{ \mathbb{Z}}
\newcommand{\calC}{\mathcal{C}}
\newcommand{\pkg}[1]{\texttt{#1}}
\begin{document}

\title{Bayesian Analysis of Spiked Covariance Models: Correcting Eigenvalue Bias and Determining the Number of Spikes}
\author[1]{Kwangmin Lee}
\author[2]{Sewon Park}
\author[3]{Seongmin Kim}
\author[3]{Jaeyong Lee}
\affil[1]{Department of Big Data Convergence, Chonnam National University}
\affil[2]{Department of Statistics, Sookmyung Women's University}
\affil[3]{Department of Statistics, Seoul National University}

\maketitle

\begin{abstract}
We study Bayesian inference in the spiked covariance model, where a small number of spiked eigenvalues dominate the spectrum. Our goal is to infer the spiked eigenvalues, their corresponding eigenvectors, and the number of spikes, providing a Bayesian solution to principal component analysis with uncertainty quantification.
We place an inverse-Wishart prior on the covariance matrix to derive posterior distributions for the spiked eigenvalues and eigenvectors. Although posterior sampling is computationally efficient due to conjugacy, a bias may exist in the posterior eigenvalue estimates under high-dimensional settings. To address this, we propose two bias correction strategies: (i) a hyperparameter adjustment method, and (ii) a post-hoc multiplicative correction.
For inferring the number of spikes, we develop a BIC-type approximation to the marginal likelihood and prove posterior consistency in the high-dimensional regime $p>n$. Furthermore, we establish concentration inequalities and posterior contraction rates for the leading eigenstructure, demonstrating minimax optimality for the spiked eigenvector in the single-spike case.
Simulation studies and a real data application show that our method performs better than existing approaches in providing accurate quantification of uncertainty  for both eigenstructure estimation and estimation of the number of spikes.

\bigskip
\noindent Key words: Spiked covariance model, Bayesian principal component analysis, Eigenvalue and eigenvector estimation, PCA dimension selection, High-dimensional statistics.
\end{abstract}

%%%%%%%%%%%%%%%%%%%%%%%%%%%%%%%%%%%%%%%%%%%%%
%%%%%%%%%%%%%%%%%%%%%%%%%%%%%%%%%%%%%%%%%%%%%

\section{Introduction}\label{sec-intro}

Covariance matrix estimation is a crucial component of multivariate analysis, as the covariance matrix encodes the dependencies between variables. The sample covariance matrix is a widely used estimator; however, it becomes singular when the number of variables exceeds the number of observations. Moreover, \citet{yin1988limit} and \citet{bai2007asymptotics} showed that the eigenvalues and eigenvectors of the sample covariance matrix may fail to converge to their population counterparts in high-dimensional settings. To address this issue, structural assumptions are often imposed on the covariance matrix. For instance, \citet{cai2010optimal} and \citet{lee2023post} explored banded or bandable covariance matrices, while \citet{cai2013sparse} and \citet{lee2023postecon} studied sparse structures.

In this paper, we consider the spiked covariance model, which assumes that a small number of eigenvalues are significantly larger than the rest. Under this model, most variation in the data is captured along directions associated with these spiked eigenvectors. Our goal is to estimate both the spiked eigenvalues and their corresponding eigenvectors, as well as to determine the number of spikes. These parameters are particularly relevant in principal component analysis (PCA), where the spiked components correspond to dominant directions of variation.

\citet{johnstone2009consistency} investigated the asymptotic behavior of the sample eigenvector under a single-spiked covariance model of the form $\bm{\Sigma} = \nu_p \bm{\xi}_p \bm{\xi}_p^\top + \bm{I}_p$,
where $\nu_p > 0$ and $\bm{\xi}_p \in \mathbb{S}^{p-1}$ with $\mathbb{S}^{p-1} = \{ \bm{x} \in \mathbb{R}^p : \|\bm{x}\|_2 = 1 \}$. In this model, the largest eigenvalue of $\bm{\Sigma}$ is $\nu_p + 1$, with corresponding eigenvector $\bm{\xi}_p$. \citet{johnstone2009consistency} showed that the first sample eigenvector is consistent if and only if $p/n \to 0$, assuming $\nu_p$ is bounded above. Furthermore, the minimax lower bound for estimating $\bm{\xi}_p$ is given by $\min \left\{ (1 + \nu_p)/\nu_p^2 \cdot {p}/{n},\, 1 \right\}$
(see Example 15.19 in \citet{wainwright2019high}). These results imply that in high-dimensional regimes, $p> n$, consistent estimation of the spiked eigenvector requires either a diverging spike, $\nu_p \to \infty$, or additional structural assumptions.

In high-dimensional spiked models, sparsity assumptions on the eigenvectors have been studied by \citet{johnstone2009consistency} %\citet{amini2009high}, 
and \citet{ma2013sparse}, among others, to attain the consistency. In contrast, an alternative line of research including \citet{fan2013large}, \citet{wang2017asymptotics}, and \citet{cai2020limiting} has focused on divergence conditions on spiked eigenvalues without assuming sparsity on eigenvectors. \citet{fan2013large} introduced the pervasiveness condition in statistical factor models to characterize when such divergence conditions are practically relevant. Under the divergent conditions, \citet{wang2017asymptotics} and \citet{cai2020limiting} analyzed the asymptotic behavior of the sample eigenstructure in high dimensions.

Bayesian methods have also been developed to infer the spiked structure of covariance matrices. \citet{bishop1998bayesian} proposed Bayesian PCA using a factor model with isotropic Gaussian noise, but this method lacks consistency guarantees in high dimensions. 
%\citet{berger2020bayesian} presented a new class of Bayesian priors—shrinkage inverse Wishart (SIW) priors—for the covariance matrix of a multivariate normal distribution. These priors are designed to correct the well-known issue that inverse-Wishart and Jeffreys priors tend to force the eigenvalues apart.  
\citet{berger2020bayesian} introduced shrinkage inverse Wishart (SIW) priors to address the eigenvalue separation issue of inverse-Wishart and Jeffreys priors, but their high-dimensional convergence properties remain unexplored.  
\citet{ma2022posterior} considered a sparse Bayesian factor model, where sparsity in the loading matrix implies that principal directions involve only a small subset of variables. While the model achieves consistency, the sparsity assumption limits its applicability.

% \citet{zhong2022empirical} proposed an empirical Bayes estimator for PCA that accommodates high dimensions without sparsity, but it only provides point estimates (posterior means), lacking uncertainty quantification via credible intervals.

There has been significant interest in estimating the number of spikes in spiked covariance or factor models, as this parameter is essential for determining the effective dimensionality of the data. Specifically, this problem has been studied in the context of selecting the number of principal components in PCA.
% Frequentist approaches based on random matrix theory include \citet{baik2005phase} and \citet{bai2018consistency}. 
For example, \citet{bai2002determining} and \citet{ahn2013eigenvalue} proposed methods for determining the number of factors in approximate factor models by penalizing cross-sectional and time-series dimensions, and by identifying sharp declines in the eigenvalue spectrum, respectively.
 \citet{ke2023estimation} introduced a two-step method leveraging bulk eigenvalues under a gamma-distributed residual covariance model to robustly estimate $K$. Bayesian approaches, such as those proposed by \citet{bishop1998bayesian} and \citet{lopes2004bayesian} %, and \citet{bouveyron2020exact},  
  attempt to infer the number of latent components by placing priors on the model dimension or rank. However, these methods lack asymptotic consistency guarantees.  
\citet{minka2000automatic} proposed an approximate marginal likelihood approach to develop a Bayesian procedure for selecting the number of principal components, and \citet{hoyle2008automatic} showed that this approach is consistent when \( p < n \). To the best of our knowledge, no theoretical result has established the asymptotic consistency of any Bayesian PCA dimension selection method when \( p > n \).

%established the consistency of AIC and BIC in PCA when the number of variables is smaller than the number of observations (\( p < n \)). For the high-dimensional regime (\( p > n \)), they proposed modified criteria, namely qAIC and qBIC. \citet{cai2020limiting} further derived limiting laws for divergent spiked eigenvalues and the largest non-spiked eigenvalue, providing refined tools for spike detection. 

%Although \citet{ma2022posterior} established posterior consistency under a sparse Bayesian factor model, their results rely on strong sparsity assumptions for the loading matrix, limiting applicability. To the best of our knowledge, no Bayesian method currently provides posterior consistency guarantees for the number of spikes in general high-dimensional settings without a sparsity assumption on the eigenvectors.

In this work, we study Bayesian inference for both the number of spikes and the associated spiked eigenstructure. Specifically, we formulate the problem through the joint posterior distribution \( \pi(\lambda_{1:K}, \bm{\xi}_{1:K}, K \mid \bbX_n) \), where \( \lambda_{1:K} \) and \( \bm{\xi}_{1:K} \) denote the top \( K \) spiked eigenvalues and their corresponding eigenvectors, and \( K \) is the unknown number of spikes. 
Here, $\bbX_n$ denotes the set of $n$ observations $\bm{X}_1,\ldots, \bm{X}_n$.
This formulation highlights a key advantage of the fully Bayesian approach: it simultaneously estimates the principal components and their dimensionality, while providing coherent uncertainty quantification for both. We decompose the posterior as $\pi(\lambda_{1:K}, \bm{\xi}_{1:K}, K \mid \bbX_n) = \pi(\lambda_{1:K}, \bm{\xi}_{1:K} \mid K, \bbX_n)\, \pi(K \mid \bbX_n)$,
and propose Bayesian methods for estimating \( \pi(\lambda_{1:K}, \bm{\xi}_{1:K} \mid K, \bbX_n) \) and \( \pi(K \mid \bbX_n) \), respectively.

For the estimation of $\pi(\lambda_{1:K}, \bm\xi_{1:K} \mid K, \bbX_n)$, 
we proceed by first obtaining the posterior distribution of the covariance matrix, from which the posterior distributions of the eigenvalues and eigenvectors are derived. 
Suppose an inverse-Wishart prior is imposed on the covariance matrix. 
The inverse-Wishart prior enables efficient posterior sampling by allowing direct draws from the closed-form posterior distribution owing to conjugacy.
However, in high-dimensional settings, we observe that the posterior estimates of $\lambda_{1:K}$ from the inverse-Wishart posterior may exhibit systematic bias. This bias motivates us to propose two bias-correction strategies: 
(i) a hyperparameter calibration approach that adjusts the hyperparmeter of inverse-Wishart prior, and 
(ii) a post-processing correction method. 
The proposed methods are theoretically justified through an eigenvalue perturbation framework developed in this study. 
Moreover, owing to the conjugacy of the inverse-Wishart prior, posterior samples by the bias-correction strategies can be generated independently, without requiring Markov Chain Monte Carlo (MCMC) convergence diagnostics. 
As a result, accurate inference can be obtained with a relatively small number of posterior samples. 
This independence also makes the posterior sampling procedure trivially parallelizable across multiple cores, further accelerating computation in proportion to the available computing resources and offering a computational advantage.

For estimating $\pi(K \mid \bbX_n)$, we adopt a Bayesian model selection approach by placing a prior on $K$ and approximating the marginal likelihood using a BIC-type criterion. This allows us to efficiently evaluate the posterior distribution over different values of $K$. Furthermore, we establish posterior consistency for the number of spikes, demonstrating that our method can correctly recover the true number of spikes in high-dimensional regimes.

We also study the asymptotic properties of the inverse-Wishart posterior for estimating spiked eigenvalues and eigenvectors. Previous works have primarily focused on the asymptotic behavior of sample eigenstructures \citep{johnstone2009consistency, wang2017asymptotics, cai2020limiting}. These results crucially rely on the rank-deficiency of the sample covariance matrix when \( p > n \). In contrast, posterior samples drawn from the inverse-Wishart prior are full-rank, which complicates asymptotic analysis. We develop novel concentration inequalities for spiked eigenstructures of full-rank random matrices and apply them to the inverse-Wishart setting to tackle this challenge. Under the single-spiked model, we further show that the posterior achieves the minimax optimal rate for estimating the leading eigenvector.

The rest of the paper is organized as follows. In Section \ref{sec:spiked}, we introduce the spiked covariance model. Section \ref{ssec:bayeseigen} presents the Bayesian method for estimating the spiked eigenstructure with the posterior contraction rate analysis given the number of spikes. 
Section \ref{sec:numpc} presents the Bayesian method for estimating the number of spikes with the analysis of posterior consistency.
In Section \ref{sec:numerical}, we illustrate the proposed method through simulation studies and real data analysis. The concluding remarks are given in Section \ref{sec:concludingremark}. We also provide the additional simulation studies and the proofs of theorems in the Appendix.

\section{Spiked Covariance Model and Factor Representation}\label{sec:spiked}
% Definition of the spiked covariance model.

Suppose $\bm{X}_1, \ldots, \bm{X}_n$ are random samples from a $p$-dimensional multivariate normal distribution $N_p(\bm{0}_p, \bm{\Sigma})$, where $\bm{\Sigma} \in \calC_p$ and $\calC_p$ is the set of all $p \times p$ positive definite matrices. Let $\lambda_1 \geq \ldots \geq \lambda_p > 0$ and $\bm{\xi}_1, \ldots, \bm{\xi}_p$ denote the eigenvalues and corresponding eigenvectors of $\bm{\Sigma}$, respectively.  The covariance matrix $\bm{\Sigma}$ is referred to as a spiked covariance when the top $K$ eigenvalues are much larger than the remaining ones; that is, $\lambda_1 \geq \ldots\ge  \lambda_K >> \lambda_{K+1} \geq \ldots \geq \lambda_p$.

In particular, we consider the following model: 
\bean \bm{X}_1,\ldots, \bm{X}_n &\sim& N_p(\bm{0}, \bm\Sigma),\label{eq:model}\\
\frac{\lambda_{K+1}p}{\lambda_K n} &\lra& 0,\label{eq:spikecondition}\\
%\frac{K^3}{n} &\lra & 0,\label{eq:Kcondition}\\
\frac{\lambda_{k}}{\lambda_{k+1}} &>& C , ~k=1,\ldots, K \label{eq:seperatedeigens} \eean for some positive constant $C > 1$. Condition \eqref{eq:seperatedeigens} assumes that the top $K$ eigenvalues are well-separated. Condition \eqref{eq:spikecondition} ensures that the top eigenvalues dominate relative to $\lambda_{K+1}p/n$. In other words, if $\lambda_{K+1}p/n$ is small, the top eigenvalues need not be large; however, if $\lambda_{K+1}p/n$ is large, they must exceed $\lambda_{K+1}p/n$ significantly. %The additional condition on $K$ in \eqref{eq:Kcondition} allows for a regime of a growing number of spiked eigenvalues. Conditions (2) and (3) are less restrictive than Assumption 2 in \cite{cai2020limiting}, in which $\lambda_{K+1}$ is bounded, $p/(n\lambda_K) \lra 0$, and $K^6/n \lra 0$.
%\footnote{JL: (2)번 조건이 보통 쓰는 조건 보다 완화된 조건이라는 얘기를 쓰면 좋겠다. }
%The rationale of Condition A1 is explained by the statistical factor model with the pervasiveness condition.
%In statistical factor models, the pervasiveness condition is often employed to ensure identifiability. Interestingly, the pervasiveness condition implies Condition A1.  \footnote{JL: 이 말이 맞는 말인지 체크할 것. 이 말이 틀린 말이면 올바르게 고칠 것. pervasiveness condition에 대한 reference가 있으면 좋겠다.  A1을 pervasiveness condition에 빗대어 설명하는데 pervasiveness condition이 어디에 필요한 조건인지 독자들은 모른다. 이 조건이 어디에 쓰이는 조건인지 먼저 말하는 것이 필요하다.  $\lra$ KL : 위 파란 글씨와 같이 수정하였습니다. (원래 문장 : In statistical factor models, the pervasiveness condition is often employed to ensure identifiability. Interestingly, the pervasiveness condition implies Condition A1.)}  
The condition \eqref{eq:spikecondition} requires the spiked eigenvalue to diverge whenever $\lambda_{K+1}p\gtrsim n$. 

\cite{fan2013large} introduced the pervasiveness condition to demonstrate the practicality of the divergent condition in the context of the statistical factor model. We describe the factor model and the pervasiveness condition, demonstrating that the pervasiveness condition ensures condition \eqref{eq:spikecondition}.
Suppose a $p$-dimensional observation $\bm{X}$ is explained by $K$ unobserved factors $\bm{f} = (f_{1},\ldots, f_{K})^T \in \bbR^K$ and can be represented by
\begin{equation}
    \begin{aligned}
\bm{X}\mid \bm{f} =  \sum_{k=1}^K\bm{b}_k f_{k} + \bm\epsilon,\quad\bm\epsilon \sim N_p(\bm{0}_p,   \bm\Sigma_u), \quad
\bm{f} \sim N_K(\bm{0}_K, \bm{I}_K), 
 \end{aligned}
\label{eq:fm}
\end{equation}
where $\bm{b}_k\in \bbR^p $ quantifies the effect of the $k$-th factor $f_k$ on the observation $\bm{X}$, and $ \bm\epsilon$ represents the error of $\bm{X}$ that is not explained by the factors.
By integrating out $f_{1},\ldots, f_{K}$, the observation $\bm{X}$ follows a multivariate normal distribution $N_p (\bm{0}_p,\bm{B} \bm{B}^T + \bm\Sigma_u) $, where $\bm{B} = (\bm{b}_1,\ldots,\bm{b}_K)\in \bbR^{p\times K}$.
Additionally, we suppose the columns of $\bm{B}$ are orthogonal, which is the canonical condition for the identifiability (see Proposition 2.1 in \cite{fan2013large}).

If $||\bm\Sigma_u||_2$ is bounded and $||\bm{b}_k||_2^2$ diverges at a rate of at least $p$ as $p\lra \infty$, i.e., $\liminf_{p\lra \infty}||\bm{b}_k||_2^2/p > 0$, $k=1,\ldots, K$, we define the factor model \eqref{eq:fm} satisfies the pervasiveness condition (see Assumption 2.1 in \cite{fan2021robust}), which means that the factor $f_k$ affects a substantial proportion of the variation in the observations. Without loss of generality, we assume $||\bm{b}_1||_2 \ge \ldots \ge ||\bm{b}_K||_2$. Since $\lambda_k(\bm\Sigma) \ge \lambda_k(\bm{B} \bm{B}^T) = ||\bm{b}_k||_2^2$, $k=1,\ldots, K$, and $\lambda_{K+1}(\bm\Sigma) \le ||\bm\Sigma_u||_2$, the pervasiveness condition yields $(\lambda_{K+1}p)/(\lambda_{K}n) \le (p ||\bm\Sigma_u||_2 )/(n ||\bm{b}_K||_2^2  ) \lesssim 1/n$, which implies \eqref{eq:spikecondition}.

The spiked covariance model provides a natural framework for capturing low-rank signal structures in high-dimensional data, with the top \(K\) eigencomponents representing the dominant variation. We therefore focus on Bayesian inference for the spiked covariance model, particularly on its posterior distribution, which factorizes as
\[
\pi(\lambda_{1:K}, \bm\xi_{1:K}, K \mid \bbX_n) 
= \pi(\lambda_{1:K}, \bm\xi_{1:K} \mid K, \bbX_n) \, \pi(K \mid \bbX_n).
\]
The conditional posterior \(\pi(\lambda_{1:K}, \bm\xi_{1:K} \mid K, \bbX_n)\) is discussed in Section~\ref{ssec:bayeseigen}, while the marginal posterior \(\pi(K \mid \bbX_n)\) is examined in Section~\ref{sec:numpc}. Details on Bayesian inference for the non-spiked component of the spiked covariance matrix are provided in Section~3 of \citet{lee2023postecon}.

\section{Bayesian Inference of Spiked Eigenstructure}\label{ssec:bayeseigen}

\subsection{Bias Correction in Posterior Eigenvalues}\label{ssec:biascorrect}

We consider the Bayesian inference of the spiked structure given the number of spikes, i.e., the estimation of $\pi(\lambda_{1:K}, \bm\xi_{1:K} \mid K, \bbX_n)$. 
As \cite{wang2017asymptotics} and \cite{cai2020limiting} inferred spiked eigenvalues and eigenvectors based on the sample covariance matrix, we derive the posterior distribution of spiked eigenvalues and eigenvectors from that of the covariance matrix.

Suppose that we place an inverse-Wishart (IW) prior on the population covariance matrix, $\bm\Sigma \sim IW_p(\bm{A}_n, \nu_n)$, with density 
\(
\pi(\bm\Sigma) \propto |\bm\Sigma|^{-\nu_n/2} \exp\left\{- \textup{tr}(\bm\Sigma^{-1} \bm{A}_n)/2\right\},
\)
where $\nu_n > 2p$ is the degrees of freedom and $\bm{A}_n$ is a $p \times p$ positive definite scale matrix.
Due to conjugacy, the posterior distribution of $\bm\Sigma$ given the data $\bbX_n$ is also inverse-Wishart:
\begin{equation}\label{eq:IWposterior}
\bm\Sigma \mid \bbX_n \sim IW_p\left(\bm{A}_n + n \bm{S}_n, \nu_n + n\right), 
\end{equation}
where $\bm{S}_n = \sum_{i=1}^n \bm{X}_i \bm{X}_i^\top / n$. While this conjugate framework is computationally attractive, we have observed that the posterior of eigenvalues derived from the inverse-Wishart posterior may be inflated. For example, when the degrees of freedom are set to \( \nu_n = 2p + 2 \) and the scale matrix is set to \( \bm{A}_n = \bm{O} \), the resulting posterior distribution has been empirically observed to inflate the eigenvalues, as illustrated in Figure \ref{fig:inflation_evals}. This inflation becomes more pronounced in high-dimensional settings and leads to overestimation of the eigenvalues. 
\begin{figure}[ht!]
\centering
\includegraphics[width=\textwidth]{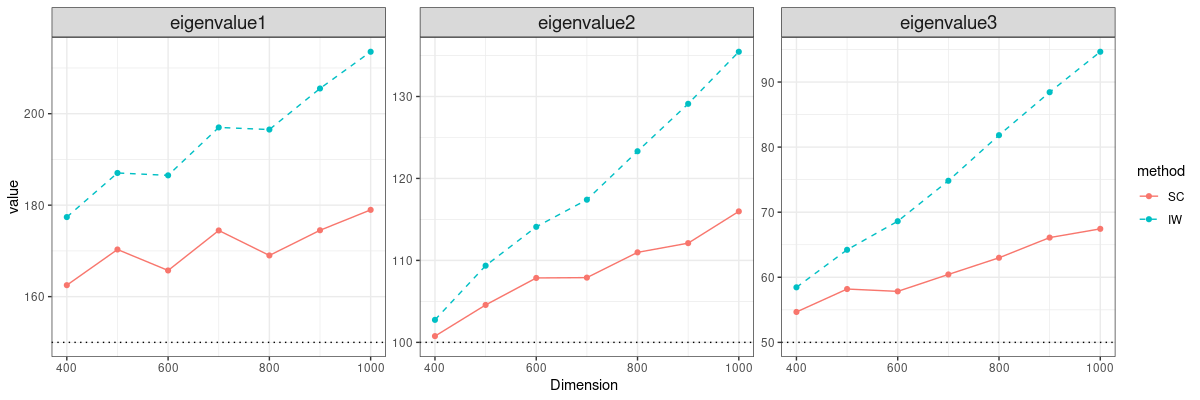}
\caption{\label{fig:inflation_evals} Bias inflation in leading eigenvalue estimates from sample covariance (SC) and inverse-Wishart posterior (IW) over increasing dimension \(p\). Each panel shows the average of the top three eigenvalue estimates across 10 replicate data sets. The solid line represents the eigenvalues of the sample covariance matrix, and the dashed line corresponds to the posterior means under the inverse-Wishart prior. The dotted line indicates the true eigenvalues, given by  \(\lambda_1 = 150\), \(\lambda_2 = 100\), \(\lambda_3 = 50\), with all remaining eigenvalues set to 1.}
\end{figure}

We propose two strategies to mitigate the inflation of posterior eigenvalues: adjusting the prior’s degrees-of-freedom parameter (prior calibration) and applying a post-hoc transformation to the posterior eigenvalue samples. The first strategy is to increase the degree-of-freedom hyperparameter \(\nu_n\) of the IW prior.  The degrees of freedom determines the level of shrinkage of the posterior distribution toward the scale matrix, with larger values of \(\nu_n\) leading to stronger shrinkage, thereby alleviating the overestimation (inflation) of posterior eigenvalues.  
Specifically, when we are interested in the $k$th eigenvalue $(k\le K)$, we propose
%\bean\label{eq:biasnu}
%\nu_n = 2p + 2 + \frac{2tn(1-K/p)}{\sqrt{1+4t(1-t)(1-K/p)}-1} - n,
%\eean
\bean\label{eq:biasnu}
\nu_n 
=
\frac{
n \lambda_k^+ + \sqrt{(n \lambda_k^+)^2 + 4(n \lambda_k(\bm S_n) - \hat{c} p)\Lambda^+}
}{
2(n \lambda_k(\bm S_n) - \hat{c} p)/n
}
- n + 2p + 2,
\eean
where
\(
\lambda_k^+ := \lambda_k\left(\bm{S}_n + \bm{A}_n / n \right), \quad
\Lambda^+ := \sum_{l=K+1}^p \lambda_l\left( \bm{S}_n + \bm{A}_n / n \right), \quad
\hat{c} =  {\sum_{j=K+1}^p \lambda_j(\bm{S}_n)} / (p-K-pK/n),
\)
and \(\lambda_k(\cdot)\) denotes the \(k\)th largest eigenvalue of a matrix.
We also set the scale matrix to satisfy \(\| \bm{A}_n \| = O(1)\). 
We refer to this approach as the \emph{prior calibration strategy}. 
This choice of $\nu_n$ is justified by the theoretical analysis given in Section~\ref{ssec:eigenbias}, which ensures that the posterior eigenvalue \( \lambda_k(\bm{\Sigma}) \) is centered around its true population counterpart \( \lambda_k(\bm{\Sigma}_0) \). This method is also asymptotically justified as given in Section \ref{eq:asymptotic}. However, since the suggested value of \( \nu_n \) depends on the index \( k \), the prior calibration strategy cannot correct the  $K$ leading eigenvalues at once using a single degree-of-freedom value. This structural limitation becomes restrictive when bias correction is needed for a large number of leading spiked eigenvalues.

To overcome this drawback, we propose the \emph{post-hoc correction strategy}. 
We generate posterior samples \( \lambda_k(\bm{\Sigma}) \) from \eqref{eq:IWposterior} with \(\nu_n\) satisfying \( \nu_n - 2p = o(n) \) and a scale matrix satisfying \( \| \bm{A}_n \| = O(1) \). 
We then define the post-processed posterior eigenvalue as
\bean\label{eq:biasadj}
\lambda_k^{\mathrm{adj}}(\bm\Sigma) :=
\frac{\gamma_2(\lambda_k(\bm{S}_n), \hat{c})}
     {\tilde{\gamma}_1(\nu_n,\bm A_n,\bm{S}_n, k)}
     \, \lambda_k(\bm{\Sigma}),
\eean
where the functions \(\tilde{\gamma}_1(\cdot)\) and \(\gamma_2(\cdot)\) are given by
%\bean
%\tilde{\gamma}_1(\nu_n, \lambda_k(\bm{S}_n), \hat{c})
%&:=& \frac{n}{n + \nu_n - 2p - 2} 
%\left[1 + \frac{(p-K)\hat{c}}{(n + \nu_n - 2p - 2)\lambda_k(\bm{S}_n)}  \right], \label{eq:gamma1tilde}\\
%\gamma_2(\lambda_k(\bm{S}_n), \hat{c}) 
%&:=& 1 - \frac{\hat{c} p}{n \lambda_k(\bm{S}_n)}.\label{eq:gamma2}
%\eean
{\footnotesize
\bean
\tilde{\gamma}_1(\nu_n, \bm A_n,\bm{S}_n, k)
&:=& \frac{n}{n + \nu_n - 2p - 2} 
\frac{\lambda_k(\bm S_n+\bm A_n/n)}{\lambda_k(\bm S_n)}
\left[1 + \frac{\sum_{l=K+1}^p \lambda_l(\bm S_n+\bm A_n/n)}{(n + \nu_n - 2p - 2)\lambda_k(\bm S_n+\bm A_n/n)}  \right], \label{eq:gamma1tilde}\\
\gamma_2(\lambda_k(\bm{S}_n), \hat{c}) 
&:=& 1 - \frac{\hat{c} p}{n \lambda_k(\bm{S}_n)}.\label{eq:gamma2}
\eean}

The theoretical justification for the correction factor 
\(
\gamma_2 / \tilde{\gamma}_1
\)
is provided in Section~\ref{ssec:eigenbias}, and also asymptotically justified as given in Section \ref{eq:asymptotic}.

Next, for the Bayesian inference of the spiked eigenvectors $\bm{\xi}_{1:K}$, we use posterior samples from the inverse-Wishart distribution \eqref{eq:IWposterior}. 
Each posterior draw of $\bm{\Sigma}$ from \eqref{eq:IWposterior} is decomposed into its eigenvectors, and the inference is based on the posterior distribution
\(
[\bm{\xi}_k(\bm{\Sigma}) \mid \bbX_n],
\)
where $\bm{\xi}_k(\cdot)$ denotes the operator that extracts the $k$th eigenvector of a positive definite matrix. While we apply bias correction to the spiked eigenvalues obtained from the inverse-Wishart posterior, we use the eigenvectors from the same posterior without any modification.  
This approach  is consistent with \citet{wang2017asymptotics},  which debiased the sample eigenvalues while retaining the eigenvectors of the sample covariance matrix.  
The validity of this procedure is guaranteed by posterior consistency, to be established in Section~\ref{eq:asymptotic}.

In summary, to approximate the posterior distribution of $\lambda_k$ and $\bm\xi_k$ with $k \le K$, we have suggested two methods:  
(i) by the prior calibration method, we impose the inverse-Wishart with the degree of freedom in \eqref{eq:biasnu}, and draw $N$ independent samples $\bm\Sigma_1, \dots, \bm\Sigma_N$ from the inverse-Wishart posterior, yielding posterior samples $(\lambda_k(\bm\Sigma_j), \bm\xi_k(\bm\Sigma_j))_{j=1}^N$;  
(ii) by the post-hoc correction, we draw $N$ independent samples $\bm\Sigma_1, \dots, \bm\Sigma_N$ from the inverse-Wishart posterior with arbitrary $\nu_n$ satisfying $\nu_n-2p=o(n)$, and then compute adjusted posterior samples $(\lambda_k^{\text{adj}}(\bm\Sigma_j), \bm\xi_k(\bm\Sigma_j))_{j=1}^N$.  
Both methods yield independent posterior samples for eigenvalues and eigenvectors, which serve as a Monte Carlo approximation to $[\lambda_k(\bm\Sigma), \bm\xi_k(\bm\Sigma) \mid \bbX_n]$.  
This procedure avoids convergence issues commonly associated with MCMC methods and remains computationally efficient due to conjugacy.  
Furthermore, because all posterior draws are independent, the computation can be trivially parallelized across multiple cores, resulting in additional speed-ups proportional to the available computing resources.

\subsection{Theoretical Analysis of Eigenvalue Bias}\label{ssec:eigenbias}

We provide a theoretical justification for the bias correction methods introduced in \eqref{eq:biasnu} and \eqref{eq:biasadj}.
To explain the phenomenon of eigenvalue inflation illustrated in Figure \ref{fig:inflation_evals}, we analyze the inflation of eigenvalues from the IW posterior and, based on this analysis, derive the degree-of-freedom parameter in \eqref{eq:biasnu} and the adjustment factors in \eqref{eq:biasadj} required to mitigate this bias.
Our theoretical analysis characterizes the range of high-dimensional regimes and the conditions under which these bias correction strategies remain valid.

Throughout this section, we use the following notation: for an integer $K$, we write $[K] = \{1, \ldots, K\}$. For sequences $\{a_n\}$ and $\{b_n\}$, we write $a_n = o(b_n)$ if $a_n / b_n \to 0$, and $a_n \lesssim b_n$ if there exists a constant $C > 0$ such that $a_n \le C b_n$ for all sufficiently large $n$. 
Similarly, $a_n = O(b_n)$ means there exists a constant $C > 0$ such that $|a_n| \le C |b_n|$ for all sufficiently large $n$. 
For random variables $X_n$ and positive sequences $a_n$, we write $X_n = O_p(a_n)$ if, for any $\epsilon > 0$, there exists a constant $M > 0$ such that $\mathbb{P}(|X_n| > M a_n) < \epsilon$ for all sufficiently large $n$. 
The notation \( X_n=  o_p(a_n) \) denotes a term that converges to zero in probability when divided by \( a_n \).

To understand the structure of eigenvalue bias, we introduce an eigenvalue perturbation framework formalized in Theorem~\ref{thm:eigenbias}, inspired by Rayleigh--Schrödinger perturbation theory \citep{schrodinger1926quantisierung}.
Theorem~\ref{thm:eigenbias} provides a multiplicative approximation for the \( k \)-th eigenvalue of a covariance matrix \( \bm{\Sigma} \) for \( k \le K \), in terms of the leading principal submatrix \( \bm{\Omega}_{11} \in \mathcal{C}_K \) defined in \eqref{eq:Sigmadecomp} for an arbitrary orthogonal matrix $\bm\Gamma$.
Specifically, as shown in \eqref{eq:SigmakRS}, the eigenvalue \( \lambda_k(\bm{\Sigma}) \) is approximated by \( \lambda_k(\bm{\Omega}_{11}) \) multiplied by a correction factor containing a higher-order residual term \( R \).

\begin{theorem}\label{thm:eigenbias}
Let \( \bm{\Sigma} \in \mathcal{C}_p \) and let \( \bm{\Gamma} \) be a \( p \times p \) orthogonal matrix. Define
\bean\label{eq:Sigmadecomp}
 \begin{pmatrix}
    \bm{\Omega}_{11} & \bm{\Omega}_{12} \\
    \bm{\Omega}_{21} & \bm{\Omega}_{22}
\end{pmatrix} = \bm{\Gamma}^T \bm{\Sigma} \bm{\Gamma},
\eean
where \( \bm{\Omega}_{11} \in \mathcal{C}_K \), \( \bm{\Omega}_{22} \in \mathcal{C}_{p-K} \). Suppose, for arbitrary $d_1,d_2>0$ and $C>1$,
\[
\|\bm{\Omega}_{22}\|_2 \le x, \quad 
\left( 1 \vee \sum_{l\le K, l\neq k} \left|\frac{\lambda_k}{C(\lambda_l-\lambda_k)} \right|^{d_1/2} \vee \sum_{l\le K, l\neq k} \left|\frac{\lambda_k}{C(\lambda_l-\lambda_k)} \right|^{d_{2}/2} \right) \|\bm{\Omega}_{21}\bm{\xi}_l\|_2 \le x,
\]
and \( (4eCx/\lambda_k) < 1 \), where 
$\lambda_l = \lambda_l(\bm\Omega_{11})$ and \( \bm{\xi}_l  = \bm\xi_l(\bm\Omega_{11})\). If $\lambda_k(\bm\Omega_{11})>1$, then
\begin{align}
\lambda_k(\bm{\Sigma}) &= \lambda_k(\bm{\Omega}_{11}) \left[1 + \frac{\|\bm{\Omega}_{21}\bm{\xi}_k\|^2}{\lambda_k(\bm{\Omega}_{11})^2} + R \right], \label{eq:SigmakRS}\\
R &\le \left( \frac{4eCx }{\lambda_k(\bm{\Omega}_{11})} \right)^3 \left(1 - \frac{4eCx}{\lambda_k(\bm{\Omega}_{11})} \right)^{-1}. \nonumber
\end{align}
\end{theorem}

Applying equation \eqref{eq:SigmakRS} in Theorem~\ref{thm:eigenbias} to the inverse-Wishart posterior, we obtain Theorem~\ref{thm:IWeigenbias}, which provides an approximation for the $k$th eigenvalue of inverse-Wishart posterior sample. Theorem~\ref{thm:IWeigenbias} relies on Assumptions~\ref{assump:spike}--\ref{assump:scalegap}, which describe the spiked structure of the population covariance matrix along with distributional conditions on the data. We first state the assumptions required for Theorem~\ref{thm:IWeigenbias}.
% --- Assumptions ---
\begin{assumption}[Spike eigenvalue condition]\label{assump:spike}
Let \( d_i := p / (n \lambda_{0,i}) \). The spike eigenvalues satisfy
\(
d_i (\log n)^3 \to 0 \quad \text{for all } i = 1, \ldots, K,
\)
and the non-spike true eigenvalues are assumed to be bounded. Additionally, we assume \( K / n^{1/6} \to 0 \) and \( K^2 d_K \to 0 \).
\end{assumption}

\begin{assumption}[Moment bound]\label{assump:moment}
For all \( d \in \mathbb{N}_+ \), there exist constants \( c_d > 0 \) such that \( \mathbb{E}|X_{ij}|^d \le c_d \) for all \( i = 1, \ldots, n \) and \( j = 1, \ldots, p \), where $\bm X_i = (X_{i1},\ldots, X_{ip})^T$.
\end{assumption}

\begin{assumption}[Bulk eigenvalue separation]\label{assump:bulk}
Let \( \bm\Sigma_1 = \bm{U}_{0,2} \bm\Lambda_{0,2} \bm{U}_{0,2}^\top \), where \( \bm\Lambda_{0,2} = \mathrm{diag}(\lambda_{0,K+1}, \ldots, \lambda_{0,p}) \) and \( \bm{U}_{0,2} \in \mathbb{R}^{p \times (p-K)} \) are the matrices of the $K+1$th to $p$th eigenvalues and eigenvectors. Let \( m_1(z) \) be the solution to
\(
m_1(z) = -{1}/{\Big(z -  \operatorname{tr}\left( \left( \bm{I} + m_1(z) \bm\Sigma_1 \right)^{-1} \bm\Sigma_1 \right)/n\Big)}, \quad z \in \mathbb{C}^+,
\)
and define
\(
\gamma_+ = \inf \left\{ x \in \mathbb{R} : F_0(x) = 1 \right\},
\)
where \( F_0(x) \) is the cumulative distribution function determined by \( m_1(z) \) (see the third display on page 4 in \cite{bao2013local}). Suppose that
\[
\limsup_{n \to \infty} \lambda_{0,K+1} d < 1,
\quad \text{where} \quad
d = -\lim_{z \to \gamma_+,\, z \in \mathbb{C}^+} m_1(z).
\]
\end{assumption}

\begin{assumption}[Relative scale of target eigenvalue]\label{assump:scalegap}
Let \( k \in \{1, \ldots, K\} \) denote the index of target eigenvalue. Assume
\(
{K \lambda_{0,1} (\log n)^2}/{(n \lambda_{0,k})} = o(1).
\)

\end{assumption}

Assumptions~\ref{assump:spike}--\ref{assump:bulk} are adapted from Theorem 2.5 of \citet{cai2020limiting} (specifically, Assumptions 2, 7, and 8 therein). Assumption~\ref{assump:moment} is satisfied under Gaussianity. 
Assumptions~\ref{assump:spike} and \ref{assump:bulk} impose eigenvalue separation conditions ensuring that the top \( K \) eigenvalues are spiked. These conditions hold, for example, under the flat bulk scenario \( \lambda_{0,K+1} = \cdots = \lambda_{0,p} = \sigma^2 \) with bounded \( \sigma^2 \), and when \( p / (n \lambda_{0,k}) \to 0 \) for all \( k = 1, \ldots, K \), where \( \lambda_{0,k} \) denotes the \( k \)th eigenvalue of \( \bm\Sigma_0 \); see Remark 1.9 of \citet{bao2015universality} and Remark 7 of \citet{cai2020limiting}. Assumption~\ref{assump:scalegap} places a lower bound on the scale gap between the $k$th and first eigenvalues. Specifically, it requires
\(
{\lambda_{0,k}}/{\lambda_{0,1}} \gg {K (\log n)^2}/{n},
\)
which does not contradict the ordering $\lambda_{0,k} < \lambda_{0,1}$. 
This condition holds provided that $\lambda_{0,k}$ remains sufficiently large relative to $\lambda_{0,1}$—that is, $\lambda_{0,k}$ must be at least of order $\lambda_{0,1} {K (\log n)^2}/{n}$.

\begin{theorem}\label{thm:IWeigenbias}
Suppose \( \bm{X}_1, \ldots, \bm{X}_n \) are independent samples with \( \mathbb{E}(\bm{X}_i) = \bm{0} \) and \( \mathbb{E}(\bm{X}_i \bm{X}_i^\top) = \bm{\Sigma}_0 \), where \( p > n > K \).  
Assume that Assumptions~\ref{assump:spike}--\ref{assump:scalegap} hold, and suppose $\bm\Sigma$ follows the posterior distribution \eqref{eq:IWposterior} with hyperparameters satisfying \( \nu_n - 2p = o(n) \) and \( \|\bm{A}_n\| = O(1) \).  

Let  
\(
\bm{\Omega}_{11} := \hat{\bm{\Gamma}}_1^\top \bm{\Sigma} \hat{\bm{\Gamma}}_1,
\quad
\bm{\Omega}_{21} := \hat{\bm{\Gamma}}_2^\top \bm{\Sigma} \hat{\bm{\Gamma}}_1,
\)
where \( \hat{\bm{\Gamma}}_1 \in \mathbb{R}^{p \times K} \) is the matrix of the top \( K \) eigenvectors of  
\(
\hat{\bm{\Sigma}} = {(n \bm{S}_n + \bm{A}_n)}/{(n + \nu_n - 2p - 2)},
\)
and \( \hat{\bm{\Gamma}}_2 \in \mathbb{R}^{p \times (p-K)} \) consists of the remaining eigenvectors of $\hat{\bm\Sigma}$.  

Then, for \( k \in [K] \),
\bean\label{eq:IWeigenapprox}
\lambda_k(\bm{\Sigma}) 
= \lambda_k(\bm{\Omega}_{11}) \left( 1 + \frac{\|[\bm{\Omega}_{21}]_k\|^2}{\lambda_k(\bm{\Omega}_{11})^2} + o_p\left( \frac{p}{n \lambda_{0,k}} \right) \right),
\eean
where \( [\bm{\Omega}_{21}]_k \) denotes the \( k \)th column of \( \bm{\Omega}_{21} \).  

Furthermore, the posterior distribution of \( \bm{\Omega}_{11} \) is
\bean\label{eq:Omega11IW}
\bm{\Omega}_{11} \mid \mathbb{X}_n \sim IW_K\left( (n + \nu_n - 2p - 2) \hat{\bm{\Lambda}}_1, \, n + \nu_n - 2p + 2K \right),
\eean
where \( \hat{\bm{\Lambda}}_1 = \mathrm{diag}(\hat{\lambda}_1, \ldots, \hat{\lambda}_K) \), and \( \hat{\lambda}_k \) is the \( k \)th eigenvalue of \( \hat{\bm{\Sigma}} \).  

The posterior expectation of \( \|[\bm{\Omega}_{21}]_k\|^2 \) is given by
\bean\label{eq:postexpectation}
\mathbb{E}\left( \|[\bm{\Omega}_{21}]_k\|^2 \mid \mathbb{X}_n \right)
= \frac{(n - \nu_n - 2p - 2) \, \hat{\lambda}_k \, \sum_{l=K+1}^p \hat{\lambda}_l}
{(n + \nu_n - 2p - 1)(n + \nu_n - 2p - 4)}.
\eean 
\end{theorem}
The proof of Theorem~\ref{thm:IWeigenbias} is given in the Appendix \ref{sec:pf3.2}. Leveraging Theorem~\ref{thm:IWeigenbias}, we obtain the following approximation for the $k$-th posterior eigenvalue:
\bean\label{eq:IWapprox1}  
\lambda_k(\bm{\Sigma}) \approx \tilde{\gamma}_1(\nu_n, \bm A_n, \bm{S}_n,k) \, \lambda_k(\bm{S}_n),
\eean
where $\tilde{\gamma}_1(\nu_n,\bm A_n,  \bm{S}_n, k)$ is defined in \eqref{eq:gamma1tilde} and explicitly characterizes the bias of the posterior eigenvalue relative to the sample eigenvalue. To derive equation \eqref{eq:IWapprox1}, note that \( K \ll n \), equation~\eqref{eq:Omega11IW} implies that the eigenvalue \( \lambda_k(\bm{\Omega}_{11}) \) concentrates around a rescaled sample eigenvalue:
\(
\lambda_k(\bm{\Omega}_{11}) \approx {n}/{(n + \nu_n - 2p - 2)} \lambda_k(\bm{S}_n+ \bm A_n/n) .
\)
Replacing \( \lambda_k(\bm{\Omega}_{11}) \) in equation~\eqref{eq:IWeigenapprox} with this approximation yields
\(
\lambda_k(\bm{\Sigma}) \approx \gamma_1(\nu_n, \bm A_n,\bm{S}_n, k, \bm{\Omega}_{21}) \, \lambda_k(\bm{S}_n),
\)
where
{\footnotesize
\begin{equation*}
\gamma_1(\nu_n,  \bm A_n,\bm{S}_n, k, \bm{\Omega}_{21}) 
= \frac{n}{n + \nu_n - 2p - 2}
\frac{\lambda_k(\bm{S}_n+ \frac{\bm A_n}{n}) }{\lambda_k(\bm S_n)}
\left[ 1 + \frac{\| [\bm{\Omega}_{21}]_k \|^2}{ \left\{ \frac{n}{n + \nu_n - 2p - 2} \lambda_k(\bm{S}_n+\frac{\bm A_n}{n}) \right\}^2 }
+ o_p\!\left( \frac{p}{n \lambda_{0,k}} \right) \right].
\end{equation*}
}

To make the bias structure explicit, we approximate \( \|[\bm{\Omega}_{21}]_k\|^2 \) in \( \gamma_1 \) by its posterior expectation given in \eqref{eq:postexpectation}, leading to
\(
\gamma_1(\nu_n,  \bm A_n,\bm{S}_n, k, \bm{\Omega}_{21})
\approx \tilde{\gamma}_1(\nu_n,  \bm A_n,\bm{S}_n, k),
\)
where  the approximations $(n - \nu_n - 2p - 2) / (n - \nu_n - 2p - 1) \approx 1,\quad (n - \nu_n - 2p - 2) / (n - \nu_n - 2p - 4) \approx 1$ are applied. Moreover, the remainder term \( o_p\left( {p}/{(n \lambda_{0,k})} \right) \) is dominated by 
\(
\left[ (n + \nu_n - 2p - 2) \, \lambda_k\!\left(\bm S_n + \bm A_n / n\right) \right]^{-1}  \sum_{l=K+1}^p \lambda_l\!\left(\bm S_n + \bm A_n / n\right) 
\)
when $||\bm S_n||  \gg ||\bm A_n||$.
Combining these approximations leads to \eqref{eq:IWapprox1}, which expresses the posterior eigenvalue bias relative to the corresponding sample eigenvalue.

To further connect the posterior eigenvalue with the population eigenvalue, we analyze the bias of the sample eigenvalue relative to its population counterpart. Using the result of \citet{wang2017asymptotics}, we have  
\bean\label{eq:sampleapprox}  
\lambda_k(\bm{\Sigma}_0) \approx \gamma_2(\lambda_k(\bm{S}_n), \hat{c}) \, \lambda_k(\bm{S}_n),
\eean  
where $\gamma_2(\lambda_k(\bm{S}_n), \hat{c})$ is defined in \eqref{eq:gamma2}.  
This expression is a reformulation of  
\(
\lambda_k(\bm{S}_n) \approx \lambda_k(\bm{\Sigma}_0) + {\hat{c} p}/{n},
\)
as derived in \citet{wang2017asymptotics}. By combining \eqref{eq:IWapprox1} and \eqref{eq:sampleapprox}, we determine the degree-of-freedom parameter \(\nu_n\) by solving
\(
\tilde{\gamma}_1(\nu_n, \bm A_n,\bm{S}_n, k) = \gamma_2(\lambda_k(\bm{S}_n), \hat{c}),
\)
which yields the prior calibration rule in \eqref{eq:biasnu}.  For the post-hoc correction in \eqref{eq:biasadj}, each posterior eigenvalue \(\lambda_k(\bm{\Sigma})\) is multiplied by
\(
{\gamma_2(\lambda_k(\bm{S}_n), \hat{c})}/{\tilde{\gamma}_1(\nu_n, \bm A_n,\bm{S}_n, k)},
\)
thereby adjusting for both bias components in \eqref{eq:IWapprox1} and \eqref{eq:sampleapprox} and aligning the posterior eigenvalues with the population eigenvalues.

\subsection{Posterior asymptotic analysis}\label{eq:asymptotic}

We develop a general framework for posterior contraction of spiked eigenvalues and eigenvectors. The framework can be applied to arbitrary random positive definite matrices and is used to analyze the convergence of the posterior distribution of the spiked eigenstructure in Section~\ref{ssec:biascorrect}. Formally, a sequence $\epsilon_n \to 0$ is called a posterior contraction rate at $\theta_0$ with respect to a loss function $d$ if, for any $M_n \to \infty$,
\(
\pi\{ \theta : d(\theta,\theta_0) \ge M_n \epsilon_n \mid \mathbb{X}_n \} \to 0
\)
in $\mathbb{P}(\mathbb{X}_n;\theta_0)$-probability; see \citet{ghosal2017fundamentals}.

We begin by establishing the framework for spiked eigenvalues. 
The following result provides a concentration bound for the leading eigenvalues of a random positive definite matrix $\bm{\Sigma}$ relative to a fixed reference matrix $\bm{\Sigma}_0 \in \mathcal{C}_p$. 
Specifically, we assess the probability that
\(
P\left( \sup_{l=1,\ldots,k} \left| {\lambda_l(\bm{\Sigma})}/{\lambda_l(\bm{\Sigma}_0)} - 1 \right| > C t \right).
\)

\begin{theorem}\label{thm:higheigen}
    Suppose $\bm{\Sigma}$ is a positive definite random matrix, and $\bm{\Sigma}_0 \in \mathcal{C}_p$ is fixed. Let $k, K \in [p]$ with $k \le K$, and let $\bm{u}_{0,1},\ldots, \bm{u}_{0,p}$ be the eigenvectors of $\bm{\Sigma}_0$.
    Define $\bm{\Gamma} = [\bm{u}_{0,1}, \ldots, \bm{u}_{0,K}]$ and $\bm{\Gamma}_\perp = [\bm{u}_{0,K+1}, \ldots, \bm{u}_{0,p}]$. 
    Suppose the eigengap condition $\min_{l=1,\ldots,K-1} \{\lambda_{0,l}/\lambda_{0,l+1}\} > c$ holds for some $c > 1$. Then, for all $t \le \delta$,
    \begin{align*}
        P\left( \sup_{l=1,\ldots,k} \left| \frac{\lambda_l(\bm{\Sigma})}{\lambda_{0,l}} - 1 \right| > C t \right) 
        &\le P\left( \left\| \bm{\Gamma}_\perp^T \bm{\Sigma}_0^{-1/2} \bm{\Sigma} \bm{\Sigma}_0^{-1/2} \bm{\Gamma}_\perp \right\|_2^{1/2} \frac{\sqrt{\lambda_{0,K+1}}}{\sqrt{\lambda_{0,k}}} > t \right) \\
        &\quad + P\left( K \left\| \bm{\Gamma}^T \bm{\Sigma}_0^{-1/2} \bm{\Sigma} \bm{\Sigma}_0^{-1/2} \bm{\Gamma} - \bm{I}_K \right\|_2 > t \right),
    \end{align*}
    where $\delta$ and $C$ are positive constants depending on $c$.
\end{theorem}
The proof is provided in the Appendix \ref{sec:pf3.3}. Theorem \ref{thm:higheigen} shows that the concentration of the leading eigenvalues of $\bm{\Sigma}$ is governed by the deviation of the scaled covariance matrix $\bm{\Sigma}_0^{-1/2} \bm{\Sigma} \bm{\Sigma}_0^{-1/2}$ from the identity on both the spiked and non-spiked subspaces.
Building on this framework, Theorem \ref{corr:IWeigenvalue} examines the contraction behavior of the posterior distribution of the spiked eigenvalues introduced in Section~\ref{ssec:biascorrect}.
Since $\nu_n$ from \eqref{eq:biasnu} satisfies $\nu_n - 2p = o(n)$, equation \eqref{eq:IWeigen1} provides the posterior contraction rate of the posterior eigenvalues under the posterior calibration rule \eqref{eq:biasnu}, and equation \eqref{eq:IWeigen2} gives the posterior contraction rate for the post-hoc correction method in \eqref{eq:biasadj}.

\begin{theorem}\label{corr:IWeigenvalue}
    Suppose $\bm{X}_1,\ldots, \bm{X}_n \sim N_p(\bm{0}_p, \bm{\Sigma}_0)$, and that $\bm{\Sigma}_0$ satisfies conditions \eqref{eq:spikecondition}--\eqref{eq:seperatedeigens}, with $K^3/n = o(1)$ and ${p}/{n^2} = o(1)$. 
    Consider the IW prior $\bm{\Sigma} \sim IW_p(\bm{A}_n, \nu_n)$.
    Let $k \in [K]$ and define
    \(
    \epsilon_n^2 = {K^3}/{n} + {\lambda_{0,K+1}}/{\lambda_{0,k}} \left( {p}/{n} \vee 1 \right),
    \)
    where $\lambda_{0,1} \ge \cdots \ge \lambda_{0,p} > 0$ are the eigenvalues of $\bm{\Sigma}_0$. 
    If $\nu_n- 2p = o(n)$ and $\|\bm{A}_n\| = O(1)$, then
    \bean\label{eq:IWeigen1}
    \pi\left( \left| \sup_{l=1,\ldots, k}\frac{\lambda_l(\bm{\Sigma})}{\lambda_{0,l}} - 1 \right| > M_n \epsilon_n \,\Big|\, \mathbb{X}_n \right) \to 0
    \eean
    in probability for any $M_n \to \infty$.

    For the post-hoc correction method \eqref{eq:biasadj}, suppose $(p-K)^{-1}\sum_{j=K+1}^p \lambda_{0,j} = \bar c + o_p(n^{-1/2})$ for some positive constant $\bar c$ and define 
    \(
    \epsilon_n^{(2)} = \epsilon_n + (\nu_n - 2p - 2) / (n + \nu_n - 2p - 2) + p \bar{c} / (n \lambda_{0,k}).
    \)
    If $\nu_n - 2p = o(n)$ and $\|\bm{A}_n\| = O(1)$, then
    \bean\label{eq:IWeigen2}
    \pi\left( \sup_{l=1,\ldots,k} \left| \frac{\lambda_l^{\mathrm{adj}}(\bm{\Sigma})}{\lambda_{0,l}} - 1 \right| > M_n \epsilon_n^{(2)} \,\Big|\, \mathbb{X}_n \right) \to 0
    \eean
    in probability for any $M_n \to \infty$.
\end{theorem}

The proof is provided in the Appendix \ref{sec:pf3.4}.  
Theorem \ref{corr:IWeigenvalue} requires the additional condition \( {p}/{n^2} = o(1) \), which is not restrictive since it is satisfied whenever \( {p}/{n} \to c \) for any constant \( c \in \mathbb{R} \).  Theorem \ref{corr:IWeigenvalue} shows that the posterior eigenvalue obtained via the prior calibration strategy has the contraction rate as
\(
\epsilon_n = \sqrt{{K^3}/{n}} + \sqrt{{\lambda_{0,K+1}}/{\lambda_{0,k}}}\left( \sqrt{{p}/{n}} \vee 1 \right),
\)
while the posterior contraction rate of the post-hoc correction method \eqref{eq:biasadj} is  
\(
    \epsilon_n^{(2)} = \epsilon_n + (\nu_n - 2p - 2) / (n + \nu_n - 2p - 2) + p \bar{c} / (n \lambda_{0,k}).
\)
For the analysis of \eqref{eq:biasadj}, Theorem \ref{corr:IWeigenvalue} additionally assumes that the non-spiked true eigenvalues satisfy  
\(
(p-K)^{-1}\sum_{j=K+1}^p \lambda_{0,j} = \bar c + o_p(n^{-1/2})
\)
for some positive constant \(\bar c\).  
This condition holds, for example, when the non-spiked eigenvalues are bounded.  
This assumption follows Assumption 2.2 in \citet{wang2017asymptotics}.

%The conditions on \( K \), \( \lambda_{0,k} \), and \( \lambda_{0,K+1} \) for consistency, i.e., conditions under which \( \epsilon_n \to 0 \), are less restrictive than Assumptions 2 and \( 2' \) in \cite{cai2020limiting}.  Theorem 2.1 with Assumptions 2 and \( 2' \) in \cite{cai2020limiting} requires 
%\[
%\frac{K^6}{n} = o(1), \quad \frac{p}{n\lambda_{0,k}} = o(1), \quad \text{and that} \quad \lambda_{0,K+1} \ \text{is bounded}.
%\]

Next, we establish a theoretical framework for analyzing the contraction of eigenvectors $\bm{\xi}_k(\bm{\Sigma})$ toward their true counterparts $\bm{\xi}_{0,k}$. 
We measure the discrepancy between two unit vectors $\bm{u}$ and $\bm{u}_0$ by 
\(
d(\bm{u}, \bm{u}_0) = 1 - (\bm{u}^\top \bm{u}_0)^2,
\)
which is invariant under sign changes. Theorem \ref{thm:highevector} provides a general concentration inequality for eigenvectors.

\begin{theorem}\label{thm:highevector}
Suppose $\bm\Sigma$ is a random positive definite matrix, and $\bm\Sigma_0$ is a fixed positive definite matrix. Let $K \in [p]$, and let $\lambda_{0,1} \ge \cdots \ge \lambda_{0,p} > 0$ be the eigenvalues of $\bm\Sigma_0$, with corresponding eigenvectors $\bm{u}_{0,1}, \ldots, \bm{u}_{0,p}$. Likewise, let $\lambda_1 \ge \cdots \ge \lambda_p > 0$ be the eigenvalues of $\bm\Sigma$, with corresponding eigenvectors $\bm{u}_1, \ldots, \bm{u}_p$.
Define $\bm\Gamma = [\bm{u}_{0,1}, \ldots, \bm{u}_{0,K}]$ and $\bm\Gamma_\perp = [\bm{u}_{0,K+1}, \ldots, \bm{u}_{0,p}]$, and set
\(
B_k = \sup_{l=1,\ldots,k} \left\{ \left( \sup_{i \le l-1} \lambda_{0,l} / \lambda_{0,i} \right) \vee \left( \sup_{i \ge l+1} \lambda_{0,i} / \lambda_{0,l} \right) \right\}, \quad k \in [K].
\)

Assume $\min_{l=1,\ldots,K-1} (\lambda_{0,l}/\lambda_{0,l+1}) > c$ for some $c > 1$, and $(\lambda_{0,K+1}/\lambda_k) \le d$ for some $d < 1$.

Then, for all $t \le \delta$, we have:
{\footnotesize
\begin{align}
P\left( \sup_{l=1,\ldots, k} \left\{1-(\bm{u}_l^\top \bm{u}_{0,l})^2\right\} > Ct \right)
&\le
P\left( \left\| \bm\Gamma_\perp^\top \bm\Sigma_0^{-1/2} \bm\Sigma \bm\Sigma_0^{-1/2} \bm\Gamma_\perp \right\|_2^{1/2} \frac{\sqrt{\lambda_{0,K+1}}}{\sqrt{\lambda_{0,k}}} > \sqrt{t} \right) \nonumber \\
&\quad +
P\left( \left\| \bm\Gamma^\top \bm\Sigma_0^{-1/2} \bm\Sigma \bm\Sigma_0^{-1/2} \bm\Gamma - \bm{I}_K \right\|_2 > \frac{\sqrt{t}}{\sqrt{B_k}} \wedge \delta_2 \right) \nonumber \\
&\quad +
P\left( \sup_{l=1,\ldots,k} \left| \frac{\lambda_l}{\lambda_{0,l}} - 1 \right| > \delta_1 \right),
\label{eq:eigenupper1}
\end{align}}
and
{\footnotesize
\begin{align}
P\left( \sup_{l=1,\ldots, k} \left\{1-(\bm{u}_l^\top \bm{u}_{0,l})^2\right\} > Ct \right)
&\le
P\left( \left\| \bm\Sigma_0^{-1/2} \bm\Sigma \bm\Sigma_0^{-1/2} - \bm{I}_p \right\|_2 \frac{\sqrt{\lambda_{0,K+1}}}{\sqrt{\lambda_{0,k}}} > \sqrt{t} \right) \nonumber \\
&\quad +
P\left( \left\| \bm\Gamma^\top \bm\Sigma_0^{-1/2} \bm\Sigma \bm\Sigma_0^{-1/2} \bm\Gamma - \bm{I}_K \right\|_2 > \frac{\sqrt{t}}{\sqrt{B_k}} \wedge \delta_2 \right) \nonumber \\
&\quad +
P\left( \sup_{l=1,\ldots,k} \left| \frac{\lambda_l}{\lambda_{0,l}} - 1 \right| > \delta_1 \right),
\label{eq:eigenupper2}
\end{align}}
where $C$, $\delta_1$, $\delta_2$, and $\delta$ are positive constants depending only on $c$ and $d$.
\end{theorem}

The proof of Theorem \ref{thm:highevector} is given in the Appendix \ref{sec:pf3.5}.  
Note that Theorem \ref{thm:highevector} provides two types of concentration inequalities: equation \eqref{eq:eigenupper1} is particularly useful when $p > n$, while equation \eqref{eq:eigenupper2} is suited for $p \le n$. %\changed{Both results are used in the proof of Theorem \ref{corr:IWeigenvector}.}{바로 뒷문장이랑 내용이 중복. 제거해도 될듯} 

By Theorem~\ref{thm:highevector}, we obtain Theorem~\ref{corr:IWeigenvector}, which establishes the posterior contraction rate for the top \(k\) eigenvectors as
$
\epsilon_n = B_k K / n + p \lambda_{0,K+1} / (n \lambda_{0,k}),
$
where \(B_k\) is defined in Theorem~\ref{thm:highevector}.%}{앞에 정리 3.5에서 정의했으므로 제거해도 될듯}
In contrast to Theorem~3.2 of \citet{wang2017asymptotics} and Theorem~4.1 of \citet{cai2020limiting}, this result ensures uniform convergence over the top \(k\) eigenvectors.

\begin{theorem}\label{corr:IWeigenvector}
Suppose the same setting as in Theorem \ref{corr:IWeigenvalue}. \\ 
Let 
\(
B_k = \sup_{l=1,\ldots,k} \left\{ \left( \sup_{i \le l-1} {\lambda_{0,l}}/{\lambda_{0,i}} \right) \vee \left( \sup_{i \ge l+1} {\lambda_{0,i}}/{\lambda_{0,l}} \right) \right\}
\)
and define 
\(
\epsilon_n = {B_k K}/{n} + {p \lambda_{0,K+1}}/{(n \lambda_{0,k})}.
\)
Then
\(
\pi\left( \sup_{l=1,\ldots, k} \left\{ 1 - (\bm{\xi}_l(\bm{\Sigma})^\top \bm{\xi}_{l}(\bm{\Sigma}_0))^2 \right\} > M_n \epsilon_n \mid \mathbb{X}_n \right) \to 0
\)
in probability for any positive sequence $M_n \to \infty$.
\end{theorem}

The proof is given in the Appendix \ref{sec:pf3.6}. 

The posterior contraction rate of the eigenvectors achieves minimax optimality under the single spiked covariance model 
\(
\bm{\Sigma}_0  = \nu_p \bm{\xi}_p \bm{\xi}_p^\top + \bm{I}_p,
\)
where $\bm{\xi}_p  \in \mathbb{S}^{p-1}$ and $\nu_p>0$. The eigenvalues of $\bm{\Sigma}_0$ are 
\(
\lambda_{0,1} = \nu_p + 1
\)
and 
\(
\lambda_{0,2} = \cdots = \lambda_{0,p} = 1,
\)
with the first eigenvector being $\bm{\xi}_p$.
Theorem \ref{corr:IWeigenvector} with $K=1$ and $k=1$ gives the posterior contraction rate for the first eigenvector as 
\(
{(1+p)}/{(n\nu_p + n)},
\)
which is asymptotically equivalent to the minimax lower bound given in Proposition \ref{prop:minimaxlower}.

\begin{proposition}\label{prop:minimaxlower}
Suppose $\bm{X}_1,\ldots, \bm{X}_n$ are independent samples from 
\(
N_p(\bm{0}_p, \nu_p \bm{\xi}_p\bm{\xi}_p^\top + \bm{I}_p).
\)
Let $\hat{\bm{\xi}}_p$ denote an eigenvector estimator. Then, the minimax lower bound is 
    \[
     \inf_{\hat{\bm{\xi}}_p} \sup_{\bm{\xi}_p\in \mathbb{S}^{p-1}} 
     \mathbb{E} \big[ 1-(\hat{\bm{\xi}}_p^\top \bm{\xi}_p)^2 \big] 
     \gtrsim 
     \min \left\{ \frac{1+\nu_p}{\nu_p^2} \frac{p}{n}, \, 1 \right\}.
    \]
\end{proposition}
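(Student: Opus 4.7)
The plan is to prove the lower bound by a standard application of Fano's method, separating the argument into the genuinely nontrivial regime (where $\frac{1+\nu_p}{\nu_p^2}\frac{p}{n} \le 1$) and the trivial regime (where this quantity exceeds a constant, so the lower bound of order $1$ follows from the fact that the sine-squared loss is itself bounded below by a positive constant against any fixed estimator, e.g.\ via Le Cam's two-point method with two nearly orthogonal eigenvectors).

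For the interesting regime, I would first set up the sine-squared loss as a pseudo-metric on the real projective space $\mathbb{S}^{p-1}/\{\pm 1\}$. A standard volumetric argument (e.g.\ Lemma 5.2 of \cite{vershynin2018high} or the packing bounds in \cite{wainwright2019high}) yields, for any sufficiently small $\delta>0$, a packing $\{\bm\xi^{(1)},\ldots,\bm\xi^{(M)}\}\subset \mathbb{S}^{p-1}$ with $\log M \gtrsim p$ and $\|\bm\xi^{(i)}-\bm\xi^{(j)}\|_2\ge \delta$ for $i\neq j$, chosen on a hemisphere so that $\bm\xi^{(i)\top}\bm\xi^{(j)}\ge 0$; this implies
\begin{equation*}
1-(\bm\xi^{(i)\top}\bm\xi^{(j)})^2 = (1-\bm\xi^{(i)\top}\bm\xi^{(j)})(1+\bm\xi^{(i)\top}\bm\xi^{(j)}) \ge \tfrac{1}{2}\|\bm\xi^{(i)}-\bm\xi^{(j)}\|_2^2 \ge \tfrac{\delta^2}{2}.
\end{equation*}

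The next step is to control the KL divergence between two models $N_p(\bm 0, \nu_p \bm\xi\bm\xi^\top + \bm I_p)$ and $N_p(\bm 0, \nu_p \bm\eta\bm\eta^\top + \bm I_p)$. Since both covariances share determinant $1+\nu_p$, the log-determinant terms in the Gaussian KL divergence cancel. Using Sherman--Morrison to write the inverse of $\bm I_p + \nu_p \bm\eta\bm\eta^\top$ as $\bm I_p - \frac{\nu_p}{1+\nu_p}\bm\eta\bm\eta^\top$ and expanding the trace, a direct calculation gives, per observation,
\begin{equation*}
\mathrm{KL}\bigl(N_p(\bm 0,\bm\Sigma_{\bm\xi})\,\|\,N_p(\bm 0,\bm\Sigma_{\bm\eta})\bigr) = \frac{\nu_p^2}{2(1+\nu_p)}\bigl(1-(\bm\xi^\top\bm\eta)^2\bigr),
\end{equation*}
so the $n$-fold product KL for any pair in the packing is at most $\frac{n\nu_p^2}{2(1+\nu_p)}$.

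Finally, I would invoke the generalized Fano inequality (e.g.\ Theorem 15.12 in \cite{wainwright2019high}): the minimax risk is bounded below by
\begin{equation*}
\frac{\delta^2}{4}\left(1-\frac{\frac{n\nu_p^2}{2(1+\nu_p)}+\log 2}{\log M}\right).
\end{equation*}
Choosing $\delta^2 \asymp \frac{(1+\nu_p) p}{\nu_p^2 n}$ (small enough so the packing exists and the Fano fraction is bounded away from $1$, using $\log M \gtrsim p$) yields the claimed lower bound $\gtrsim \frac{1+\nu_p}{\nu_p^2}\frac{p}{n}$. Capping at $1$ handles the regime where this quantity exceeds a constant.

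The main obstacle is purely bookkeeping: verifying that the hemispherical packing simultaneously achieves $\log M \gtrsim p$ and the sine-squared separation $\gtrsim \delta^2$, and choosing the constants so that the Fano fraction is bounded away from $1$ uniformly in $(\nu_p,p,n)$. The KL computation and invocation of Fano are standard once the packing is in place.
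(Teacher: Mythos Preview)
Your Fano approach is a valid route in principle and differs from the paper's proof, which does not run Fano from scratch: the paper instead shows the elementary inequality $1-(\hat{\bm\xi}_p^T\bm\xi_p)^2 \ge \tfrac{1}{2}\min\{\|\hat{\bm\xi}_p-\bm\xi_p\|_2^2,\|{-}\hat{\bm\xi}_p-\bm\xi_p\|_2^2\}$, reduces the minimax risk in sine-squared loss to the minimax risk in $\ell_2$ loss, and then cites Example~15.19 of \cite{wainwright2019high} directly. So the paper's argument is a two-line reduction to a known result, whereas you are reproving that result.

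However, your sketch has a genuine gap in the Fano step. You take a packing of a \emph{hemisphere} with separation $\delta$ and then bound the pairwise KL by the global constant $\tfrac{n\nu_p^2}{2(1+\nu_p)}$, using only $1-(\bm\xi^{(i)\top}\bm\xi^{(j)})^2\le 1$. With this bound and $\log M\gtrsim p$, the Fano fraction is of order $\tfrac{n\nu_p^2}{(1+\nu_p)p}\asymp 1/\delta^2$ for your choice $\delta^2\asymp \tfrac{(1+\nu_p)p}{\nu_p^2 n}$, which blows up rather than staying bounded away from $1$; your claim that ``the Fano fraction is bounded away from $1$'' is therefore false as written. The fix is to use a \emph{local} packing: take $\bm\xi^{(j)}=\sqrt{1-\delta^2}\,\bm e_1+\delta\,\bm u^{(j)}$ where $\{\bm u^{(j)}\}$ is a fixed-separation packing of $\mathbb{S}^{p-2}\subset\bm e_1^\perp$. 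This still gives $\log M\gtrsim p$ (independent of $\delta$) and separation $\asymp\delta$, but now $1-(\bm\xi^{(i)\top}\bm\xi^{(j)})^2\lesssim\delta^2$, so the pairwise KL is $\lesssim \tfrac{n\nu_p^2}{1+\nu_p}\delta^2$. With this corrected KL bound, your choice of $\delta^2$ does make the Fano fraction a small constant, and the rest of your argument goes through.
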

The proof is given in the Appendix \ref{sec:pf3.7}.

\section{Bayesian inference of the Number of Spikes}\label{sec:numpc}

We consider the problem of estimating the number of spikes \(K\) in a spiked covariance model using a Bayesian approach. Given observed data \(\bbX_n = (X_1, \ldots, X_n)\), the posterior distribution of \(K\) is given by $\pi(K \mid \bbX_n) \propto \pi(K) p(\bbX_n \mid K)$,
where \(\pi(K)\) denotes the prior on \(K\), and \(p(\bbX_n \mid K)\) is the marginal likelihood under the model with \(K\) spikes. Since the marginal likelihood is unavailable in closed form, we approximate it using the Bayesian Information Criterion (BIC), following \cite{kass1995bayes}:
\bean\label{eq:postK}
\pi(K \mid \bbX_n) \approx \frac{\exp(-\mathrm{BIC}_K / 2) \, \pi(K)}{\sum_{k=1}^p \exp(-\mathrm{BIC}_k / 2) \, \pi(k)}.
\eean

In particular, Section \ref{ssec:BICcomp} details the computation of $\mathrm{BIC}_k$, and Section~\ref{sec:posterior-consistency} establishes the posterior contraction rate for \eqref{eq:postK}.

%To facilitate both theoretical analysis and the computation of BIC, we consider the spiked covariance model
%\(
%\bm{X}_1,\ldots, \bm{X}_n \sim N_p(\bm{0}, \bm\Sigma_0),
%\)
%where the population eigenvalues of \(\bm\Sigma_0\) are ordered as
%\(\lambda_{0,1} > \cdots > \lambda_{0,K_0} \gg \lambda_{0,K+1} = \cdots = \lambda_{0,p},\)
%and \(K_0\) denotes the true number of spikes in the population covariance matrix \(\bm\Sigma_0\).
%That is, the top \(K_0\) spiked eigenvalues are strictly separated from the flat bulk of non-spike eigenvalues.
%The assumption that all non-spike eigenvalues are equal is crucial for quantifying model complexity in the BIC computation, 
%and it enables us to establish the posterior consistency given in Section \ref{sec:posterior-consistency}. This homogeneity assumption is acceptable for our purposes, as the non-spike eigenvalues are nuisance parameters - in other words, they are not of direct inferential interest. Moreover, because the spike eigenvalues dominate in magnitude, structural assumptions on the bulk have negligible influence on the estimation of the spike eigenstructure.

\subsection{Computation of the Bayesian Information Criterion}\label{ssec:BICcomp}

We now describe the procedure for computing the BIC to approximate the marginal likelihood. For a model with \( K \) spikes, the BIC is given by
\(
\mathrm{BIC}_K = -2 \hat{L}_K + d_K \log n,
\)
where \( \hat{L}_K \) denotes the maximized log-likelihood under the model with \( K \) spikes, and \( d_K \) is the number of free parameters. Let \( \bm{S}_n \) be the sample covariance matrix with ordered eigenvalues \( \hat{\lambda}_1 \ge \cdots \ge \hat{\lambda}_p \) and corresponding eigenvectors \( \hat{\bm{U}} \in \bbR^{p\times p}\). We retain the top \( K \) eigenvalues and approximate the remaining \( p - K \) eigenvalues with their average:
\(
\hat{c}_K = \sum_{k = K+1}^p \hat{\lambda}_k/(p-K).
\)
The covariance estimator becomes
\(
\hat{\bm{\Sigma}}_K = \hat{\bm{U}} \, \mathrm{diag}(\hat{\lambda}_1, \ldots, \hat{\lambda}_K, \hat{c}_K, \ldots, \hat{c}_K) \, \hat{\bm{U}}^\top,
\)
and the corresponding log-likelihood is given by
\[
\hat{L}_K = -\frac{n}{2} \log |\hat{\bm{\Sigma}}_K| - \frac{n}{2} \mathrm{tr}(\hat{\bm{\Sigma}}_K^{-1} \bm{S}_n) +\textup{constant}.
\]

The number of free parameters \( d_K \) includes three components: (i) \( pK - K(K+1)/2 \) degrees of freedom for the orthonormal matrix \( \bm{\Gamma} \in \mathbb{R}^{p \times K} \), accounting for orthogonality constraints; (ii) \( K \) parameters for the distinct spiked eigenvalues; and (iii) one parameter for the common remaining eigenvalues. Thus, the total parameter count is
\(
d_K = pK - K(K+1)/2 + K + 1.
\)
Omitting constants independent of \( K \), the BIC simplifies to
\[
\mathrm{BIC}_K = C + n \sum_{k=1}^K \log \hat{\lambda}_k + n(p - K) \log \hat{c}_K + d_K \log n,
\]
where \( C \) is a constant not depending on \( K \). 

\subsection{Posterior Consistency} \label{sec:posterior-consistency}

We study the asymptotic behavior of the BIC to establish the posterior consistency of \eqref{eq:postK}. Using this asymptotic result, we show that the posterior distribution \( \pi(K \mid \bbX_n) \) concentrates on the true number of spikes \( K_0 \) as \( n \to \infty \).

We first present the following asymptotic result for the BIC:
\begin{theorem}\label{thm:BIC}
Let \( \bm{X}_1, \ldots, \bm{X}_n \) be independent samples with \( \mathbb{E}(\bm{X}_i) = \bm{0} \) and \( \mathbb{E}(\bm{X}_i \bm{X}_i^\top) = \bm{\Sigma}_0 \), and suppose that Assumptions~\ref{assump:spike}--\ref{assump:scalegap} hold.  
Let \( K_0 \) denote the true number of spikes, and let \( \lambda_{0,k} \) denote the \( k \)-th eigenvalue of \( \bm{\Sigma}_0 \).  
Then there exists a constant \( C > 0 \) such that, with high probability and for all sufficiently large \( n \),
\[
\mathrm{BIC}_K - \mathrm{BIC}_{K_0} \ge
\begin{cases}
C n \sum_{k=K+1}^{K_0} \hat\lambda_k, & \text{if } K < K_0, \\
C (K - K_0) \dfrac{p \log n}{n}, & \text{if } K > K_0 \text{ and } K - K_0 = o(n).
\end{cases}
\]
\end{theorem}

The proof is given in the Appendix \ref{sec:pf4.1}. This result implies that \(\mathrm{BIC}_{K_0}\), the BIC for the true model, is asymptotically minimal among all \(K = o(n)\). From a Bayesian perspective, it is natural to place negligible prior mass on large values of \(K\), reflecting the assumption that the true number of spikes is much smaller than the sample size. As a result, models with \(K \gtrsim n\) contribute negligibly to the posterior distribution, and it is unnecessary to analyze their marginal likelihoods in detail. This justifies restricting asymptotic analysis to the sublinear regime \(K = o(n)\) when performing Bayesian inference on the number of spikes.

From the BIC-based posterior approximation
\(
\pi(K \mid \bbX_n) \propto \exp(-\mathrm{BIC}_K / 2) \, \pi(K),
\)
it follows that
\begin{equation*}
    \pi(K_0 \mid \bbX_n) = \left(1 + \sum_{K \ne K_0} \exp\left( -\tfrac{1}{2} (\mathrm{BIC}_K - \mathrm{BIC}_{K_0}) \right) \cdot \frac{\pi(K)}{\pi(K_0)} \right)^{-1}.
\end{equation*}

Assume the prior \(\pi(K)\) is supported on \(\{1, \dots, K_n\}\) with \(K_n = o(n)\), i.e., \(\pi(K) = 0\) for all \(K > K_n\). Then Theorem~\ref{thm:BIC} implies that each exponential term in the sum vanishes as \(n \to \infty\), provided the prior ratio \(\pi(K)/\pi(K_0)\) is not too large. This condition is satisfied, for instance, by a uniform prior over \(\{1, \dots, K_n\}\), or by an exponential prior \(\pi(K) \propto \exp(-\alpha K)\) for some fixed \(\alpha > 0\). Hence, even when \(p > n\), the posterior is consistent:
\[
\pi(K = K_0 \mid \bbX_n) \to 1 \quad \text{as } n \to \infty.
\]

\begin{remark}
\cite{bai2018consistency} study the consistency of BIC-type estimators for the number of significant components in high-dimensional PCA. Theorem 3.2 in \cite{bai2018consistency} establishes consistency under the classical BIC only when \( p/n < 1 \). To address the high-dimensional case \( p > n \), they propose a modified criterion called quasi-BIC (qBIC).  While qBIC achieves consistency in the high-dimensional regime, it does not directly approximate the marginal likelihood, which limits its interpretability as Bayesian model evidence. 
In particular, approximating the posterior distribution of \(K\), $\pi(K\mid \bbX_n)$, requires a marginal likelihood approximation such as BIC, rather than qBIC. 
Our result therefore provides the necessary theoretical foundation for Bayesian inference on the spike number in spiked covariance models under high-dimensional asymptotics, and constitutes a contribution toward fully Bayesian model selection in the regime of $p>n$.
\end{remark}

\section{Numerical studies}\label{sec:numerical}

%In this section, we conduct two simulation studies to evaluate the proposed methods: the bias correction methods for posterior eigenvalues introduced in Section~\ref{ssec:biascorrect}, and the Bayesian approach for estimating the number of spikes. A simulation study evaluating the performance of eigenvector estimation based on the inverse-Wishart posterior is provided in the supplementary materials, as our main focus here is on the newly proposed methods. All experiments focus on the spiked eigenstructure of high-dimensional covariance matrices. Finally, we apply the proposed method to the S\&P 500 dataset as a real-data example of high-dimensional covariance estimation.

We examine the performance of the proposed Bayesian methods through simulation studies and a real-data analysis of the S\&P~500 dataset.
The simulation studies investigate two main tasks: (i) estimation of spiked eigenvalues, evaluating both accuracy and computational efficiency; and (ii) estimation of the number of spikes, comparing the proposed approach with several existing methods. In particular, we illustrate the benefits of the Bayesian approach for spiked covariance estimation using the S\&P 500 data analysis. In this data analysis, our interest lies in estimating functionals of eigenvalues, such as the absorption ratio, which is a systemic risk measure that depends on multiple spiked eigenvalues as well as the number of spikes. Since uncertainty arises from both the eigenvalues and the number of spikes, it is necessary to account for them jointly. The Bayesian framework is well suited for this purpose, as it provides a coherent quantification of uncertainty for the eigenvalues as well as for functionals derived from them. Although the proposed model is capable of estimating eigenvectors, this is not the primary focus of this paper. Therefore, we omit detailed discussion in this section and present the corresponding results in the Appendix \ref{sec:addsims}. 

%Specifically, we examine the estimation of eigenvalues, as well as the estimation of the number of spikes in the covariance matrix. Although the proposed model is capable of estimating eigenvectors, this is not the primary focus of this paper. Therefore, we omit detailed discussion in this section and present the corresponding results in the supplementary materials. In all experiments, we focus on the spiked eigenstructure of the high-dimensional covariance matrix. Lastly, we analyze the S\&P 500 dataset as an example of high-dimensional data using the proposed method. 
% The code for the proposed method is publicly available at  \url{https://github.com/swpark0413/bisec}.

\subsection{Estimation of eigenvalues}\label{subsec:est_eigenval}
%We estimate the eigenvalues and eigenvectors of a spiked covariance matrix using the proposed method presented in Section \ref{ssec:bayeseigen}. 

We conduct a simulation study to evaluate the bias correction methods for posterior eigenvalues proposed in Section \ref{ssec:biascorrect}.
Subsequently, we evaluate the accuracy and uncertainty quantification of these eigenvalues using 100 replicated data sets. Specifically, we compute the relative errors for the leading $k$-th eigenvalue, denoted as $\text{err}_\lambda := |\lambda_k(\bm{\Sigma})-\lambda_k(\bm\Sigma_0)|/\lambda_k(\bm\Sigma_0)$, where $\lambda_k(\bm{\Sigma})$ represents the estimated leading $k$-th eigenvalue of the covariance. % For the eigenvector accuracy, we measure $\text{err}_{\bm\xi} := 1-(\bm\xi_k(\bm\Sigma)^T \bm\xi_{k}(\bm\Sigma_0))^2$. %, where $\bm\xi_k$ denotes a point estimate of leading $k$-th eigenvector from the covariance. 
The point estimates of the Bayesian method are given as the average of the $500$ posterior samples. % The posterior mean for eigenvectors, which are direction vectors, is calculated using the Grassmann average method \citep{hauberg2014grassmann}.
As the interval estimation, we use the 95\% credible interval of posteriors for uncertainty estimation, since the frequentist methods are challenging to apply in this setting. The coverage probability (CP) is measured by determining how often the true parameters of interest fall within credible intervals (or confidence intervals)  across 100 replicates.  % In this paper, we focus on the coverage probability of eigenvalues only. Although our proposed method can provide interval estimation for eigenvectors as well, we omit the coverage probability for the elements of eigenvectors due to the difficulty of interpretation resulting from their large number.

We consider two high-dimensional settings for evaluating the proposed methods. In both settings, the number of spikes is fixed at \(K = 3\), and we examine combinations of the sample size \(n \in \{100, 500\}\) and the dimension \(p \in \{500, 1000\}\). In the first setting, synthetic datasets are generated from a multivariate normal distribution \(N_p(\mathbf{0}, \bm{\Sigma}_0)\),  where the true spiked covariance matrix is defined as $\bm{\Sigma}_0 = \mathrm{diag}(150, 100, 50, 1, \ldots, 1)$. Here, the spike strengths are determined so that the spiked eigenstructure of the covariance satisfies the conditions~\eqref{eq:spikecondition} and~\eqref{eq:seperatedeigens}. In particular, when \( n = 100 \) and \( p = 1000 \), the values of $d_1, d_2$, and $d_3$ are \( 0.0667 \), \( 0.1 \), and \( 0.2 \) respectively, satisfying the spiked eigenvalue condition.

The second setting follows \citet{wang2017asymptotics} and is based on the factor model \eqref{eq:fm}, where \(\bm{\Sigma}_0 = \mathbf{B}\mathbf{B}^\top + \bm{\Sigma}_u\). The loading matrix \(\mathbf{B}\) has rows sampled from a standard multivariate normal distribution, with the \( k \)-th column normalized such that  \( \lambda_k \) for \( k = 1, 2, 3 \). We set \( \lambda_1 = 50 \), \( \lambda_2 = 20 \), and \( \lambda_3 = 10 \). The idiosyncratic covariance matrix is diagonal, \(\bm{\Sigma}_u = \mathrm{diag}(\sigma_1^2, \ldots, \sigma_p^2)\), where each \(\sigma_i\) is independently drawn from a \(\mathrm{Gamma}(a, b)\) distribution with \(a = 150\) and \(b = 100\). In both settings, the number of factors \(K\) is assumed to be known, and this assumption is imposed across all competing methods. 
The inverse-Wishart prior is specified with hyperparameters \(\bm{A}_n = 0.1 \times I_p\) and \(\nu_n = 2p + 2\). 

We compared our proposed estimators using prior calibration (IW-PC) and post-hoc correction (IW-PHC) against the sample covariance (SC) as a reference estimator, as well as three additional approaches: the inverse-Wishart posterior without bias correction, having degrees of freedom $ n + 2p + 2$ (IW), shrinkage principal orthogonal complement
thresholding (SPOET) introduced by \cite{wang2017asymptotics}  and the posterior using the shrinkage inverse-Wishart (SIW) prior proposed by \cite{berger2020bayesian}. For SPOET, we utilized the implementations provided in the \pkg{POET} R package.

\begin{table}[t!]
\begin{center}
\caption{Average relative errors and coverage probabilities (CP) of the estimated eigenvalues over 100 replications under the first setting. NA indicates that the value is not available.}
{\scriptsize
\begin{tabular}{cccccccccccccccc}
\toprule\noalign{}
 & & & \multicolumn{2}{c}{SC} & \multicolumn{2}{c}{IW} &  \multicolumn{2}{c}{SPOET} & \multicolumn{2}{c}{SIW} & \multicolumn{2}{c}{IW-PHC} & \multicolumn{2}{c}{IW-PC} \\
\cmidrule(lr){4-5} \cmidrule(lr){6-7} \cmidrule(lr){8-9} \cmidrule(lr){10-11} \cmidrule(lr){12-13}  \cmidrule(lr){14-15} 
 & $n$ & $p$ & $\text{Err}_\lambda$ & CP & $\text{Err}_\lambda$ & CP & $\text{Err}_\lambda$ & CP  & $\text{Err}_\lambda$ & CP & $\text{Err}_\lambda$ & CP  & $\text{Err}_\lambda$ & CP\\
\midrule\noalign{}
\multirow{4}{*}{$\lambda_1$} & 100 & 500 &    0.1212 & NA & 0.1432 & 0.89 & 0.1122 & 0.92 & 0.1115 & 0.92 & 0.1120 & 0.90 & 0.1120 & 0.90 \\ 
         & 100 & 1000   & 0.1237 & NA & 0.1926 & 0.80 & 0.0947 & 0.98 & 0.0922 & 0.94 & 0.0972 & 0.96 & 0.0963 & 0.96 \\ 
       
         % & 100 & 1500   &   0.1507 & NA & 0.27468 & 0.5500 & 0.1507 &  NA & 0.1045 &  0.9500 & 0.1117 & 0.9300 & 0.1096 & 0.9200  \\ 
 & 500 & 500 &0.0512 & NA & 0.0511 & 0.93 & 0.0512 & 0.96 & 0.0560 & 0.93 & 0.0509 & 0.93 & 0.0509 & 0.93 \\ 
         & 500 & 1000   &  0.0478 & NA & 0.0526 & 0.94 & 0.0473 & 0.98 & 0.0507 & 0.95 & 0.0482 & 0.94 & 0.0477 & 0.94 \\ 
          
         % & 500 & 1500   &  0.0557 & NA & 0.0643 & 0.8800 & 0.0557 & NA & 0.0519 & 0.9700 & 0.0522 & 0.9400 & 0.0519 & 0.9500 \\ 
\addlinespace
\multirow{4}{*}{$\lambda_2$} & 100 & 500 &0.1156 & NA & 0.1046 & 0.95 & 0.1071 & 0.94 & 0.1112 & 0.91 & 0.1006 & 0.88 & 0.0993 & 0.91 \\ 
         & 100 & 1000   &0.1113 & NA & 0.1580 & 0.84 & 0.0934 & 1.00 & 0.0974 & 0.95 & 0.0865 & 0.96 & 0.0877 & 0.95 \\ 
         % & 100 & 1500   &   0.1453 & NA & 0.2768 & 0.4700 & 0.1453 & NA & 0.1000 & 0.9900 & 0.0823 & 0.9500 & 0.0863 & 0.9000 \\ 
 & 500 & 500 &  0.0491 & NA & 0.0481 & 0.94 & 0.0493 & 0.96 & 0.0514 & 0.93 & 0.0487 & 0.93 & 0.0492 & 0.91 \\ 
         & 500 & 1000   &0.0462 & NA & 0.0463 & 0.96 & 0.0492 & 0.99 & 0.0526 & 0.96 & 0.0497 & 0.95 & 0.0502 & 0.95 \\ 
         % & 500 & 1500   &  0.0462 & NA & 0.0570 & 0.8900 & 0.0462 & NA & 0.0449 & 0.9800 & 0.0447 & 0.9500 & 0.0445 & 0.9300 \\ 
\addlinespace
\multirow{4}{*}{$\lambda_3$} & 100 & 500 & 0.1211 & NA & 0.1507 & 0.88 & 0.1064 & 0.97 & 0.1115 & 0.95 & 0.1039 & 0.95 & 0.1068 & 0.90 \\ 
          & 100  & 1000   & 0.1699 & NA & 0.3792 & 0.06 & 0.1038 & 1.00 & 0.1178 & 0.92 & 0.0853 & 0.95 & 0.0958 & 0.88 \\ 
          % & 100  & 1500   &   0.2900 & NA & 0.7301 & 0.0000 & 0.2900 & NA & 0.1227 & 0.9600 & 0.1003 & 0.8700 & 0.1036 & 0.8000 \\ 
 & 500 & 500 & 0.0548 & NA & 0.0570 & 0.90 & 0.0527 & 0.94 & 0.0532 & 0.93 & 0.0519 & 0.94 & 0.0520 & 0.91 \\ 
         & 500 & 1000   & 0.0565 & NA & 0.0728 & 0.81 & 0.0518 & 0.94 & 0.0539 & 0.91 & 0.0522 & 0.91 & 0.0514 & 0.90 \\ 
         % & 500 & 1500   &  0.0669 & NA & 0.1140 & 0.6200 & 0.0669 & NA & 0.0538 & 0.9300 & 0.0537 & 0.9300 & 0.0544 & 0.9400 \\ 
\bottomrule\noalign{}
\end{tabular}
}
\label{tab:eigval}
\end{center}
\end{table}

 Table \ref{tab:eigval} presents the results for the first setting (the second setting is reported in the Appendix \ref{sec:addsims}). For $n = 100$,  the proposed IW-PC and IW-PHC estimators offer superior estimation accuracy and reliable uncertainty quantification compared to the standard approaches SC and IW. These methods significantly reduce the bias inherent in the inverse-Wishart posterior, particularly under high-dimensional settings, as shown in Figure \ref{fig:bias_evals}. IW-PHC achieves the lowest or near-lowest estimation errors across all eigenvalues, outperforming other methods for $\lambda_2$ and $\lambda_3$. In contrast, the standard IW estimator exhibits substantial bias and poor coverage performance as the dimension increases. The SIW method shows overall comparable performance in estimation accuracy and coverage. For $n = 100$, it provides the most accurate estimate of the leading eigenvalue $\lambda_1$, but performs slightly worse than IW-PHC and IW-PC for $\lambda_2$ and $\lambda_3$. Although SPOET demonstrates strong performance in point estimation, its uncertainty quantification tends to be less reliable when  $p = 1000$. % or $1500$.  
This could be attributed to its reliance on an asymptotic normal approximation, which may result in conservative or miscalibrated confidence intervals when the sample size is finite. Specifically, the confidence intervals for SPOET are derived from the following asymptotic distribution:
\begin{equation*}
    \sqrt{n}\left(\frac{\hat{\lambda}_i^{S}}{\lambda_{0,i}} - 1\right) \overset{d}{\longrightarrow} N(0, 2), \quad \text{provided that } \sqrt{p} = o(\lambda_{0,i}),
\end{equation*}
where $\hat{\lambda}_i^S$ denotes the shrinkage eigenvalue. For further details, see \cite{wang2017asymptotics}.

\begin{figure}
\centering
\includegraphics[width=\textwidth]{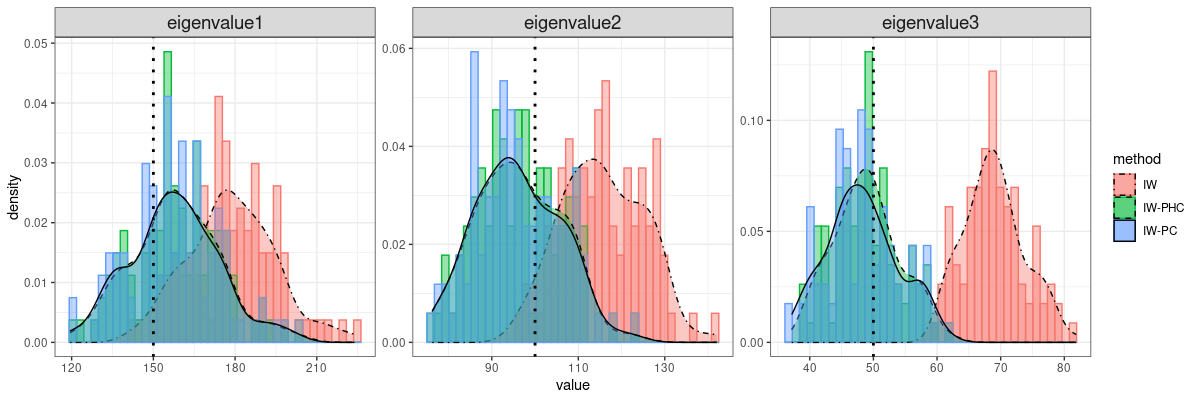}
\caption{\label{fig:bias_evals} Histograms and density plots of posterior means for the top three eigenvalues (\(\lambda_1 = 150\), \(\lambda_2 = 100\), \(\lambda_3 = 50\)) across 100 simulated datasets with \(p = 1000\). Vertical dotted lines indicate the true values. The density curves are shown with dot-dashed (IW), dashed (IW-PHC), and solid (IW-PC) lines.}
\end{figure}

When the sample size increases to $n = 500$, all methods demonstrate comparable performance in both estimation accuracy and uncertainty quantification across all eigenvalues. The relative errors are significantly reduced, and the coverage probabilities close to the nominal level, even for the IW estimator in some cases, which previously underperformed when $n = 100$. This indicates that the advantages of the proposed IW-PHC and IW-PC estimators are less noticeable as the sample size grows, likely due to the mitigation of high-dimensionality effects with more observations. Nevertheless, IW-PHC and IW-PC continue to exhibit competitive accuracy and coverage across all eigenvalues and dimensions.

Furthermore, we compare the average computation times of SIW and IW-PHC over 100 repetitions to assess the computational efficiency of the two Bayesian methods. For comparison, we ensure that the effective sample size of the leading eigenvalue is approximately 500. While the IW-PHC method produces 500 independent posterior samples, the SIW method relies on MCMC sampling to achieve a comparable number of effective samples. Specifically, we set the number of iterations to 60{,}000 for both \(p = 500\) and  \(p = 1000\) based on the convergence diagnostic. Table~\ref{tab:ess_time} shows that IW-PHC achieves both high efficiency (high ESS) and short computation time, being up to 3--7 times faster than SIW in estimating the eigenvalues and eigenvectors, whereas SIW attains high ESS for some leading eigenvalues (e.g. $\lambda_1$) but its efficiency deteriorates substantially as the dimension increases and for lower-order eigenvalues.
 These results suggest that the proposed method is more computationally efficient than SIW in high-dimensional settings and is readily applicable to real-world data. The computational advantage arises from the fact that the proposed method generates posterior samples independently, whereas SIW relies on MCMC sampling, which requires a large number of iterations to obtain the desired number of effective samples in high dimensions. In particular, since the proposed method generates posterior samples independently, it is amenable to parallel sampling. This feature is advantageous in high-dimensional settings, where substantial improvements in computational speed can be expected.

\begin{table}[t!]
\centering
\caption{Average effective sample size (ESS) and computation time (TIME, in seconds) for the Bayesian methods under the setting \(n = 100\). Each value is averaged over 100 repetitions.}
{\footnotesize
\begin{tabular}{cccccccc}
\toprule
 & & \multicolumn{3}{c}{\(p = 500\)} & \multicolumn{3}{c}{\(p = 1000\)} \\
\cmidrule(lr){3-5} \cmidrule(lr){6-8}
& & \(\lambda_1\) & \(\lambda_2\) & \(\lambda_3\) & \(\lambda_1\) & \(\lambda_2\) & \(\lambda_3\) \\
\midrule
\multirow{2}{*}{SIW} & ESS  & 555&  208 & 242 & 237 &113  &  98\\
                     & TIME &     & 803 &     &     &  3542&     \\
\midrule
\multirow{2}{*}{IW-PHC} & ESS  & 500 & 500 & 500 & 500 & 500 & 500 \\
                        & TIME &     & 121  &     &     & 1002  &     \\
\bottomrule
\end{tabular}
\label{tab:ess_time}
}
\end{table}

\subsection{Estimation of the number of spikes}
This subsection investigates how well the number of spikes can be estimated in a high-dimensional setting. We conduct experiments for selecting $K$ under the two spiked covariance structure settings introduced in the previous subsection.
Although the proposed method provides uncertainty quantification for \( K \), we focus on point estimation in this simulation study to enable comparison with existing methods, which do not quantify uncertainty. The practical utility of uncertainty quantification for \( K \) will be illustrated in the real data application in the next subsection.

We compare four existing methods for estimating the number of spikes $K$: the approach of \cite{ke2023estimation}, referred to as BEMA0; the information criterion proposed by \cite{bai2002determining}, denoted as $\text{IC}_{p2}$; the Bayesian model selection introduced by \cite{minka2000automatic}, termed ACPCA; and the eigenvalue ratio test (growth ratio) of \cite{ahn2013eigenvalue}, denoted as GR. Similar to the previous experiment, each setting is repeated 100 times. We report two evaluation metrics: the accuracy (ACC), defined as the proportion of replications where the number of spikes is correctly estimated, and average (AVG), which denotes the mean of the estimated number of spikes across replications.

\begin{table}[t!]
\begin{center}
\caption{Average number of estimated spikes (AVG) and accuracy (ACC) of spike number estimation across 100 replications for each method under the first and second spiked covariance structure settings.}
{\scriptsize
\begin{tabular}{cccccccccccccc}
\toprule\noalign{}
 & & & \multicolumn{2}{c}{BEMA0} & \multicolumn{2}{c}{$\text{IC}_{p2}$} & \multicolumn{2}{c}{GR} & \multicolumn{2}{c}{ACPCA} & \multicolumn{2}{c}{Proposed Method}  \\
\cmidrule(lr){4-5} \cmidrule(lr){6-7} \cmidrule(lr){8-9} \cmidrule(lr){10-11} \cmidrule(lr){12-13}
 & $n$ & $p$ & AVG & ACC &  AVG & ACC &  AVG & ACC &  AVG & ACC  &  AVG & ACC\\
\midrule\noalign{}
\multirow{4}{*}{Setting 1} & 100 & 500 &   3.00 & 1.00  & 3.00 & 1.00 &3.00 & 1.00 & 1.00 & 0.00 & 3.00 & 1.00  \\ 
         & 100 & 1000   &   3.01 & 0.99 &  2.91 & 0.91  & 3.00 & 1.00 & 1.00 & 0.00 & 2.89 & 0.88 \\
       
         % & 100 & 1500   &   3.00 & 1.00 & 2.96 & 0.96 & 2.11 & 0.11 & 1.00 & 0.00 & 2.08 & 0.08 \\
 & 500 & 500 &   3.00 & 1.00   & 3.00 &1.00 &3.00 &1.00 & 3.00 &1.00& 3.00 & 1.00 \\
         & 500 & 1000   &   3.00 & 1.00  & 3.00 &1.00 & 3.00 &1.00  & 3.00 &1.00 & 3.00 & 1.00 \\
          
         % & 500 & 1500   &   3.01 & 0.99 & 3.00 & 1.00 & 3.00 &1.00 & 1.00 & 0.00 & 3.00 & 1.00 \\
\addlinespace
\multirow{4}{*}{Setting 2} & 100 & 500 &  3.01 & 0.99  & 3.00 & 1.00 & 2.49 & 0.6 & 3.00 & 1.00 & 3.00 & 1.00 \\ 
         & 100 & 1000   &   3.00 & 1.00 & 2.74 & 0.74 & 1.49 & 0.06 & 3.00 & 1.00 & 2.78 & 0.78 \\
       
         % & 100 & 1500   &   3.04 & 0.96 & 3.00 & 1.00 & 6.76 & 0.03 & 3.00 & 1.00 & 3.00 & 1.00 \\
 & 500 & 500 &   3.58 &0.45  &3.00 &1.00 &3.00 &1.00 &3.00 &1.00 & 3.00 & 1.00 \\
         & 500 & 1000   & 3.44 &0.59 & 3.00 &1.00&3.00 &1.00  & 3.00 &1.00& 3.00 & 1.00 \\
          
         % & 500 & 1500   &   3.85 & 0.28 & 3.00 & 1.00 & 5.06 & 0.12 & 3.00 & 1.00 & 3.00 & 1.00 \\     
\bottomrule\noalign{}
\end{tabular}
}
\label{tab:ms}
\end{center}
\end{table}

Table~\ref{tab:ms} shows that the proposed method and $\text{IC}_{p2}$ exhibit high accuracy across most scenarios, whereas GR and BEMA0 display sensitivity to the sample size and the underlying spike structure. The proposed method generally demonstrates stable performance, though it slightly underestimates the spike number in the challenging $n=100$, $p=1000$ case. Specifically, for the first setting, most methods perform perfectly when $n$ is large, with ACC values close to 1.00, whereas ACPCA fails completely regardless of $p$ in the $n=100$ setting. For the second setting, performance differences are more pronounced: GR severely underestimates the spike number when $n$ is small, particularly for $p=1000$, whereas BEMA0 tends to overestimate when $n$ is large. Overall, these results confirm that the proposed method maintains high accuracy and stability across various settings, including challenging high-dimensional scenarios.

\subsection{Real data analysis}

In this subsection, we evaluate the performance of the proposed model using real financial data by analyzing the \textit{Absorption Ratio} (AR), a widely used measure of systemic risk in economics and finance. The Absorption Ratio, introduced by \cite{kritzman2010principal}, is defined as the fraction of total variance explained by the top $K$ eigenvalues of the covariance matrix:
\(
\mathrm{AR}(K) = {\sum_{k=1}^K \hat{\lambda}_k}/{\sum_{k=1}^p \hat{\lambda}_k},
\)
where $\hat{\lambda}_1 \geq \hat{\lambda}_2 \geq \cdots \geq \hat{\lambda}_p$ denote the estimated eigenvalues of the asset return covariance matrix.  A higher absorption ratio indicates that the market is more tightly connected and therefore more vulnerable to external shocks, whereas a lower absorption ratio suggests that the market is more dispersed and thus more resilient to disturbances.

We compute the absorption ratio based on the eigenvalues estimated by the proposed model (IW-PHC). Specifically, we use the posterior mean of the eigenvalues to calculate $\mathrm{AR}(K)$, where the number of factors $K$ is chosen as the maximum a posteriori (MAP) estimate from its posterior distribution. For comparison, we also compute the absorption ratios based on the inverse-Wishart posterior without bias correction (IW).

We collect monthly adjusted closing prices from January 1999 to December 2023 and focus on four sectors in the S\&P 500 index: \textit{Information Technology}, \textit{Financials}, \textit{Health Care}, and \textit{Energy}. To analyze the temporal dynamics of market co-movement, we first compute log returns based on the adjusted closing prices. Then, using a 12-month sliding window that shifts forward by one month at a time, we calculate the average absorption ratio within each window, thereby obtaining a time series that tracks the evolution of systemic risk throughout the study period.
\begin{figure}[ht!]
     \centering
       \includegraphics[width=\textwidth]{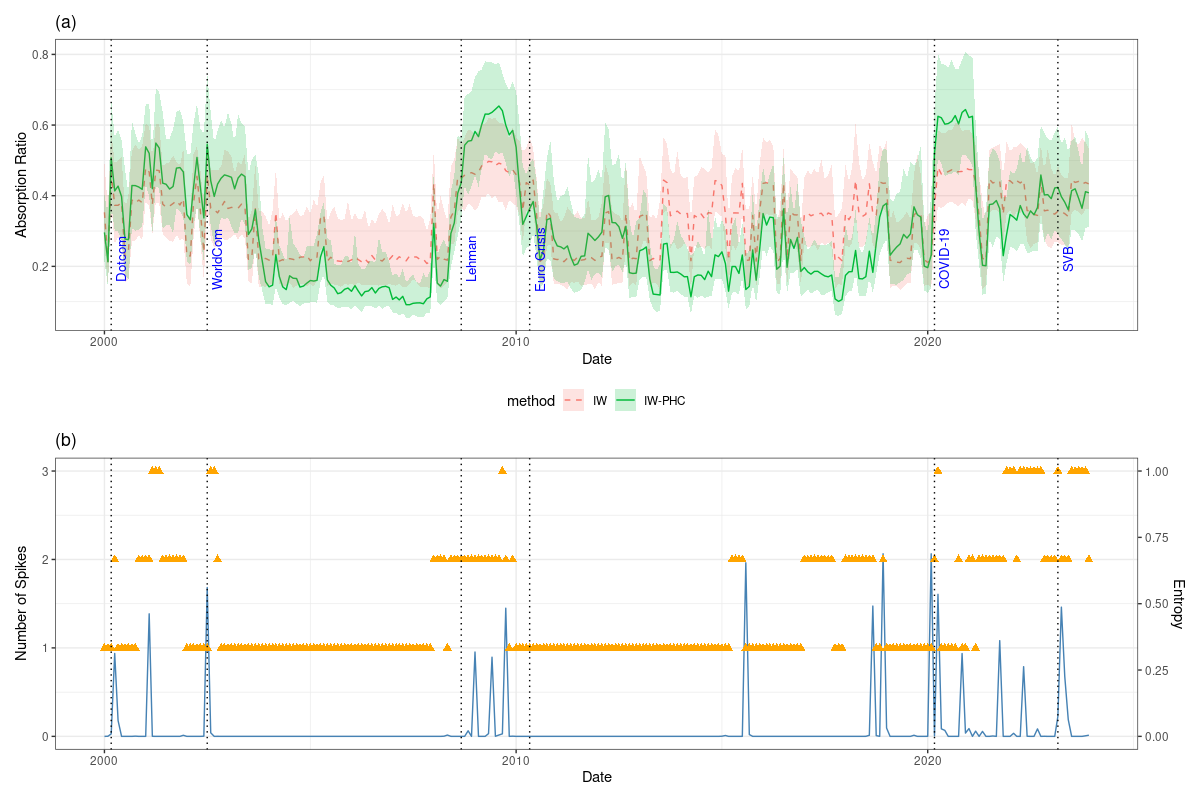}
       \caption{ (a) The absorption ratio (AR) of log returns, derived from the adjusted closing prices of the S\&P 500 over time. The dashed line represents the IW estimator, while the solid line denotes the bias-corrected IW-PHC estimator. Shaded regions indicate the 95\% credible intervals. (b) The estimated number of spikes (triangles, left axis) is shown alongside the posterior entropy (solid line, right axis). Vertical dotted lines indicate major U.S. financial events.}  
        \label{fig:real_results1}
\end{figure}

In Figure~\ref{fig:real_results1}(a), the IW method generally produces higher AR values during non-crisis periods, reflecting the eigenvalue inflation in high-dimensional settings. However, during major events---such as the dot-com bubble, the Lehman collapse, the European debt crisis, and the COVID-19 pandemic---the proposed IW-PHC method yields larger AR values than IW, capturing the heightened systemic risk more prominently in these turbulent periods. Figure~\ref{fig:real_results1}(b) shows that the number of spikes increases around crisis periods (e.g., 2000, 2008, 2020), indicating structural changes in the covariance structure of S\&P 500 stock returns. During stable periods, the number of spikes remains close to one. The entropy, derived from the posterior distribution of the number of spikes, quantifies the uncertainty in spike estimation: higher entropy reflects greater uncertainty, whereas lower entropy indicates more confident and stable inference. While entropy remains low throughout non-crisis periods, it rises sharply during financial crises, reflecting increased uncertainty in estimating the number of spikes due to structural shifts in the market under heightened systemic risk.

\section{Concluding remarks}\label{sec:concludingremark}

In this study, we developed a Bayesian framework for spiked covariance models, focusing on the posterior distribution of the spiked eigenvalues, eigenvectors, and the number of spikes \(K\).  
We employed the inverse-Wishart prior to derive the posterior distribution of the spiked eigenvalues and eigenvectors.  
However, since the eigenvalues from the inverse-Wishart posterior exhibit inflation in high-dimensional settings, we proposed two complementary bias-correction strategies: a prior calibration method that tunes the degrees-of-freedom parameter and a post-hoc correction method.  
We also introduced a Bayesian procedure for estimating \(K\) based on a BIC approximation. These approaches are theoretically supported by our eigenvalue perturbation analysis and posterior contraction results.  

A main advantage of our framework is that it enables the estimation of various functionals of the eigenvalues along  with uncertainty quantification, providing reliable inference beyond point estimation.  Another key computational advantage is that it generates independent posterior samples without relying on iterative MCMC algorithms. This independence allows for accurate posterior summaries using a relatively small number of samples, leading to a significant reduction in computational cost, especially in high-dimensional regimes. An interesting future direction is to extend the proposed methods to other structured covariance models, such as factor models or dynamic covariance structures.

\bibliographystyle{dcu}
\bibliography{main.bib}

\newpage

\appendix
\section{Concentration of eigenvalues under low-dimensional case}\label{ssec:eigenvalueineq}

%두 matrix의 eigenvalue ineqaulity를 제시한다. for the matrix with seperated eigenvalues.
%특히, eigenvalue의 크기가 largely sepreated된 경우에는 Weyl's inequality보다 improve된 결과를 준다.
%예를들어 $\bm\Sigma_0 = \textup{diag}(\lambda_1,\ldots,\lambda_p)$ and $\bm\Simga = \textup{diag}(\lambda_1(1+\epsilon),\ldots, \lambda_p(1+\epsilon))$
%\bea 
%\frac{|\lambda_k(\bm\Sigma) - \lambda_k(\bm\Sigma_0)|}{\lambda_k} \le \frac{||\bm\Sigma-\bm\Sigma_0||_2}{\lambda_k} = \frac{\lambda_1}{\lambda_k}\epsilon
%\eea 

%\bea 
%\frac{|\lambda_k(\bm\Sigma) - \lambda_k(\bm\Sigma_0)|}{\lambda_k(\bm\Sigma_0)}  \le C\epsilon.
%\eea 

%corollary로서 sample covarinace의 이것의 수렴속도가 p/n이 됨을 알 수 있다. (여기선 p^4 =o(n))인 경우에 대해 수렴성을 보였는데, improved된 결과를 얻을 수있음.
%마찬가지로 p=o(n)일 경우에 IW posterior의 eigenvalue의 수렴성도 증명할 수있음. 

In this section, we provide Theorem \ref{thm:loweigen} for the eigenvalue concentration inequality when $p||\bm\Sigma_0^{-1/2}\bm\Sigma\bm\Sigma_0^{-1/2} -\bm{I}_p||_2$ is small enough. 
Theorem \ref{thm:loweigen} is used for the proof of Theorem \ref{thm:higheigen} in the manuscript.
%under the spiked covariance matrix when $p=o(n)$. Due to the condition on $||\bm\Sigma_0^{-1/2}\bm\Sigma\bm\Sigma_0^{-1/2} -\bm{I}_p||_2$, the theorem is satisfied 
%Compared to Weyl's inequality, this theorem is more tighter when the eigenvalues of the covariances are largely seperated. For example, suppose $\bm\Sigma_0 = \textup{diag}(\lambda_1,\ldots,\lambda_p)$ and $\bm\Sigma = \textup{diag}(\lambda_1(1+\epsilon),\ldots, \lambda_p(1+\epsilon))$. Then, while Weyl's inequality gives
%\bea 
%\frac{|\lambda_k(\bm\Sigma) - \lambda_k(\bm\Sigma_0)|}{\lambda_k(\bm\Sigma_0)} \le \frac{||\bm\Sigma-\bm\Sigma_0||_2}{\lambda_k(\bm\Sigma_0)} = \frac{\lambda_1}{\lambda_k}\epsilon,
%\eea 
%Theorem \ref{thm:loweigen} gives 
%\bea 
%\frac{|\lambda_k(\bm\Sigma) - \lambda_k(\bm\Sigma_0)|}{\lambda_k(\bm\Sigma_0)} \le C\epsilon,
%\eea 
%for some positive constant. Thus, the inequality by Theorem \ref{thm:loweigen} is tighter when $\lambda_1 >> \lambda_k$.
\begin{theorem}\label{thm:loweigen}

    Let $\lambda_1\ge \ldots \ge \lambda_p >0$ be the eigenvalues of $\bm\Sigma$ and let $d_1\ge \ldots \ge d_p>0$ be the eigenvalues of $\bm\Sigma_0$ with $ \min_{l=1,\ldots,p-1 }\frac{d_{l}}{d_{l+1}} >c $ for some constant $c>1$.
%If $p||\bm\Sigma_0^{-1/2}\bm\Sigma\bm\Sigma_0^{-1/2}-\bm{I}||_2 < \frac{c-1}{2c\{ (12\wedge 8/c_1) +1 \}}$, where $c_1 = \frac{c-1}{2} \wedge\frac{c-1}{c} \Big( 1- \frac{1}{2c}\Big)$,
    If $p||\bm\Sigma_0^{-1/2}\bm\Sigma\bm\Sigma_0^{-1/2} -\bm{I}_p||_2 < \delta$ for some positive constant $\delta$ dependent on $c$, then
    \bea 
    \sup_{k=1,\ldots,p} \left|\frac{\lambda_k }{d_k} -1 \right| \le Cp ||\bm\Sigma_0^{-1/2}\bm\Sigma\bm\Sigma_0^{-1/2} - \bm{I}_p||_2,\\
    \sup_{k=1,\ldots,p} \left|\frac{\sqrt{\lambda_k} }{\sqrt{d_k}} -1 \right| \le C p ||\bm\Sigma_0^{-1/2}\bm\Sigma\bm\Sigma_0^{-1/2} - \bm{I}_p||_2,
    \eea     
    for some positive constant $C$ dependent on $c$.
\end{theorem}

The proof of Theorem \ref{thm:loweigen} is given below with the following lemma.

\begin{lemma}\label{lemma:dratio}
    Suppose $d_1 \ge d_2 \ge \ldots \ge d_p \ge 0$,  $\lambda >0$ and $ 1 \leq k \leq p$. If $ \min_{l=1,\ldots,p-1 }\frac{d_{l}}{d_{l+1}} >c $ %\footnote{JL: $ \min_{k=1,\ldots,p-1 }\frac{d_{k}}{d_{k+1}} >c $이었는데, 레마에서 $k$는 고정된 상수로 사용되기 때문에 $l$로 바꾸었다. } 
    and $|\lambda/d_k -1 | < \frac{c-1}{2c}$ with $c>1$, 
    then 
    \bea 
    \lambda /d_{j} -1 &\ge& \frac{c-1}{2}, ~\textup{when } j>k,\\
    1- \lambda /d_{j}  &\ge&  \frac{c-1}{c} \Big( 1- \frac{1}{2c}\Big) , ~\textup{when } j<k.
    \eea 
    
\end{lemma}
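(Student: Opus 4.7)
The plan is to unpack the two-sided bound $|\lambda/d_k - 1| < (c-1)/(2c)$ into explicit upper and lower bounds on $\lambda/d_k$, then transport these to $\lambda/d_j$ for $j \neq k$ using the product-of-ratios telescoping implied by the gap condition $d_l/d_{l+1} > c$.

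First I would rewrite the hypothesis as
\[
\frac{c+1}{2c} \;<\; \frac{\lambda}{d_k} \;<\; \frac{3c-1}{2c}.
\]
The separation assumption $d_l/d_{l+1} > c$ telescopes to $d_k/d_j > c^{j-k}$ whenever $j > k$ and to $d_k/d_j < c^{-(k-j)}$ whenever $j < k$. Since $c>1$, these in particular give $d_k/d_j > c$ for $j>k$ and $d_k/d_j < 1/c$ for $j<k$, which is all I need.

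For $j > k$, writing $\lambda/d_j = (\lambda/d_k)\cdot(d_k/d_j)$ and combining the lower bounds gives
\[
\frac{\lambda}{d_j} \;>\; \frac{c+1}{2c}\cdot c \;=\; \frac{c+1}{2},
\]
so $\lambda/d_j - 1 > (c-1)/2$, the first claim. For $j < k$, the analogous computation gives
\[
\frac{\lambda}{d_j} \;<\; \frac{3c-1}{2c}\cdot\frac{1}{c} \;=\; \frac{3c-1}{2c^2},
\]
and hence
\[
1-\frac{\lambda}{d_j} \;>\; \frac{2c^2 - (3c-1)}{2c^2} \;=\; \frac{(c-1)(2c-1)}{2c^2} \;=\; \frac{c-1}{c}\Bigl(1-\frac{1}{2c}\Bigr),
\]
which is exactly the second claim. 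I do not anticipate any real obstacle; the only point requiring minor care is verifying that the upper bound $(3c-1)/(2c^2)$ is strictly less than $1$ for $c>1$ (equivalent to $(2c-1)(c-1)>0$), which ensures the last inequality is nontrivial and the final factorization is valid.
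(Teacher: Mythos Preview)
Your proof is correct and follows essentially the same approach as the paper: both write $\lambda/d_j = (\lambda/d_k)\cdot(d_k/d_j)$, invoke $d_k/d_j > c$ for $j>k$ and $d_k/d_j < 1/c$ for $j<k$, and combine with the hypothesis bound on $\lambda/d_k$. The only cosmetic difference is that you first rewrite the hypothesis as explicit endpoints $(c+1)/(2c) < \lambda/d_k < (3c-1)/(2c)$, whereas the paper keeps the $1 \pm |\lambda/d_k - 1|$ form throughout; the arithmetic is identical.
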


\begin{proof}[Proof of Lemma \ref{lemma:dratio}]
    
    First, we consider $j>k$ which implies $d_k/d_j> c$. We have
    \bea 
    \lambda /d_{j} -1   &=& \frac{d_k}{ d_{j}} \left|\frac{\lambda}{d_k} \right| - 1 \\
    &\ge& c \left(1 -  \left|\frac{\lambda}{d_k} - 1 \right| \right) -1 \\
    &\ge& c\left(1-\frac{c-1}{2c} \right) -1 \\
    &=& \frac{c-1}{2}.
    \eea 
    Next, we consider $j<k$ which implies $d_j/d_k >c$. We have
    \bea 
    1-\lambda /d_{j}   &=& 1- \frac{\lambda}{d_{k}}\frac{d_k}{d_{j}}\\
    &\ge& 1-  \frac{1}{c} \left(1 + \left|\frac{\lambda}{d_{k}} -1\right| \right) \\
    &=& 1- \frac{1}{c} - \frac{1}{c}\left|\frac{\lambda}{d_{k}} -1 \right|\\
    &\ge& 1- \frac{1}{c} -\frac{c-1}{2c^2} \\
    &=& \frac{c-1}{c} \Big( 1- \frac{1}{2c}\Big) ,
    \eea %\footnote{JL: 마지막 부등식 성립하는 이유는?  }
    where the second inequality is satisfied by the given condition $|\lambda/d_k -1 |< (c-1)/(2c)$.
\end{proof}

\begin{lemma}\label{lemma:vineq}
Suppose the same setting and assumption on $d_1,\ldots, d_p$ and $\lambda$ in Lemma \ref{lemma:dratio}.
Let $\omega_{j,j}>0$ and $v_{j,j} = \omega_{j,j} -\lambda/d_{j} $, $j=1,\ldots, p$ and 
suppose $\max_{j=1,\ldots, p}|\omega_{j,j}-1 |\le c_1/2 $, where $c_1 = \frac{c-1}{2} \wedge\frac{c-1}{c} \Big( 1- \frac{1}{2c}\Big)$. 
Then,
    \bea
    \begin{cases}
        v_{j,j} \ge c_1/2, & j<k\\
        v_{j,j} \le -c_1/2, & j>k
    \end{cases}.
    \eea%n %\footnote{JL: 이 식을 레마로 따로 뽑는 것이 좋겠다. }   
\end{lemma}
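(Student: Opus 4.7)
The plan is to combine the bounds from Lemma \ref{lemma:dratio} on $\lambda/d_j$ with the given perturbation bound on $\omega_{j,j}$. Since $v_{j,j}$ is simply $\omega_{j,j} - \lambda/d_j$, and both quantities are now controlled, the conclusion should follow by two-sided arithmetic.

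First, I would invoke Lemma \ref{lemma:dratio} to obtain, under the assumption $|\lambda/d_k - 1| < (c-1)/(2c)$, the estimates $\lambda/d_j \ge 1 + (c-1)/2$ for $j > k$ and $\lambda/d_j \le 1 - \frac{c-1}{c}\bigl(1 - \frac{1}{2c}\bigr)$ for $j < k$. By definition of $c_1 = \frac{c-1}{2} \wedge \frac{c-1}{c}\bigl(1 - \frac{1}{2c}\bigr)$, these two inequalities simplify to $\lambda/d_j \ge 1 + c_1$ and $\lambda/d_j \le 1 - c_1$, respectively.

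Next, I would use the hypothesis $\max_j |\omega_{j,j} - 1| \le c_1/2$, which rephrases as $1 - c_1/2 \le \omega_{j,j} \le 1 + c_1/2$. Then for $j < k$, I compute $v_{j,j} = \omega_{j,j} - \lambda/d_j \ge (1 - c_1/2) - (1 - c_1) = c_1/2$, and for $j > k$, $v_{j,j} \le (1 + c_1/2) - (1 + c_1) = -c_1/2$. This gives the claimed bounds.

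There is no real obstacle here, since the lemma is essentially a bookkeeping corollary of Lemma \ref{lemma:dratio}. The only mild subtlety is verifying that $c_1$ is indeed chosen small enough that $c_1/2 \le c_1$ with enough slack (trivial) and that the definition of $c_1$ matches both lower bounds produced in Lemma \ref{lemma:dratio} simultaneously, which is immediate by taking the minimum.
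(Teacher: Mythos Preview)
Your proposal is correct and matches the paper's proof essentially line for line: both invoke Lemma~\ref{lemma:dratio} to get $1-\lambda/d_j\ge c_1$ for $j<k$ and $\lambda/d_j-1\ge c_1$ for $j>k$, then combine with $|\omega_{j,j}-1|\le c_1/2$ via a triangle-inequality (or equivalently, two-sided bound) argument. The only cosmetic difference is that the paper writes the step as $v_{j,j}\ge (1-\lambda/d_j)-|1-\omega_{j,j}|$ rather than using your explicit interval bounds on $\omega_{j,j}$.
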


\begin{proof}
First, we consider the case $j<k $.
    \bea 
    v_{j,j} &=& \omega_{j,j} -\lambda/d_{j}  \\
    &\ge& 1-\lambda/d_{j}   - |1-\omega_{j,j}| \\
    &\ge&  c_1- |1-\omega_{j,j}| \\
    &\ge& c_1/2,
    \eea %\footnote{JL: 아래서 두번째 부등식이 성립하기 위해서는 "for all sufficiently large $n$"이라는 말이 필요하다. 이 말은 정리의 결론에도 필요하다. }
    where the second inequality is satisfied by Lemma \ref{lemma:dratio}, and the third inequality is satisfied the assumption of $\omega_{j,j}$.
    The following inequality is also shown similarly when $j>k $.
    \bea 
    v_{j,j} &=& \omega_{j,j} -\lambda/d_{j}  \\
    &\le& 1-\lambda/d_{j}   + |1-\omega_{j,j}| \\
    &\le&  -c_1 +  |1-\omega_{j,j}|\\
    &\le& -c_1/2.
    \eea %\footnote{JL: 아래서 두번째 부등식이 성립하기 위해서는 "for all sufficiently large $n$"이라는 말이 필요하다.  }
\end{proof}

\begin{lemma}\label{lemma:chracterineq}
Suppose the same setting and assumption of Lemma \ref{lemma:vineq}.
        Let $\bm{V}= \bm\Omega - \lambda \bm{D}^{-1}  = [\bm{v}_1,\ldots, \bm{v}_p]$, where $\bm{v}_j = (v_{j,1},\ldots, v_{j,p})^T$, and let $\bm{J}_k = \textup{diag}( v_{1,1}^{-1},\ldots, v_{k-1,k-1}^{-1}, 1,v_{k+1,k+1}^{-1},v_{p,p}^{-1})$ and $\tilde\phi(\lambda )  = \textup{det}(\bm{V}\bm{J}_k)$.
If $p||\bm\Omega - \bm{I}_p||_2\le c_1/8$, then
    \bea 
|\tilde\phi(\lambda ) - (\omega_{k,k} -\lambda/d_k)|   \le (12\vee 8/c_1) p ||\bm\Omega - \bm{I}_p||_2,
    \eea 
    where $\omega_{k,k}$ is the $k$th diagonal element of $\bm\Omega$.

\end{lemma}

\begin{proof}

First, we consider the case when $v_{k,k}\ge0 $.
We have $w_{k,k} -\lambda/d_k = \textup{det} ( \textup{diag}(\bm{V}) \bm{J}_k)$ where 
$ \textup{diag}(\bm{V}) \bm{J}_k = \textup{diag}(1,\ldots, 1, v_{k,k},1,\ldots, 1)$. 
%Since $\textup{diag}(\bm{V})\bm{J}_k  = \textup{diag}(1,\ldots,1 , v_{k,k},1\ldots, 1)$, where $\textup{diag}(\bm{V}) = \textup{diag}(v_{1,1},\ldots ,v_{p,p})$,
We have 
    \bea%n\label{eq:tildephi1}
    |\tilde\phi(\lambda ) - (\omega_{k,k} -\lambda/d_k)|  &=& 
    |\textup{det}(\bm{V} \bm{J}_k) - \textup{det} ( \textup{diag}(\bm{V}) \bm{J}_k)  | .
    \eea%n %\footnote{JL: $\textup{det} ( \textup{diag}(\bm{V} \bm{J}_k))$ $\to$ $\textup{det} ( \textup{diag}(\bm{V}) \bm{J}_k)$}

        Let $\bm{E} = (\bm{V} - \textup{diag}(\bm{V}) )\bm{J}_k$. Since the off-diagonal elements of $\bm{V}$ and $\bm\Omega$ are equals, 
\bean \label{eq:Eineq}
||\bm{E} ||_2 &=&
||(\bm\Omega - \textup{diag}(\bm\Omega))\bm{J}_k ||_2 \nonumber\\
&\le&   ||\bm\Omega - \textup{diag}(\bm\Omega)||_2  ||\bm{J}_k||_2 \nonumber\\
&=&   ||\bm\Omega -\bm{I}_p - \textup{diag}(\bm\Omega-\bm{I}_p)||_2  ||\bm{J}_k||_2 \nonumber\\
&\le&   (||\bm\Omega -\bm{I}_p||_2 +||\textup{diag}(\bm\Omega-\bm{I}_p)||_2  )||\bm{J}_k||_2 \nonumber\\
&\le& 2||\bm\Omega -\bm{I}_p||_2 ||\bm{J}_k||_2 \nonumber\\
&\le& \frac{2||\bm\Omega -\bm{I}_p||_2}{ (\min_{j\neq k} |v_{j,j}|) \wedge 1 }\nonumber\\
&\le& \frac{4||\bm\Omega -\bm{I}_p||_2}{ c_1 },
\eean
where the third inequality is satisfied because $||\textup{diag}(\bm\Omega-\bm{I}_p)||_2$ is the absolute maximum value of the diagonal element of $\textup{diag}(\bm\Omega-\bm{I}_p)$, which is smaller than or equal to $||\bm\Omega-\bm{I}_p||_2$.
%\footnote{JL: 위에서 두번째 부등식은 왜 성립하나? 즉 $2||\bm\Omega -\bm{I}_p||_2 ||\bm{J}_k||_2 $를 어떻게 얻었나? \\ 세번째 부등식에서 $ |v_{j,j}|$에 절대값을 붙였다. }
%First, we give the upper bound of \eqref{main-eq:tildephi1} when $v_{k,k}\ge0 $.
%Since every element of $\textup{diag}(\bm{V})\bm{J}_k$ %\footnote{JL: $\textup{diag}(\bm{V})\bm{J}_k$ 아닌가? }
%is larger than or equal to $0$, we can apply the first-order bound in Remark 2.9 of \cite{ipsen2008perturbation}. 

Since $||\bm{E}||_2 <1/2$ by the condition of $||\bm\Omega -\bm{I}_p||_2$, %\footnote{JL: for all suffciently large $n$이라는 말이 필요하지 않나? } 
Corollary 2.7 in \cite{ipsen2008perturbation} gives
\bea 
|\textup{det}(\bm{V} \bm{J}_k) - \textup{det} ( \textup{diag}(\bm{V}) \bm{J}_k)  | \le \sum_{j=1}^p s_{p-j} ||\bm{E}||_2^j ,
\eea 
where $s_{p-j} = \sum_{1\le i_1< \ldots < i_{p-j} \le p}  \sigma_{i_1}\ldots \sigma_{i_{p-j}}$ and $\sigma_j$ is the $j$th singular value of $\textup{diag}(\bm{V}) \bm{J}_k$.
Here, $\sigma_1= v_{k,k}$, $\sigma_2=\ldots = \sigma_p =1$ when $v_{k,k}\ge 1$, and $\sigma_{p}=v_{k,k}$, $\sigma_1=\ldots, \sigma_{p-1}=1$ when $v_{k,k}<1$.
Since
\bea 
|v_{k,k}| 
&\le&  |\omega_{k,k}-1| + |\lambda/d_k -1| \\
&\le& c_1/2 + \frac{c-1}{2c} \\
&\le& 3c_1/2,
\eea 
where the last inequality is satisfied since $(c-1)/(2c) \le (c-1)/2 $, $(c-1)/(2c) \le \frac{(c-1)}{c}\Big( 1-\frac{1}{2c} \Big) $ and consequently $(c-1)/(2c) \le (c-1)/2\wedge \frac{(c-1)}{c}\Big( 1-\frac{1}{2c} \Big) =c_1$,
we have
\bea 
s_{p-j}  &\le&
\sum_{1\le i_1< \ldots < i_{p-j} \le p}(3c_1/2\vee 1) \\
&\le & (3c_1/2\vee 1)\binom{p}{p-j} \\
&\le& (3c_1/2\vee 1) p^j.
\eea 

We have
\bea 
\sum_{j=1}^p s_{p-j} ||\bm{E}||_2^j &\le& (3c_1/2\vee 1) \sum_{j=1}^p (p||\bm{E}||_2)^j \\
&\le& (3c_1/2\vee 1) \frac{p||\bm{E}||_2}{1-p||\bm{E}||_2} \\
&\le& 2(3c_1/2\vee 1) p||\bm{E}||_2,
\eea 
where the last inequality is satisfied because condition $p||\bm\Omega - \bm{I}_p||_2\le c_1/8$ implies $p||\bm{E}||_2\le 1/2$.
Thus, we obtain
\bea 
    |\tilde\phi(\lambda ) - (\omega_{k,k} -\lambda/d_k)| 
    &\le& (3c_1\vee 2) p||\bm{E}||_2\\ %+ C||\bm{E}||_2^2,\\
    &\le& (12\vee 8/c_1) p ||\bm\Omega - \bm{I}_p||_2. 
    \eea 
    %for some positive constant $C$ that does not depend on $k$.
    %\footnote{JL:$k$가 아니라 $p$인 것 아닌가? \\ 위 부등식에서 the first-order bound in Remark 2.9 of \cite{ipsen2008perturbation}를 사용했을 때 $p$는 어떻게 구해지고, $C$는 왜 $p$에 의존하지 않는 상수가 되나?  이 부등식을 좀 더 자세히 쓰면  \[ |det(A) - det(A+E)| \leq s_{n-1} ||E||_2 + (\sum_{j=2}^n s_{n-j}) ||E||_2^2\]이 된다. 이 부등식을 적용했을 때 어떻게 $p$와 $p$에 의존하지 않은 상수 $C$를 얻을 수 있나? $\longrightarrow$ \color{blue} KL: 이부분을 다시 자세히 작성하여 상수를 구체적으로 작성하였습니다. 즉, upper bound를 $(3c_1\vee 2) p||\bm{E}||_2$로 구했습니다.\color{black}}

When $v_{k,k}<0$, we have
\bea 
|\tilde\phi(\lambda ) - (\omega_{k,k} -\lambda/d_k)|  &=&
|\textup{det}(\bm{V}\bar{\bm{J}}_k) - \textup{det}(\textup{diag}(\bm{V})\bar{\bm{J}}_k)|,
\eea %\footnote{JL:$\textup{diag}(\bm{V})\bar{\bm{J}}_k$ 아닌가? 다음 줄도 마찬가지. }
where $\bar{\bm{J}}_k = \textup{diag}(1,\ldots,1,-v_{k,k},1,\ldots, 1)$. 
Since every element of $\textup{diag}(\bm{V})\bar{\bm{J}}_k$ is larger than or equal to $0$, Corollary 2.7 in \cite{ipsen2008perturbation} gives 
\bea 
    |\tilde\phi(\lambda ) - (\omega_{k,k} -\lambda/d_k)| 
    &\le& (3c_1\vee 2) p||\bar{\bm{E}}||_2 \\
    &\le& (12\vee 8/c_1) p ||\bm\Omega - \bm{I}_p||_2,
    \eea %\footnote{JL:  이 식에서 $E$는 $\bar{E} := (V-diag(V))\bar{J}_k$로 바꾸어야 한다. \\
    %이 부등식의 결과, 즉 \bea 
    %|\tilde\phi(\lambda ) - (\omega_{k,k} -\lambda/d_k)| 
    %&\le& p||\bm{E}||_2 + C||\bm{E}||_2^2,
    %\eea 를 레마로 따로 빼는 것이 좋겠다. }
    where $\bar{\bm{E}}=(\bm{V}-\textup{diag}(\bm{V}))\bar{\bm{J}}_k $ and the last inequality is satisfied because
    \bea 
    ||\bar{\bm{E}}||_2 &\le& ||(\bm\Omega - \textup{diag}(\bm\Omega))\bar{\bm{J}}_k||_2 \\
    &\le& ||\bm\Omega - \textup{diag}(\bm\Omega)||_2 ||\bar{\bm{J}}_k||_2 \\
    &=& ||\bm\Omega - \textup{diag}(\bm\Omega)||_2 ||\bm{J}_k||_2 \\
    &\le&  \frac{4||\bm\Omega -\bm{I}_p||_2}{ c_1 } ~ \textup{(See \eqref{eq:Eineq})}.
    \eea

\end{proof}

\begin{proof}[Proof of Theorem \ref{thm:loweigen}]
    The eigenvalues $\lambda_1,\ldots, \lambda_p$ are the roots of the characteristic polynomial $\textup{det}(\bm\Sigma - \lambda \bm{I}_p) $.
    The spectral decomposition gives $\bm\Sigma_0 = \bm{U}\bm{D}\bm{U}^T$, where $\bm{D}= \textup{diag}(d_1,\ldots, d_p)$. 
    Since $\textup{det}(\bm\Sigma - \lambda \bm{I}_p) = \textup{det}(\bm\Sigma_0)\textup{det}(\bm\Sigma_0^{-1/2}\bm\Sigma\bm\Sigma_0^{-1/2} - \lambda \bm{U}\bm{D}^{-1}\bm{U}^T) = 
    \textup{det}(\bm\Sigma_0)\textup{det}(\bm\Omega - \lambda\bm{D}^{-1})$, where $\bm\Omega = (\omega_{ij})_{1\leq i,j \leq p} = \bm{U}^T\bm\Sigma_0^{-1/2}\bm\Sigma\bm\Sigma_0^{-1/2} \bm{U}$,
    %\footnote{JL: $\bm\Omega = (\omega_{ij})_{1\leq i,j \leq p}$ 정의가 되어 있지 않아서 여기서 정의했다.}
    $\lambda_1,\ldots,\lambda_p$ are also the roots of  
\bea 
    \phi(\lambda ) = \textup{det} (\bm\Omega - \lambda \bm{D}^{-1})  = 0 .
    \eea 
    For arbitrary $k=1,\ldots, p$, we show that there exists a root of $\phi(\lambda ) $ in $A_k = \{ \lambda : |\lambda/d_k -1 | < (c-1)/(2c)\}$.
    Since $\phi(\lambda)$ is a continuous function, it suffices to show $\phi(\lambda_+) \phi(\lambda_-) <0 $, where $\lambda_+ = d_k\{1+ (c-1)/(2c)\}$ and $\lambda_- = d_k\{1- (c-1)/(2c)\}$.

    Let $\bm{V}= \bm\Omega - \lambda \bm{D}^{-1}  = [\bm{v}_1,\ldots, \bm{v}_p]$, where $\bm{v}_j = (v_{j,1},\ldots, v_{j,p})^T$, and let $$\bm{J}_k = \textup{diag}( 1/v_{1,1},\ldots, 1/v_{k-1,k-1}, 1,1/v_{k+1,k+1},1/v_{p,p}).$$ 

    We set $\delta = \frac{c-1}{2c\{ (12\vee 8/c_1) +1 \}} $.
    When $\lambda \in A_k$, 
    since $|\omega_{j,j}-1 |\le ||\bm\Omega -\bm{I}_p||_2 = ||\bm\Sigma_0^{-1/2}\bm\Sigma\bm\Sigma_0^{-1/2}- \bm{I}_p||_2 \le \delta \le c_1/2$, where $c_1$ is defined in Lemma \ref{lemma:vineq}, Lemma \ref{lemma:vineq} gives 
    \bean\label{eq:signsv}
    \begin{cases}
        v_{j,j} >0, & j<k\\
        v_{j,j} <0, & j>k
    \end{cases}.
    \eean %\footnote{JL: 이 식을 레마로 따로 뽑는 것이 좋겠다. }
    %Suppose $j<k $ and let $c_1 = \frac{c-1}{2} \wedge\frac{c-1}{c} \Big( 1- \frac{1}{2c}\Big)$.
    %\bea 
    %v_{j,j} &=& \omega_{j,j} -\lambda/d_{j}  \\
    %&\ge& 1-\lambda/d_{j}   - |1-\omega_{j,j}| \\
    %&\ge&  c_1- ||\bm\Omega- \bm{I}_p||_2 \\
    %&=&  c_1- ||\bm\Sigma_0^{-1/2}\bm\Sigma\bm\Sigma_0^{-1/2}- \bm{I}_p||_2 \\
    %&\ge& c_1/2 \\
    %&>& 0,
    %\eea \footnote{JL: 아래서 두번째 부등식이 성립하기 위해서는 "for all sufficiently large $n$"이라는 말이 필요하다. 이 말은 정리의 결론에도 필요하다. }
    %where $\omega_{j,j}$ is the $(j,j)$ element of $\bm\Omega$, the second inequality is satisfied by Lemma \ref{main-lemma:dratio}, and the third inequality is satisfied the assumption of $||\bm\Sigma_0^{-1/2}\bm\Sigma\bm\Sigma_0^{-1/2}- \bm{I}_p||_2$.
    %The following inequality is also shown similarly when $j>k $.
    %\bea 
    %v_{j,j} &=& \omega_{j,j} -\lambda/d_{j}  \\
    %&\le& 1-\lambda/d_{j}   + |1-\omega_{j,j}| \\
    %&\le&  -c_1 + ||\bm\Omega- \bm{I}_p||_2 \\
    %&=&  -c_1+ ||\bm\Sigma_0^{-1/2}\bm\Sigma\bm\Sigma_0^{-1/2}- \bm{I}_p||_2 \\
    %&\le& -c_1/2 \\
    %&<& 0.
    %\eea \footnote{JL: 아래서 두번째 부등식이 성립하기 위해서는 "for all sufficiently large $n$"이라는 말이 필요하다.  }
    Thus, since $v_{j,j}$ is not zero when $j\neq k$, $\bm{J}_k$ is well-defined.

    Define
    \bea 
    \tilde\phi(\lambda) &=& 
    \textup{det}(\bm{V}\bm{J}_k)
    \\
    &=&
    \textup{det} ( (\bm\Omega - \lambda \bm{D}^{-1} )\bm{J}_k) \\
    &=& \prod_{j\neq k}v_{j,j}^{-1}\textup{det} ( \bm\Omega - \lambda \bm{D}^{-1} ) \\
    &=& \prod_{j\neq k}v_{j,j}^{-1} \phi(\lambda).
    \eea %\footnote{JL: 위의 두식에서 $\prod_{j\neq k}v_{j,j}^{-1}$이 맞는 것 아닌가?}
%    where the second equality is satisfied by the multilinear property of the determinant. 
Then, by \eqref{eq:signsv}, it suffices to show $\tilde\phi(\lambda_+)\tilde\phi(\lambda_-) <0$. 

    %\color{purple}
      % When $\lambda_- \le \lambda \le \lambda_+$, since $ p||\bm\Omega - \bm{I}_p||_2 = p||\bm\Sigma_0^{-1/2}\bm\Sigma\bm\Sigma_0^{-1/2}-\bm{I}||_2\le  \frac{c-1}{2c\{ (12\wedge 8/c_1) +1 \}} \le c_1/8$ by the condition of this theorem, Lemma \ref{lemma:chracterineq} gives
      When $\lambda_- \le \lambda \le \lambda_+$, since $ p||\bm\Omega - \bm{I}_p||_2 = p||\bm\Sigma_0^{-1/2}\bm\Sigma\bm\Sigma_0^{-1/2}-\bm{I}||_2\le  \delta \le c_1/8$, %by condition of this theorem, 
      Lemma \ref{lemma:chracterineq} gives
      \bea 
     \omega_{k,k} - \lambda/d_k  - C_1p ||\bm\Omega - \bm{I}_p||_2 \le \tilde\phi(\lambda)  \le \omega_{k,k} - \lambda/d_k  + C_1 p ||\bm\Omega - \bm{I}_p||_2,
    \eea 
    where $C_1= (12\vee 8/c_1)$.
    Then,
    \bea 
    \tilde\phi(\lambda_+) &\le&
     \omega_{k,k}  - (1+(c-1)/(2c)) + C_1 p ||\bm\Omega - \bm{I}_p||_2  \\
     &\le& 
    -(c-1)/(2c) +  (C_1p +1)||\bm\Omega - \bm{I}_p||_2\\
    &<& 0,\\
    \tilde\phi(\lambda_-)  &\ge&
    \omega_{k,k}  + (1+(c-1)/(2c)) - C_1p ||\bm\Omega - \bm{I}_p||_2  \\
     &\ge& 
    (c-1)/(2c) - (C_1p +1)||\bm\Omega - \bm{I}_p||_2\\
    &>& 0,
    \eea %\footnote{JL: 위 두 부등식의 마지막 부등식들 $< 0$와 $> 0$를 위해서는 for all sufficiently large $n$이라는 말이 필요하다. \\
    %위에서 세번째 부등식의 $C$는 두번째 부등식의 $C$와 다른 값이다. 세번째 부등식의 $C$를 정확한 값으로 고치던지 아니면 $C'$과 같은 다른 이름을 붙이는 것이 좋겠다. $\tilde\phi(\lambda_-)$에 대한 부등식에서도 마찬가지이다. \\
    %증명 중에는 $B :=\bm\Sigma_0^{-1/2}\bm\Sigma\bm\Sigma_0^{-1/2} -\bm{I}_p$라고 이름을 붙이는 것이 좋겠다. 정리를 서술할 때는 그대로 쓰는 것이 좋겠다. 
    %}
    where the last inequalities are satisfied by %assumption 
    % $p||\bm\Sigma_0^{-1/2}\bm\Sigma\bm\Sigma_0^{-1/2}-\bm{I}||_2\le \frac{c-1}{2c\{ (12\wedge 8/c_1) +1 \}}$.
    $p||\bm\Sigma_0^{-1/2}\bm\Sigma\bm\Sigma_0^{-1/2}-\bm{I}||_2\le \frac{c-1}{2c\{ (12\vee 8/c_1) +1 \}}$.
    Thus, there exists $\lambda\in (\lambda_- ,\lambda_+)$ such that $\tilde\phi(\lambda )=0$, and the solution is denoted by $\hat\lambda$.
    That is, for each $k = 1, \ldots, p$, there exists at least one value of $\lambda$ such that $\tilde\phi(\lambda) = 0$ within the interval 
    $$(d_k\{1 -(c - 1)/(2c)\}, d_k\{1 + (c - 1)/(2c)\}),~ k=1,\ldots, p.$$
    Note that the maximum number of roots of the characteristic polynomial is $p$. Thus, if these intervals do not overlap for different values of $k$, then there exists only one root in each interval.
    
    Under the assumption that $\min_{l = 1, \ldots, p - 1} \frac{d_l}{d_{l+1}} > c,$ we have
    $$d_k \{1 + (c - 1)/(2c)\} \leq d_{k - 1} \{1 -(c - 1)/(2c)\},$$
    since $c > \dfrac{1 +(c - 1)/(2c)}{1 - (c - 1)/(2c)}$, which ensures that the intervals are disjoint. Therefore, $\hat\lambda$ is the $k$-th eigenvalue of the matrix $\bm\Omega - \lambda \bm{D}^{-1}$.
        
    We have
     \bea 
     |\hat\lambda/ d_k-1| &\le&  | \omega_{k,k} - \hat\lambda/ d_k| + ||\bm\Omega -\bm{I}_p||_2 \\
     &=&|\tilde\phi(\hat\lambda) -  (\omega_{k,k} - \hat\lambda/ d_k)| + ||\bm\Omega -\bm{I}_p||_2 \\
    &\le&     \{(12\vee 8/c_1) p +1\}||\bm\Omega - \bm{I}_p||_2.
    \eea 
    where the first equality is satisfied since $\tilde\phi(\hat\lambda)=0$ and the last inequality is satisfied by Lemma \ref{lemma:chracterineq}.
    %\bea   
    %|\frac{\hat\lambda }{d_k} -1 | &\le&  p||\bm{E}||_2 + C||\bm{E}||_2^2\\
    %&\le& \{(4p+C||\bm\Sigma_0^{-1/2}\bm\Sigma\bm\Sigma_0^{-1/2} -\bm{I}_p||_2)/c_1  +1\} ||\bm\Sigma_0^{-1/2}\bm\Sigma\bm\Sigma_0^{-1/2} -\bm{I}_p||_2.
    %\eea \footnote{JL: 첫번째 부등식이 성립하는 이유는? \\ 
    %마지막 식은 정확하지 않다. 괄호안의 1은 없애야 한다. 즉,  $\frac{4}{c_1}||B||_2(p + 4C ||B||_2) $가 되어야 한다. 여기서 $B :=\bm\Sigma_0^{-1/2}\bm\Sigma\bm\Sigma_0^{-1/2} -\bm{I}_p$. 
    %}
    Since we have proved the inequality for arbitrary $k\in \{1,\ldots, p\}$, we obtain
    \bea 
    \sup_{l=1,\ldots,p}|\frac{\lambda_l }{d_l} -1 | &\le&   \{(12\vee 8/c_1) p +1\}||\bm\Omega - \bm{I}_p||_2 \\
    &=&   \{(12\vee 8/c_1) p +1\}||\bm\Sigma_0^{-1/2}\bm\Sigma\bm\Sigma_0^{-1/2} - \bm{I}_p||_2 
    \eea 
    
    Finally, we give the upper bound of $\sup_{l=1,\ldots,p}\left|\frac{\sqrt{\lambda_l }}{\sqrt{d_l}} -1 \right|$.
    %\color{purple}
 %    Let \delta = \{(12\vee 8/c_1) p +1\}||\bm\Sigma_0^{-1/2}\bm\Sigma\bm\Sigma_0^{-1/2} - \bm{I}_p||_2 $
 %    that is smaller than $1$ by the condition of $||\bm\Sigma_0^{-1/2}\bm\Sigma\bm\Sigma_0^{-1/2} - \bm{I}_p||_2$.
 % Since $\frac{\lambda_k}{d_k} > 1- \delta$, we have %\footnote{JL: 문장을 고쳤다. }
 %    \bea 
 %    |\frac{\sqrt{\lambda_l}}{\sqrt{d_l}} - 1| &=&
 %    \frac{|\lambda_l/d_l - 1|}{\sqrt{\lambda_l}/\sqrt{d_l} + 1 } \\
 %    &\le& \frac{\delta}{1+ \sqrt{1-\delta}}\\
 %    &\le& \delta.
 %    \eea %\footnote{JL: 첫번째 부등식에서  $\sqrt{1-\delta}$를 쓰려면 $1-\delta > 0$이어야 한다. 이를 위해서는 역시 for all sufficiently large $n$이라는 말이 필요한 것 아닌가? \\ 마지막 부등식에서 $\sqrt{1-\delta} \geq 0$이므로 결과가 $2\delta$가 아니라 $\delta$로 바꾸어도 되는 것 아닌가? $\lra$ KL : 1보다 작다는 조건을 명시하였습니다. }
    Let $\delta_0 = \{(12\vee 8/c_1) p +1\}||\bm\Sigma_0^{-1/2}\bm\Sigma\bm\Sigma_0^{-1/2} - \bm{I}_p||_2 $
    that is smaller than $1$ by the condition of $||\bm\Sigma_0^{-1/2}\bm\Sigma\bm\Sigma_0^{-1/2} - \bm{I}_p||_2$.
 Since $\frac{\lambda_k}{d_k} > 1- \delta_0$, we have %\footnote{JL: 문장을 고쳤다. }
    \bea 
    \left|\frac{\sqrt{\lambda_l}}{\sqrt{d_l}} - 1\right| &=&
    \frac{|\lambda_l/d_l - 1|}{\sqrt{\lambda_l}/\sqrt{d_l} + 1 } \\
    &\le& \frac{\delta_0}{1+ \sqrt{1-\delta_0}}\\
    &\le& \delta_0.
    \eea 
    %\color{black} \footnote{SK : 증명 앞부분에 $\delta$ 값을 명시해주고, 해당 문단에서는 $\delta$를 $\delta_0$로 변경했습니다.}
    
\end{proof}

\section{Proof of Theorem \ref{thm:higheigen}} \label{sec:pf3.3}

We give the proof of Theorem \ref{thm:higheigen} using the following lemma.

\begin{lemma}\label{lemma:sqrteigen}
    Let $\bm\Omega\in \calC_p$, and define $\bm\Gamma\in \bbR^{p\times K}$ and $\bm\Gamma_\perp \in \bbR^{p\times (p-K)}$ such that $\bar{\bm\Gamma} = [\bm\Gamma,\bm\Gamma_\perp]$ is an orthogonal matrix. Then, 
    \bea 
    \left|\sqrt{\lambda_k(\bm\Sigma)} - \sqrt{\lambda_k(\bm\Gamma^T\bm\Sigma\bm\Gamma)}\right| \le 
    ||\bm\Gamma_\perp^T \bm\Sigma \bm\Gamma_\perp ||^{1/2}, ~k=1,\ldots, K.
    \eea 
\end{lemma}
\begin{proof}[Proof of Lemma \ref{lemma:sqrteigen}]

    Since $\lambda_k(\bm\Sigma) = \lambda_k(\bar{\bm\Gamma}^T \bm\Sigma \bar{\bm\Gamma})$, 
    without loss of generality, it suffices to show 
    \bea 
    \left|\sqrt{\lambda_k(\bm\Sigma)} - \sqrt{\lambda_k(\bm\Sigma_{11})} \right| \le 
    ||\bm\Sigma_{22}||^{1/2}, 
    \eea 
where $\bm\Sigma = \begin{pmatrix}
        \bm\Sigma_{11} & \bm\Sigma_{12}\\
        \bm\Sigma_{21} & \bm\Sigma_{22}
    \end{pmatrix} \in \calC_p$ with $\bm\Sigma_{11}\in \calC_K$.
    %Then, $\lambda_k (\bm\Sigma ) =  \lambda_k(\bar{\bm\Gamma}^T  \bm\Sigma \bar{\bm\Gamma}) = \sigma_k(\bm\Sigma^{1/2}\bar{\bm\Gamma})^2 $, where $\sigma_k(\bm\Sigma^{1/2}\bar{\bm\Gamma})$ is the $k$th singular value of $\bm\Sigma^{1/2}\bar{\bm\Gamma}$.

    We have 
    \bea 
    \lambda_k(\bm\Sigma_{11}) &=& \lambda_k \left( \begin{pmatrix}
        \bm\Sigma_{11} & \bm{O}\\
        \bm{O} & \bm{O}
    \end{pmatrix} \right) \\
    &=& \lambda_k \left( \begin{pmatrix}
        \bm{I}_K & \bm{O}\\
        \bm{O} & \bm{O}
    \end{pmatrix}\bm\Sigma\begin{pmatrix}
        \bm{I}_K & \bm{O}\\
        \bm{O} & \bm{O}
    \end{pmatrix} \right) \\
    &=& \left[\sigma_k\left(\bm\Sigma^{1/2}\begin{pmatrix}
        \bm{I}_K & \bm{O}\\
        \bm{O} & \bm{O}
    \end{pmatrix} \right)\right]^2.
    \eea 
The Weyl's inequality for singular values (Theorem 3.3.16 in \cite{horn1994topics}) gives
    \bea 
    \left|\sqrt{\lambda_k(\bm\Sigma)} - \sqrt{\lambda_k(\bm\Sigma_{11})} \right| &=&
    \left|\sigma_k\left(\bm\Sigma^{1/2} \right) - \sigma_k\left(\bm\Sigma^{1/2}\begin{pmatrix}
        \bm{I}_K & \bm{O}\\
        \bm{O} & \bm{O}
    \end{pmatrix} \right) \right|\\
    &\le& \left \| \bm\Sigma^{1/2}\begin{pmatrix}
         \bm{O}&\bm{O} \\
        \bm{O} & \bm{I}_{p-K}
    \end{pmatrix} \right \|_2 \\
    &=& || \bm\Sigma_{22}||_2^{1/2}.
    \eea 

%    Let $\bm\Sigma  = \bar{\bm\Gamma}^T\bm\Omega \bar{\bm\Gamma} $.
%    Then, $\bm\Sigma_{11} = \bm\Gamma^T\bm\Sigma\bm\Gamma$, $\bm\Sigma_{22}=\bm\Gamma_\perp^T \bm\Sigma \bm\Gamma_\perp $ and $\lambda_k(\bm\Sigma) =\lambda_k(\bm\Omega)$. Thus, we obtain 
%    \bea 
%    |\sqrt{\lambda_k(\bm\Sigma)} - \sqrt{\lambda_k(\bm\Gamma^T\bm\Sigma\bm\Gamma)}| \le 
%    ||\bm\Gamma_\perp^T \bm\Sigma \bm\Gamma_\perp ||^{1/2}, ~k=1,\ldots, K.
%    \eea 

\end{proof}

\begin{proof}[Proof of Theorem \ref{thm:higheigen}]

    First, we show that $\left|\frac{\sqrt{\lambda_l(\bm\Sigma) }}{\sqrt{\lambda_{0,l}}}  -1 \right| \le Ct$ when $ K || \bm\Gamma^T \bm\Omega \bm\Gamma  -\bm{I}_K ||_2\le t$ and $\frac{\sqrt{\lambda_{0,K+1}}||\bm\Gamma_\perp^T \bm\Omega \bm\Gamma_\perp ||^{1/2}}{\sqrt{\lambda_{0,l}} } \le t$ with $t \le \delta$ for some positive constant $\delta$ and $C$. 
    
    We have
    \bea 
    \left|\frac{\sqrt{\lambda_l(\bm\Sigma) }}{\sqrt{\lambda_{0,l}}}  -1 \right| &\le& 
    \left|\frac{\sqrt{\lambda_l(\bm\Sigma) } - \sqrt{\lambda_l(\bm\Gamma^T\bm\Sigma\bm\Gamma) }}{\sqrt{\lambda_{0,l}}} \right|+ 
    \left|\frac{\sqrt{\lambda_l(\bm\Gamma^T\bm\Sigma\bm\Gamma)}}{\sqrt{\lambda_{0,l}}} -1  \right| \\
    &\le& \frac{||\bm\Gamma_\perp^T \bm\Sigma \bm\Gamma_\perp ||^{1/2}}{\sqrt{\lambda_{0,l}} }+ \left|\frac{\sqrt{\lambda_l(\bm\Gamma^T\bm\Sigma\bm\Gamma)}}{\sqrt{\lambda_{0,l}}} -1 \right| ,
    \eea
    where the last inequality is satisfied by Lemma \ref{lemma:sqrteigen}.
    We have $|| \bm{D}^{-1/2} \bm\Gamma^T\bm\Sigma\bm\Gamma \bm{D}^{-1/2} -\bm{I}_K||_2 = || \bm\Gamma^T \bm\Omega \bm\Gamma  -\bm{I}_K ||_2 $ since  $\bm\Sigma_0^{-1/2} \bm\Gamma = \bm\Gamma \bm{D}^{-1/2}$, where 
    $\bm\Omega = \bm\Sigma_0^{-1/2}\bm\Sigma \bm\Sigma_0^{-1/2}$ and $\bm{D} = \bm\Gamma^T\bm\Sigma_0\bm\Gamma = \textup{diag}(\lambda_{0,1},\ldots, \lambda_{0,K})$.
    Then, we have $K|| \bm{D}^{-1/2} \bm\Gamma^T\bm\Sigma\bm\Gamma \bm{D}^{-1/2} -\bm{I}_K||_2 \le t$. Theorem \ref{thm:loweigen} gives, when $t$ is smaller than some positive constant dependent on $c$, 
    %when $K|| \bm{D}_0^{-1/2} \bm\Gamma^T\bm\Sigma\bm\Gamma \bm{D}_0^{-1/2} -\bm{I}_K||_2 \le \delta_*$ for some positive constant $\delta>0$, 
    \bea 
    \left|\frac{\sqrt{\lambda_l(\bm\Gamma^T\bm\Sigma\bm\Gamma)}}{\sqrt{\lambda_{0,l}}} -1 \right| &\le& C_1 K|| \bm{D}^{-1/2} \bm\Gamma^T\bm\Sigma\bm\Gamma \bm{D}^{-1/2} -\bm{I}_K||_2  \\
    &=& C_1K || \bm\Gamma^T \bm\Omega \bm\Gamma  -\bm{I}_K ||_2,
    \eea 
    for some positive constant $C_1$ dependent on $c$.
    Since $\bm\Sigma_0^{-1/2}\bm\Gamma_\perp  = \bm\Gamma_\perp\bm{D}_\perp^{-1/2}$, where $\bm{D}_\perp = \textup{diag}(\lambda_{0,K+1},\ldots, \lambda_{0,p})$,
    \bea 
    ||\bm\Gamma_\perp^T \bm\Sigma \bm\Gamma_\perp  || &=&
    ||\bm{D}_\perp^{1/2}\bm{D}_\perp^{-1/2}\bm\Gamma_\perp \bm\Sigma \bm\Gamma_\perp \bm{D}_\perp^{-1/2} \bm{D}_\perp^{1/2}|| \\
    &\le& ||\bm\Gamma_\perp^T \bm\Sigma_0^{-1/2}  \bm\Sigma \bm\Sigma_0^{-1/2} \bm\Gamma_\perp ||_2 ||\bm{D}_\perp||_2 \\
    &=& ||\bm\Gamma_\perp^T \bm\Omega \bm\Gamma_\perp ||_2 \lambda_{0,K+1}.
    \eea 
    Collecting the inequalities, 
    \bean\label{eq:evalueineq} 
        \left|\frac{\sqrt{\lambda_l(\bm\Sigma) }}{\sqrt{\lambda_{0,l}}}  -1 \right|&\le& ||\bm\Gamma_\perp^T \bm\Omega \bm\Gamma_\perp ||_2^{1/2} \frac{\sqrt{\lambda_{0,K+1}}}{\sqrt{\lambda_{0,l}}}+   C_1K || \bm\Gamma^T \bm\Omega \bm\Gamma  -\bm{I}_K ||_2\nonumber\\
        &\le& (C_1+1)t.
    \eean 

    Next, we have
    %Next, we show that the upper bound of $P( |\frac{\lambda_k(\bm\Sigma)}{\lambda_k(\bm\Sigma_0)} -1| >3M_n\epsilon_n) $ has the asymptotically same rate.
    %When $|\frac{\sqrt{\lambda_k(\bm\Sigma)}}{\sqrt{\lambda_k(\bm\Sigma_0)}} -1|  <\epsilon_n$ is satisfied with $\epsilon_n\longrightarrow 0$, we have
    % \bean\label{eq:evalueineq2} 
    % |\frac{\lambda_l(\bm\Sigma)}{\lambda_l(\bm\Sigma_0)} -1| &\le&
    %  |\frac{\sqrt{\lambda_l(\bm\Sigma) }}{\sqrt{\lambda_{0,l}}}  -1|
    % |\frac{\sqrt{\lambda_l(\bm\Sigma)}}{\sqrt{\lambda_l(\bm\Sigma_0)}} +1|  \nonumber\\
    % &\le& |\frac{\sqrt{\lambda_l(\bm\Sigma) }}{\sqrt{\lambda_{0,l}}}  -1|( 2 + (C_1+1)t) \nonumber\\
    % &\le& C_2|\frac{\sqrt{\lambda_l(\bm\Sigma) }}{\sqrt{\lambda_{0,l}}}  -1|\nonumber\\
    % &\le& C_2 ( ||\bm\Gamma_\perp^T \bm\Omega \bm\Gamma_\perp ||_2^{1/2} \frac{\sqrt{\lambda_{0,K+1}}}{\sqrt{\lambda_{0,l}}}+   C_1K || \bm\Gamma^T \bm\Omega \bm\Gamma  -\bm{I}_K ||_2),
    % \eean
    %\textcolor{purple}{
    \bean\label{eq:evalueineq2} 
    \left|\frac{\lambda_l(\bm\Sigma)}{\lambda_{0,l}} -1 \right| &\le&
     \left|\frac{\sqrt{\lambda_l(\bm\Sigma) }}{\sqrt{\lambda_{0,l}}}  -1 \right|
    \left|\frac{\sqrt{\lambda_l(\bm\Sigma)}}{\sqrt{\lambda_{0,l}}} +1 \right|  \nonumber\\
    &\le& \left|\frac{\sqrt{\lambda_l(\bm\Sigma) }}{\sqrt{\lambda_{0,l}}}  -1 \right|( 2 + (C_1+1)t) \nonumber\\
    &\le& C_2\left|\frac{\sqrt{\lambda_l(\bm\Sigma) }}{\sqrt{\lambda_{0,l}}}  -1 \right|\nonumber\\
    &\le& C_2 \left( ||\bm\Gamma_\perp^T \bm\Omega \bm\Gamma_\perp ||_2^{1/2} \frac{\sqrt{\lambda_{0,K+1}}}{\sqrt{\lambda_{0,l}}}+   C_1K || \bm\Gamma^T \bm\Omega \bm\Gamma  -\bm{I}_K ||_2 \right),
    \eean
   % }\footnote{SK : $\lambda_l(\Sigma_0)$를 $\lambda_{0,l}$로 변경}
    for some positive constant $C_2$ dependent on $c$.
    
%    for all sufficiently large $n$.
%    Thus, 
 %   \bea 
 %   P( |\frac{\lambda_k(\bm\Sigma)}{\lambda_k(\bm\Sigma_0)} -1| >3\epsilon_n) \le 
 %   P( |\frac{\sqrt{\lambda_k(\bm\Sigma)}}{\sqrt{\lambda_k(\bm\Sigma_0)}} -1| >\epsilon_n) .
 %   \eea 
    Then, we obtain 
    %Collecting the inequalities, when $K|| \bm{D}_0^{-1/2} \bm\Gamma^T\bm\Sigma\bm\Gamma \bm{D}_0^{-1/2} -\bm{I}_K||_2 \le \delta$
    \bea 
    %|\frac{\lambda_k(\bm\Sigma)}{\lambda_k(\bm\Sigma_0)} -1| &\le& 
    %C  \Big[ \frac{||\bm\Gamma_\perp^T \bm\Sigma \bm\Gamma_\perp ||^{1/2}}{\sqrt{\lambda_{0,l}} } +  K|| \bm{D}_0^{-1/2} \bm\Gamma^T\bm\Sigma\bm\Gamma \bm{D}_0^{-1/2} -\bm{I}_K||_2 \Big],\\
    %&\le& C  \Big[ \frac{\sqrt{\lambda_{0,K+1}}||\bm\Gamma_\perp^T \bm\Omega \bm\Gamma_\perp ||^{1/2}}{\sqrt{\lambda_{0,l}} } +  K||  \bm\Gamma^T\bm\Omega\bm\Gamma  -\bm{I}_K||_2 \Big],\\
    \sup_{l=1,\ldots, k}\left|\frac{\lambda_l(\bm\Sigma)}{\lambda_l(\bm\Sigma_0)} -1 \right| %&\le& C  \Big[ \frac{||\bm\Gamma_\perp^T \bm\Sigma \bm\Gamma_\perp ||^{1/2}}{\sqrt{\lambda_{0,k}} } +  K|| \bm{D}_0^{-1/2} \bm\Gamma^T\bm\Sigma\bm\Gamma \bm{D}_0^{-1/2} -\bm{I}_K||_2 \Big]\\
    &\le& C_2 (C_1+1)t.
    %C_2  \Big[ \frac{\sqrt{\lambda_{0,K+1}}||\bm\Gamma_\perp^T \bm\Omega \bm\Gamma_\perp ||^{1/2}}{\sqrt{\lambda_{0,k}} } +  C_1K||  \bm\Gamma^T\bm\Omega\bm\Gamma  -\bm{I}_K||_2 \Big],
    \eea 
%    when $ K || \bm\Gamma^T \bm\Omega \bm\Gamma  -\bm{I}_K ||_2 \vee \frac{\sqrt{\lambda_{0,K+1}}||\bm\Gamma_\perp^T \bm\Omega \bm\Gamma_\perp ||^{1/2}}{\sqrt{\lambda_{0,k}} } \le t_*$ where $t_*$ is assumed to be smaller than a positive constant dependant on $c$.
    Thus, we obtain
    \bea 
    P\left( \sup_{l=1,\ldots, k}\left|\frac{\lambda_l(\bm\Sigma)}{\lambda_l(\bm\Sigma_0)} -1 \right| > C_2 (C_1+1) t \right) &\le& P(K||  \bm\Gamma^T\bm\Omega\bm\Gamma  -\bm{I}_K||_2 > t  ) \\
    &&+ P\left( \frac{\sqrt{\lambda_{0,K+1}}||\bm\Gamma_\perp^T \bm\Omega \bm\Gamma_\perp ||^{1/2}}{\sqrt{\lambda_{0,k}} }  > t \right),
    \eea 
    for all $t\le \delta$ for some positive constant $\delta$ dependent on $c$.

%    Since $\lambda_k(\bm\Sigma_0) = \lambda_k(\bm\Gamma^T \bm\Sigma_0\bm\Gamma)$, Theorem \ref{thm:loweigen} gives
%    \bea 
%    P(|\frac{\sqrt{\lambda_k(\bm\Gamma^T\bm\Sigma\bm\Gamma)}}{\sqrt{\lambda_{0,k}}} -1 | >M_n\epsilon_n/2  ) &\le&
%    P( K|| \bm{D}_0^{-1/2} \bm\Gamma^T\bm\Sigma\bm\Gamma \bm{D}_0^{-1/2} -\bm{I}_K||_2 >C M_n\epsilon_n/2  ) ,
%    \eea 
%    for some positive constant $C$. 
%    \footnote{JL: 마지막 부등식에 정리 6.1을 적용하기 위해서는 $|| \bm{D}_0^{-1/2} \bm\Gamma^T\bm\Sigma\bm\Gamma \bm{D}_0^{-1/2} -\bm{I}_K||_2$가 작아야 한다. 이 가정이 성립하나? \\
%    마지막 부등식 우변의 상수들이 맞나 확인해보기 바란다. }

\end{proof}

\section{Proof of Theorem \ref{corr:IWeigenvalue}}\label{sec:pf3.4}
Next, we give the proof of Theorem \ref{corr:IWeigenvalue} using the following lemmas.
\begin{lemma}\label{lemma:blocknorm0}
Let  
    \bea 
    \bm{A} = \begin{pmatrix}
        \bm{A}_{11} & \bm{A}_{12}\\
        \bm{A}_{21} & \bm{A}_{22}
    \end{pmatrix} \in \calC_p,
    \eea 
    where $\bm{A}_{11}\in \calC_{p_1}$ and $\bm{A}_{22}\in \calC_{p_2}$. 
    Then, 
    \bea 
    ||\bm{A}||_2 &\le &2 (||\bm{A}_{11}||_2  + ||\bm{A}_{22}||_2 ).
    \eea

\end{lemma}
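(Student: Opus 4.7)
Since $\bm{A}$ is positive definite and symmetric, the plan is to use the variational characterization
\[
\|\bm{A}\|_2 = \sup_{\|\bm{x}\|_2 = 1} \bm{x}^T \bm{A} \bm{x},
\]
partition the unit vector $\bm{x}$ conformally as $\bm{x} = (\bm{x}_1^T, \bm{x}_2^T)^T$ with $\bm{x}_1 \in \bbR^{p_1}$ and $\bm{x}_2 \in \bbR^{p_2}$, and expand the quadratic form into diagonal blocks plus a cross term:
\[
\bm{x}^T \bm{A} \bm{x} = \bm{x}_1^T \bm{A}_{11} \bm{x}_1 + 2 \bm{x}_1^T \bm{A}_{12} \bm{x}_2 + \bm{x}_2^T \bm{A}_{22} \bm{x}_2 .
\]

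The first step is to bound the diagonal pieces trivially by $\|\bm{A}_{11}\|_2$ and $\|\bm{A}_{22}\|_2$ (using $\|\bm{x}_j\|_2 \le 1$). The main step is to control the cross term $2|\bm{x}_1^T \bm{A}_{12} \bm{x}_2|$; the plan here is to exploit positive definiteness of $\bm{A}$ to derive the standard inequality $\|\bm{A}_{12}\|_2 \le \|\bm{A}_{11}\|_2^{1/2}\|\bm{A}_{22}\|_2^{1/2}$. This follows by considering the quadratic $t \mapsto (t\bm{u}^T, \bm{v}^T) \bm{A} (t\bm{u}^T, \bm{v}^T)^T$ for arbitrary $\bm{u}, \bm{v}$, which is nonnegative in $t$, so its discriminant must be nonpositive, giving $|\bm{u}^T \bm{A}_{12} \bm{v}|^2 \le (\bm{u}^T \bm{A}_{11} \bm{u})(\bm{v}^T \bm{A}_{22} \bm{v})$; taking a supremum over unit $\bm{u}, \bm{v}$ yields the desired bound on $\|\bm{A}_{12}\|_2$.

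Finally I would apply AM-GM: $\|\bm{A}_{11}\|_2^{1/2} \|\bm{A}_{22}\|_2^{1/2} \le (\|\bm{A}_{11}\|_2 + \|\bm{A}_{22}\|_2)/2$. Putting everything together,
\[
\bm{x}^T \bm{A} \bm{x} \le \|\bm{A}_{11}\|_2 + 2\|\bm{A}_{12}\|_2 + \|\bm{A}_{22}\|_2 \le 2(\|\bm{A}_{11}\|_2 + \|\bm{A}_{22}\|_2),
\]
and taking the supremum over unit $\bm{x}$ gives the claim. There is no real obstacle; the only nonroutine ingredient is the discriminant trick for bounding $\|\bm{A}_{12}\|_2$ via positive definiteness of the full block matrix. (In fact the argument gives the sharper constant $3/2$, but the stated constant $2$ is all that is needed downstream.)
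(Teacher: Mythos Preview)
Your proof is correct. Both arguments start from the variational formula $\|\bm{A}\|_2=\sup_{\|\bm{x}\|_2=1}\bm{x}^T\bm{A}\bm{x}$ and partition $\bm{x}=(\bm{x}_1^T,\bm{x}_2^T)^T$, but they handle the cross term differently. The paper uses a parallelogram-type trick: writing $\bm{v}_j=\bm{x}\odot\bm{u}_j$ for the two block components, it adds the nonnegative quantity $(\bm{v}_1-\bm{v}_2)^T\bm{A}(\bm{v}_1-\bm{v}_2)$ to $\bm{x}^T\bm{A}\bm{x}=(\bm{v}_1+\bm{v}_2)^T\bm{A}(\bm{v}_1+\bm{v}_2)$, so the off-diagonal contributions cancel and one is left with $2\bm{v}_1^T\bm{A}\bm{v}_1+2\bm{v}_2^T\bm{A}\bm{v}_2\le 2\|\bm{A}_{11}\|_2+2\|\bm{A}_{22}\|_2$. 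Your route instead isolates the intermediate bound $\|\bm{A}_{12}\|_2\le\|\bm{A}_{11}\|_2^{1/2}\|\bm{A}_{22}\|_2^{1/2}$ via the discriminant and then applies AM--GM. The paper's argument is a one-line cancellation, while yours produces a reusable block Cauchy--Schwarz inequality along the way; in fact, if you keep the weights $\|\bm{x}_1\|_2^2+\|\bm{x}_2\|_2^2=1$ throughout and complete the square, your method sharpens further to constant $1$ (your parenthetical $3/2$ already uses $\|\bm{x}_1\|_2\|\bm{x}_2\|_2\le 1/2$). Either way, the constant $2$ stated in the lemma is all that is used downstream.
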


\begin{proof}
    Let $\bm{u}_1 = (\bm{1}_{p_1}^T ,\bm{0}_{p_2}^T)^T\in\bbR^p$ and $\bm{u}_2 = (\bm{0}_{p_1}^T ,\bm{1}_{p_2}^T)^T\in \bbR^p $. 
For any $\bm{v}\in \bbR^p$ with $||\bm{v}||_2 = 1$,
\bea 
\bm{v}^T \bm{A} \bm{v} &=& (\bm{v} \odot \bm{u}_1 + \bm{v} \odot \bm{u}_2 )^T \bm{A}(\bm{v} \odot \bm{u}_1 + \bm{v} \odot \bm{u}_2 )\\
&\le& (\bm{v} \odot \bm{u}_1 + \bm{v} \odot \bm{u}_2 )^T \bm{A}(\bm{v} \odot \bm{u}_1 + \bm{v} \odot \bm{u}_2 ) \\
&&+ (\bm{v} \odot \bm{u}_1 - \bm{v} \odot \bm{u}_2 )^T \bm{A}(\bm{v} \odot \bm{u}_1 -\bm{v} \odot \bm{u}_2 )\\
&= & 2(\bm{v} \odot \bm{u}_1 )^T \bm{A}(\bm{v} \odot \bm{u}_1 ) +2(\bm{v} \odot \bm{u}_2)^T \bm{A}(\bm{v} \odot \bm{u}_2 )\\
&\le& 2||\bm{A}_{11}||_2 + 2 ||\bm{A}_{22}||_2 ,
\eea 
where $\odot$ represents the Hadamard product.
Thus, 
\bea 
||\bm{A}||_2 \le 2||\bm{A}_{11}||_2 + 2 ||\bm{A}_{22}||_2 .
\eea

\end{proof}

\begin{lemma}\label{lemma:IWconcent}
    Suppose $\bm{Z}_1,\ldots, \bm{Z}_n$ are independent sub-Gaussian random vector with $E(\bm{Z}_i) = \bm{0} $ and $Var(\bm{Z}_i) = \bm{I}_p$, and consider the distribution of $\bm\Sigma$ given $\bbZ_n= (\bm{Z}_1,\ldots, \bm{Z}_n)$ as
    \bea 
    \bm\Sigma\mid \bbZ_n \sim  IW_p\left( \sum_{i=1}^n \bm{Z}_i \bm{Z}_i^T + \bm{A}_n , n+\nu_n \right).
    \eea 
    Let $\pi(\cdot\mid \bbZ_n)$ and $P(\cdot)$ denote the probabilities of $\bm\Sigma\mid \bbZ_n$ and $\bbZ_n$, respectively.
    If $||\bm{A}_n||=o(n)$, $\nu_n-2p = o(n)$ and $\nu_n>2p+1$, then there exists positive constants $C_1$, $C_2$, $C_3$ and $C_4$ such that
    \bea 
    P\left( \pi \left(  ||\bm\Sigma ||_2 \ge C_1 \Big(\frac{p}{n} \vee 1 \Big) +\epsilon_n\Bigm| \bbZ_n \right) >\delta_n \right) \le C_2\exp\{ -C_3n \min (\epsilon_n,\epsilon_n^2 ) \},
    \eea 
    for all $\epsilon_n>0$ and all sufficiently large $n$, where $\delta_n = 4 \exp \left( -C_4\frac{(n+\nu_n-2p-1)^2}{n+\nu_n-p-m-1} \right) + 4\exp(-C_4n)$.% and $x_n = C(p/n + \sqrt{p/n})$.
\end{lemma}

\begin{proof}[Proof of Lemma \ref{lemma:IWconcent}]
    Let $\bm{S}_n=\sum_{i=1}^n \bm{Z}_i \bm{Z}_i^T /n$.
    Let $c_1$ denote a positive constant to be determined in this proof. We have
    \bean 
&&    P \left( \pi\left( ||\bm\Sigma ||_2 >  C_1\Big(\frac{p}{n}\vee 1\Big) +\epsilon_n\Bigm|\bbZ_n \right) >  \delta_n \right)   \nonumber\\&\le&
    P\left( ||\bm{S}_n ||_2 > c_1\Big(\frac{p}{n}\vee 1\Big) + \epsilon_n \right) \nonumber\\
    &&+ P\left(||\bm{S}_n ||_2 \le c_1\Big(\frac{p}{n}\vee 1\Big) + \epsilon_n ,\pi\left( ||\bm\Sigma ||_2 >  C_1\Big(\frac{p}{n}\vee 1\Big) + \epsilon_n \Bigm| \bbZ_n \right) >  \delta_n \right)  ,\label{eq:Sigmaupper1}
    \eean 
    and 
    \bea 
    P\left( ||\bm{S}_n ||_2 > c_1(\frac{p}{n}\vee 1)+\epsilon_n \right) &\le& 
     P\left( ||\bm{S}_n -\bm{I}_p||_2 > c_1(\frac{p}{n}\vee 1)-1 + \epsilon_n \right) \\
     &\le& P\left( ||\bm{S}_n -\bm{I}_p||_2 > \frac{c_1-1}{2}\Big(\frac{p}{n} + \sqrt{\frac{p}{n}} \Big) +\epsilon_n \right)\\ 
    &\le& C_2\exp\{ -C_3n \min ( \epsilon_n,\epsilon_n^2 ) \},
    \eea 
    for some positive constants $C_2$ and $C_3$, where the last inequality is satisfied by Theorem 6.5 in \cite{wainwright2019high} by setting $c_1$ larger than the positive constant appears in Theorem 6.5 of \cite{wainwright2019high}.
    The second inequality is satisfied by setting the constant $c_1$ to be larger than $1$.
    When $p>n$ and $c_1>1$,
    \bea 
     c_1(\frac{p}{n}\vee 1)-1  &\ge& c_1p/n   - p/n \\
     &=& \frac{(c_1-1)}{2} 2p/n\\
     &\ge&\frac{(c_1-1)}{2} (p/n  + \sqrt{p/n}),
    \eea 
    and, when $p\le n$ and $c_1>1$,
    \bea 
c_1(\frac{p}{n}\vee 1)-1  &=& \frac{(c_1-1)}{2} 2 \\
&\ge& \frac{(c_1-1)}{2} (p/n  + \sqrt{p/n}) .
    \eea

%    \bea 
%    \bm\Sigma \sim IW_p( n\bm{S}_n + \bm{A}_n , \nu_n+n)
%    \eea 
Next, we show $\eqref{eq:Sigmaupper1}=0$ for all sufficiently large $n$.
    Let $m$ be the number of non-zero eigenvalues of $\bm{S}_n$ and $m\le n$. 
    Let $\bm{S}_n = \bm{\hat{U}} \bm{\hat\Lambda}\bm{\hat{U}}^T $ by the spectral decomposition, where $\bm{\hat\Lambda} = \textup{diag}( \bm{\hat\Lambda}^{(1)}, \bm{O}_{p-m})$ and $\bm{\hat\Lambda}^{(1)}$ is a diagonal matrix consists of the nonzero eigenvalues of $\bm{S}_n$. 
    Let $\bm{\hat{U}}  = [ \bm{\hat{U}}^{(1)},\bm{\hat{U}}^{(2)}]  $ with $\bm{\hat{U}}^{(1)}\in \bbR^{p\times m}$.
    Since, by Lemma \ref{lemma:blocknorm0},%when $|| ||\le x$,
    \bea 
    \pi\left( ||\bm\Sigma ||_2 > C_1(\frac{p}{n}\vee 1)+\epsilon_n\Bigm|\bbZ_n \right) &\le& 
    \pi\left(  || \bm{(\hat{U}}^{(1)})^T \bm\Sigma \bm{\hat{U}}^{(1)}||_2 >C_1(\frac{p}{n}\vee 1)/4 +\epsilon_n/4\Bigm|\bbZ_n \right)  \\
    &&+ \pi \left(|| \bm{(\hat{U}}^{(2)})^T \bm\Sigma \bm{\hat{U}}^{(2)}||_2> C_1(\frac{p}{n}\vee 1)/4 +\epsilon_n/4\Bigm|\bbZ_n \right) ,
    \eea 
    we have
    \bean 
   \eqref{eq:Sigmaupper1}
    &\le& P\left(||\bm{S}_n || \le  c_1\Big(\frac{p}{n}\vee 1\Big) , \nonumber \right.\\
    &&~~~~\left.\pi\left(  || \bm{(\hat{U}}^{(1)})^T \bm\Sigma \bm{\hat{U}}^{(1)}||_2 >C_1(\frac{p}{n}\vee 1)/4 +\epsilon_n/4\Bigm|\bbZ_n \right)  > \delta_n/2 \right) \label{eq:upperU1}\\
    &&+ P\left(  \pi\left(  || \bm{(\hat{U}}^{(2)})^T \bm\Sigma \bm{\hat{U}}^{(2)}||_2 >C_1(\frac{p}{n}\vee 1)/4 +\epsilon_n/4\Bigm|\bbZ_n \right)  > \delta_n/2 \right).\label{eq:upperU2}
    \eean 

    First, we show $\eqref{eq:upperU1}=0$ for all sufficiently large $n$.
    We have
    \bea 
    \bm{(\hat{U}}^{(1)})^T \bm\Sigma \bm{\hat{U}}^{(1)} \mid\bbZ_n\sim IW_m \left( n \bm{\hat\Lambda}^{(1)} + (\bm{\hat{U}}^{(1)})^T \bm{A}_n \bm{\hat{U}}^{(1)} , n+\nu_n - 2p+2m \right).
    \eea 
    The spectral decomposition gives
    \bea 
\bm{\hat\Lambda}^{(1)} + (\bm{\hat{U}}^{(1)})^T \bm{A}_n \bm{\hat{U}}^{(1)}/n &=&
\tilde{\bm{U}} \tilde{\bm\Lambda} \tilde{\bm{U}}^T .
    \eea 
    Let $\bm\Omega_1$ be a random matrix with $\bm\Omega_1 \sim W_m((n+\nu_n-2p+m-1)^{-1}\bm{I}_m,n+\nu_n-2p+2m)$. 
    Then, 
    \bea 
    \bm{(\hat{U}}^{(1)})^T \bm\Sigma \bm{\hat{U}}^{(1)}  \equiv 
    \frac{n}{n+\nu_n-2p+m-1}\tilde{\bm{U}} \tilde{\bm\Lambda}^{1/2}\bm\Omega_1^{-1}\tilde{\bm\Lambda}^{1/2}\tilde{\bm{U}}^T,
    \eea 
    where $\equiv$ denotes equality in distribution.
    
    When $||\bm{S}_n||_2 \le c_1\Big(\frac{p}{n}\vee 1\Big) + \epsilon_n$ and $||\bm{A}_n||_2 = o(n)$, 
    \bea 
    ||\tilde{\bm\Lambda} ||_2 \le ||\bm{S}_n||_2 + ||\bm{A}_n||_2/n \le  2c_1(p/n \vee 1 +\epsilon_n),
    \eea 
    for all sufficiently large $n$. Then, there exists a positive constant $c_2$ such that
        \bea 
    &&\pi\left(||\bm{(\hat{U}}^{(1)})^T \bm\Sigma \bm{\hat{U}}^{(1)}||_2 >  C_1\left(\frac{p}{n}\vee 1 + \epsilon_n \right)/4\Bigm|\bbZ_n \right)  \\
    &\le& 
    \pi\left( 2c_1\left(\frac{p}{n}\vee 1 +\epsilon_n \right) \frac{n}{n+\nu_n-2p + m-1} ||\bm\Omega_1^{-1} ||_2 > C_1\left(\frac{p}{n}\vee 1 + \epsilon_n \right)/4 \Bigm|\bbZ_n \right) \\
    &\le& \pi\left( ||\bm\Omega_1^{-1} ||_2 >C_1/(8c_1) \mid\bbZ_n \right) \\
    &\le& \pi \left(\lambda_{\min} (\bm\Omega_1) < 8c_1/C_1 \mid\bbZ_n \right) \\
    &\le& 2\exp( -c_2n ) ,
    \eea 
    for all sufficiently large $n$, where the last inequality is satisfied by Lemma B.7 in \cite{lee2018optimal}.
    To apply the lemma, we set $C_1$ to satisfy $C_1 >(8c_1)4/(1-1/\sqrt{2})^2$, which gives $8c_1 /C_1\le (1-1/\sqrt{2})^2/4 \le ( 1- \sqrt{m/ (n+\nu_n-2p+m-1)})^2/4$ for all sufficiently large $n$ because $m/ (n+\nu_n-2p+m-1) \le m/(2m) =1/2 $.
    Thus, 
    \bea 
     \eqref{eq:upperU1} &\le&
      P\left(2\exp(-c_2n)\ge \pi\left(|| \bm{(\hat{U}}^{(1)})^T \bm\Sigma \bm{\hat{U}}^{(1)}||_2 >C_1\left(\frac{p}{n}\vee 1 \right)/4 +\epsilon_n/4\Bigm|\bbZ_n \right)  > \delta_n/2 \right),
    \eea 
    for all sufficiently large $n$, and this becomes $0$ by setting $\delta_n > 4\exp(-c_2n)$.
    
    %By setting $C_4$ in the definition of $\delta_n$ large enough, $\delta_n > 2\exp( -c_3n )$ is satisfied for all sufficiently large $n$, which gives $\eqref{main-eq:upperU1} = 0$ for all sufficiently large $n$.

    Next, we show $\eqref{eq:upperU2}=0$ for all sufficiently large $n$.
    When $n\ge p$, $m= p$ with probability $1$. Thus, it suffices to show $\eqref{eq:upperU2}=0$ only when $p>n$.
    We have
    \bea 
(\bm{\hat{U}}^{(2)})^T \bm\Sigma \bm{\hat{U}}^{(2)} \mid\bbZ_n\sim IW_{p-m} \left( (\bm{\hat{U}}^{(2)})^T \bm{A}_n \bm{\hat{U}}^{(2)} , n+\nu_n - 2m \right).
    \eea 
    Let 
    \bea 
    \bm\Omega_2 \sim W_{p-m}\left( (n+\nu_n-p-m-1)^{-1} \bm{I}_{p-m},n+\nu_n-2m \right).
    \eea 
    We have
    \bea 
    \frac{1}{(n+\nu_n-p-m-1)}\{(\bm{\hat{U}}^{(2)})^T \bm{A}_n \bm{\hat{U}}^{(2)}\}^{1/2}\bm\Omega_2^{-1} \{(\bm{\hat{U}}^{(2)})^T \bm{A}_n \bm{\hat{U}}^{(2)}\}^{1/2} \equiv (\bm{\hat{U}}^{(2)})^T \bm\Sigma \bm{\hat{U}}^{(2)}.
    \eea 
    Thus,
    \bea 
    &&\pi\left(  || \bm{(\hat{U}}^{(2)})^T \bm\Sigma \bm{\hat{U}}^{(2)}||_2 >C_1\left(\frac{p}{n}\vee 1 \right)/4 +\epsilon_n/4\Bigm|\bbZ_n\right) \\
    &\le &\pi\left( ||\bm\Omega_2^{-1} ||_2 > C_1\left(\frac{p}{n}\vee 1 \right)\frac{n+\nu_n -p- m-1}{ ||\bm{A}_n||_2 }\Bigm|\bbZ_n \right) \\
    &\le &\pi \left(\lambda_{\min} (\bm\Omega_2) <C_1^{-1} \left(\frac{p}{n}\vee 1 \right)^{-1}\frac{ ||\bm{A}_n||_2 }{n+\nu_n -p- m-1}\Bigm|\bbZ_n \right).
    \eea 
We have
    \bea 
    \left(\frac{p}{n}\vee 1 \right)^{-1}\frac{ ||\bm{A}_n||_2 }{n+\nu_n -p-m-1} &=&
    \frac{ n^2||\bm{A}_n||_2/n }{p (n+\nu_n-p-m-1)} \\
    &\le& \frac{ n^2 }{(n+\nu_n-p-m-1)^2} \\
    &\le& \Big(\frac{ n+\nu_n-2p-1}{n+\nu_n-p-m-1}\Big)^2,
    \eea 
    for all sufficiently large $n$,
    and
    \bea 
    1-\sqrt{(p-m)/(n+\nu_n-p-m-1)} &=&
    \frac{1-(p-m)/(n+\nu_n-p-m-1)}{1+\sqrt{(p-m)/(n+\nu_n-p-m-1)} } \\
    &=& \frac{(n+\nu_n-2p-1)/(n+\nu_n-p-m-1)}{1+\sqrt{(p-m)/(n+\nu_n-p-m-1)} } \\
    &\ge& \frac{1}{2}\frac{n+\nu_n-2p-1}{n+\nu_n-p-m-1}.
    \eea 
    Then,
    \bea 
    C_1^{-1} \left(\frac{p}{n}\vee 1 \right)^{-1}\frac{ ||\bm{A}_n||_2 }{n+\nu_n -p- m-1} \le 
    C_1^{-1}4 \left(    1-\sqrt{(p-m)/(n+\nu_n-p-m-1)} \right)^2.
    \eea 
    When $C_1 > 16$, by Lemma B.7 in \cite{lee2018optimal},
    \bea 
    && \pi \left(\lambda_{\min} (\bm\Omega^{-1}) <C_1^{-1} \left(\frac{p}{n}\vee 1+\epsilon_n \right)^{-1}\frac{ ||\bm{A}_n||_2 }{n+\nu_n -p- m-1} \right)\\
    &\le& 2\exp( - (n+\nu_n-p-m-1) ( 1- \sqrt{(p-m)/(n+\nu_n-p-m-1)})^2/8)\\
     &\le& 2 \exp \left( -\frac{(n+\nu_n-2p-1)^2}{32(n+\nu_n-p-m-1)} \right),
    \eea 
    for all sufficiently large $n$. 
    Thus,
    \bea 
\eqref{eq:upperU2} &\le& P \left(  2 \exp \left( -\frac{(n+\nu_n-2p-1)^2}{32(n+\nu_n-p-m-1)} \right)  \right. \\
&&~~~~~~~\left.\ge \pi\left( || \bm{(\hat{U}}^{(2)})^T \bm\Sigma \bm{\hat{U}}^{(2)}||_2 >C_1\left(\frac{p}{n}\vee 1 \right)/4 +\epsilon_n/4\Bigm|\bbZ_n \right) \ge \delta_n/2 \right) .
    \eea 
    By setting $\delta_n/2 \ge 2 \exp \left( -\frac{(n+\nu_n-2p-1)^2}{32(n+\nu_n-p-m-1)} \right)$, $\eqref{eq:upperU2}=0$ for all sufficiently large $n$.

\end{proof}

\begin{lemma}\label{lem:IWeigenvalue}
Suppose 
\bea 
\bm\Sigma\mid \bbX_n &\sim & IW_p\left( \sum \bm{X}_i\bm{X}_i^T + \bm{A}_n , n+\nu_n \right).\\
\bm{X}_i &\stackrel{iid}{\sim} & N_p(\bm{0},\bm{\Sigma}_0 ) ,~i=1,\ldots, n.
\eea 
If $p/n^2=o(1)$, $||\bm\Sigma_0^{-1/2}\bm{A}_n \bm\Sigma_0^{-1/2}||_2 = o(n)$, $\nu_n-2p =o(n)$, $\nu_n>2p+1$ and $K =o(n)$, then
\bea 
\pi\left( ||\bm\Gamma_\perp^T \bm\Sigma_0^{-1/2}\bm\Sigma \bm\Sigma_0^{-1/2} \bm\Gamma_\perp ||^{1/2}\frac{\sqrt{\lambda_{0,K+1}}}{\sqrt{\lambda_{0,k}} } > M_n\frac{\sqrt{\lambda_{0,K+1}}}{\sqrt{\lambda_{0,k}} } \Big( \sqrt{\frac{p}{n}}\vee 1 \Big) \Bigm|\bbX_n \right) 
\eea 
and 
\bea 
\pi\left( ||  \bm\Gamma^T \bm\Sigma_0^{-1/2}\bm\Sigma \bm\Sigma_0^{-1/2}\bm\Gamma -\bm{I}_K||_2 > M_n\sqrt{\frac{K}{n}} \Bigm|\bbX_n \right) 
\eea 
converges to $0$ in probability for any positive sequence $M_n$ with $M_n\lra \infty$.

\end{lemma} 

\begin{proof}
We have
    \bea 
     \bm\Sigma_0^{-1/2} \bm\Sigma\bm\Sigma_0^{-1/2} \mid\bbX_n  &\sim& IW_{p} \left( \sum_{i=1}^n \bm{Z}_i\bm{Z}_i^T + \bm\Sigma_0^{-1/2}\bm{A}_n \bm\Sigma_0^{-1/2} ,n + \nu_n \right),\\
    \bm{Z}_i = \bm\Sigma_0^{-1/2} \bm{X}_i 
    &\sim&  N_{p} (\bm{0}, \bm{I}_p).
    \eea 
    
    We have
    \bea%n\label{eq:gammaperp}
&&\pi\left( ||\bm\Gamma_\perp^T \bm\Sigma_0^{-1/2}\bm\Sigma \bm\Sigma_0^{-1/2} \bm\Gamma_\perp ||^{1/2}\frac{\sqrt{\lambda_{0,K+1}}}{\sqrt{\lambda_{0,k}} } >  M_n\frac{\sqrt{\lambda_{0,K+1}}}{\sqrt{\lambda_{0,k}} } \Big( \sqrt{\frac{p}{n}}\vee 1 \Big) \Bigm| \bbX_n \right) \\
&\le& \pi\left( ||\bm\Gamma_\perp^T \bm\Sigma_0^{-1/2}\bm\Sigma \bm\Sigma_0^{-1/2} \bm\Gamma_\perp || >  M_n^2\Big(\frac{p}{n}\vee 1 \Big)\Bigm|\bbX_n\right).
    \eea%n
    By Lemma \ref{lemma:IWconcent},
    \bea 
    P\left(\pi\left( ||\bm\Gamma_\perp^T \bm\Sigma_0^{-1/2}\bm\Sigma \bm\Sigma_0^{-1/2} \bm\Gamma_\perp || >  M_n^2\Big(\frac{p}{n}\vee 1 \Big)\Bigm|\bbX_n \right) >\delta_n \right)
    \eea 
    converges to $0$, where $\delta_n = 2 \exp \left( -C_1\frac{(n+\nu_n-2p-1)^2}{n+\nu_n-p-m-1} \right) + 2\exp(-C_1n)$ for some positive constant $C_1$.
    Since $\delta_n\longrightarrow 0$, $\pi\left( ||\bm\Gamma_\perp^T \bm\Sigma_0^{-1/2}\bm\Sigma \bm\Sigma_0^{-1/2} \bm\Gamma_\perp || >  M_n^2\Big(\frac{p}{n}\vee 1 \Big)\Bigm|\bbX_n \right) $ converges to $0$ in probability.
    
Next, we have
\bea 
    \bm\Gamma^T\bm\Sigma_0^{-1/2} \bm\Sigma\bm\Sigma_0^{-1/2}\bm\Gamma \mid\bbX_n &\sim& IW_{K} \left(   \sum_{i=1}^n \bm\Gamma^T\bm{Z}_i\bm{Z}_i^T\bm\Gamma +\bm\Gamma^T\bm\Sigma_0^{-1/2}\bm{A}_n \bm\Sigma_0^{-1/2} \bm\Gamma,n+\nu_n -p +K \right), \\
    \bm\Gamma^T\bm{Z}_i 
    &\sim&  N_{K} (\bm{0}, \bm{I}_K).
    \eea   
    By Theorem 1 in \cite{lee2018optimal}, we have
    \bea%n\label{eq:gamma1} 
    P\left(\pi \left(||\bm\Gamma^T\bm\Sigma_0^{-1/2} \bm\Sigma\bm\Sigma_0^{-1/2}\bm\Gamma -\bm{I}_K||_2  >M_n \sqrt{K/n}\Bigm|\bbX_n\right) \right)&\lesssim& 
      \frac{1}{M_n^2}.
    \eea%n 
    Thus,  
    \bea 
     \pi\left( ||  \bm\Gamma^T \bm\Sigma_0^{-1/2}\bm\Sigma \bm\Sigma_0^{-1/2}\bm\Gamma -\bm{I}_K||_2 > M_n\sqrt{\frac{K}{n}} \Bigm| \bbX_n \right) 
     \eea 
     converges to $0$ in probability.

\end{proof}

\begin{proof}[Proof of Theorem \ref{corr:IWeigenvalue}]

%First, we consider the case when $\nu_n$ is \eqref{main-eq:biasnu}. By the definition of \eqref{main-eq:biasnu}, we have $\nu_n -2p = o(n)$, which enables us to apply Lemma \ref{lem:IWeigenvalue} in the following steps.
 
We show \eqref{eq:IWeigen1} by the following steps.
    For arbitrary $M_n$ with $M_n\lra \infty$, let $\tilde{M}_n = M_n \wedge 1/\sqrt{\epsilon_n}$. Then, $\tilde{M_n}\le M_n$ and $\tilde{M}_n \epsilon_n \lra 0$. 
    By Theorem \ref{thm:higheigen} with $t=\tilde{M}_n\epsilon_n$, there exists a positive constant $C_1$ such that
    \bean 
      &&\pi\left( \sup_{l=1,\ldots, k}\left|\frac{\lambda_l(\bm\Sigma) }{\lambda_{0,l}} -1 \right|  > M_n\epsilon_n \Bigm|\bbX_n \right) \nonumber\\
    &\le&
    \pi\left( \sup_{l=1,\ldots, k}\left|\frac{\lambda_l(\bm\Sigma) }{\lambda_{0,l}} -1 \right|  > \tilde{M_n}\epsilon_n \Bigm|\bbX_n \right) \nonumber\\
    &\le& 
    \pi \left( ||\bm\Gamma_\perp^T \bm\Sigma_0^{-1/2}\bm\Sigma \bm\Sigma_0^{-1/2} \bm\Gamma_\perp ||^{1/2}\frac{\sqrt{\lambda_{0,K+1}}}{\sqrt{\lambda_{0,k}} } > C_1\tilde{M}_n\frac{\sqrt{\lambda_{0,K+1}}}{\sqrt{\lambda_{0,k}} } \Big( \sqrt{\frac{p}{n}}\vee 1 \Big) \Bigm| \bbX_n \right) \nonumber\\
    &&+  \pi \left( K||  \bm\Gamma^T \bm\Sigma_0^{-1/2}\bm\Sigma \bm\Sigma_0^{-1/2}\bm\Gamma -\bm{I}_K||_2 > C_1\tilde{M}_n\sqrt{\frac{K^3}{n}} \Bigm|\bbX_n \right) ,\label{eq:biasnu1}
    %P( \frac{||\bm\Gamma_\perp^T \bm\Sigma \bm\Gamma_\perp ||^{1/2}}{\sqrt{\lambda_{0,k}} } > Ct) \\
    %&&+  P( K|| \bm{D}_0^{-1/2} \bm\Gamma^T\bm\Sigma\bm\Gamma \bm{D}_0^{-1/2} -\bm{I}_K||_2 > Ct) , 
    \eean 
    for all sufficiently large $n$. 
    Here, Theorem \ref{thm:higheigen} can be applied because $\tilde{M_n}\epsilon_n$ is smaller than any arbitrary positive constant for all sufficiently large $n$.
    The upper bound converges to $0$ in probability by Lemma \ref{lem:IWeigenvalue}.
    
    Next, we consider the posterior contraction rate of $\lambda_l^{adj}(\bm\Sigma)$ with $l=1,\ldots, k$, i.e., we show \eqref{eq:IWeigen2}.
    We have 
    \bean 
      &&\pi\left( \sup_{l=1,\ldots, k}\left|\frac{\lambda_l^{adj}(\bm\Sigma) }{\lambda_{0,l}} -1 \right|  > M_n\epsilon_n^{(2)} \Bigm|\bbX_n \right) \nonumber\\
      &\le& \pi\left( \sup_{l=1,\ldots, k} 
\left|\frac{\lambda_l(\bm\Sigma) }{\lambda_{0,l}} -1 \right|  > \tilde M_n\epsilon_n /4\Bigm|\bbX_n \right) \label{eq:adjupper1}\\
&&+\pi\left( \sup_{l=1,\ldots, k} \left|\frac{\gamma_2(\lambda_k(\bm S_n),\hat c)}{\tilde\gamma_1(\nu_n,\lambda_k(\bm S_n),\hat c)}
-1 \right|  > M_n\epsilon_n^{(2)} /2\Bigm|\bbX_n \right)\label{eq:adjupper2},
    \eean 
    for all sufficiently large $n$. The convergence of \eqref{eq:adjupper1} can be shown by the following the steps of \eqref{eq:biasnu1}.

    Since $\frac{(p-K)\hat c}{n\lambda_k(\bm S_n)} - \frac{(p-K)\bar c}{n\lambda_k(\bm\Sigma_0)}  = O_p(n^{-1})$ (See Lemma 7 in \cite{yata2012effective} and \cite{wang2017asymptotics}),
    we have 
    \bea 
    \tilde\gamma_1(\nu_n,\lambda_k(\bm S_n),\hat c) = \frac{n}{n+\nu_n-2p-2} \Big[ 1+ \frac{(p-K)\hat c}{(n+\nu_n-2p-2)\lambda_k (\bm S_n)} \Big]= O_p(1),
    \eea 
    and
\bea 
&&\Big|\frac{\gamma_2(\lambda_k(\bm S_n),\hat c)}{\tilde\gamma_1(\nu_n,\lambda_k(\bm S_n),\hat c)}
-1 \Big| \\
&=& \frac{1}{\tilde\gamma_1(\nu_n,\lambda_k(\bm S_n),\hat c)} \Big| \frac{\nu_n-2p-2}{n+\nu_2-2p-2} - \frac{\hat c p}{n\lambda_k(\bm S_n)}  - \frac{n}{n+\nu_n-2p-2} \frac{(p-K)\hat c}{(n+\nu_n-2p-2)\lambda_k(\bm S_n)} \Big|\\
&\lesssim& \frac{\nu_n-2p-2}{n+\nu_2-2p-2} + \frac{\bar c p}{n\lambda_k(\bm\Sigma_0)} .
\eea 
Since $\epsilon_n^{(2)}$ has the term $\frac{\nu_n-2p-2}{n+\nu_2-2p-2}  + \frac{\bar c p}{n\lambda_{0,k}}$, 
\eqref{eq:adjupper2} converges to $0$ in probability.

\end{proof}

\section{Proof of Theorem \ref{thm:highevector}} \label{sec:pf3.5}
We give the proof of Theorem \ref{thm:highevector}. For the proof of Theorem \ref{thm:highevector}, we provide Lemmas \ref{lemma:evector}-\ref{lemma:A-B}.

\begin{lemma}\label{lemma:evector}
    Let $\bm\Sigma$ and $\bm\Sigma_0$ denote $p\times p$ positive-definite matrices.
    Let $\lambda_k$ and $\lambda_{0,k}$ denote the $k$th eigenvalues of $\bm\Sigma$ and $\bm\Sigma_0$, respectively, and let $\bm{u}_k$ and $\bm{u}_{0,k}$ denote the corresponding eigenvectors. 
    If $\lambda_{0,k} ||\bm\Lambda_{0,-k}^{-1/2}(\bm{U}_{0,-k}^T \bm\Omega \bm{U}_{0,-k} - \lambda_k \bm\Lambda_{0,-k}^{-1})^{-1} \bm{U}_{0,-k}^T\bm\Omega  \bm{u}_{0,k}||_2^2 <1$, 
    then
    \bea 
   (\bm{u}_{k}^T \bm{u}_{0,k})^2 = 
    %\frac{1}{1 + \bm{u}_{0,k}^T\bm\Sigma \bm{U}_{0,-k} ( \bm{U}_{0,-k}^T\bm\Sigma \bm{U}_{0,-k} - \lambda_k \bm{I}_{p-1})^{-2} 
 %\bm{U}_{0,-k}^T\bm\Sigma
%\bm{u}_{0,k}},
    %\frac{1}{1+ \lambda_{0,k} \bm{u}_{0,k}^T \bm\Omega \bm{U}_{0,-k} \bm\Lambda_{0,-k}^{1/2} (\bm\Lambda_{0,-k}^{1/2} \bm{U}_{0,-k}^T \bm\Omega \bm{U}_{0,-k}\bm\Lambda_{0,-k}^{1/2} -\lambda_k \bm{I}_{p-1})^{-2} \bm\Lambda_{0,-k}^{1/2} \bm{U}_{0,-k}^T \bm\Omega  \bm{u}_{0,k}  }, 
    %\frac{1}{1+ \lambda_{0,k} || (\bm{B}_k^T \bm\Omega\bm{B}_k -\lambda_k \bm{I}_{p-1})^{-1} \bm{B}_k^T \bm\Omega  \bm{u}_{0,k}||_2^2  }, 
    \frac{1}{1 +  || \sqrt{\lambda_{0,k}}\bm\Lambda_{0,-k}^{-1/2}(\bm{U}_{0,-k}^T \bm\Omega \bm{U}_{0,-k} - \lambda_k \bm\Lambda_{0,-k}^{-1})^{-1} \bm{U}_{0,-k}^T \bm\Omega \bm{u}_{0,k}||_2^2}
    \eea 
    where 
      $\bm\Omega=\bm\Sigma_0^{-1/2} \bm\Sigma \bm\Sigma_0^{-1/2} $,
$\bm{U}_{0,-k} = [\bm{u}_{0,1},\ldots,\bm{u}_{0,k-1},\bm{u}_{0,k+1},\ldots, \bm{u}_{0,p}] \in \bbR^{p\times (p-1)}$ and $\bm\Lambda_{0,-k} = \textup{diag}(\lambda_{0,1},\ldots, \lambda_{0,k-1},\lambda_{0,k+1},\ldots, \lambda_{0,p})$.
\end{lemma}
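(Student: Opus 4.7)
The plan is to reduce the identity to a coordinate calculation in the eigenbasis of $\bm\Sigma_0$ and then exploit the eigenvalue equation $\bm\Sigma \bm{u}_k = \lambda_k \bm{u}_k$ written in the transformed form
\[
\bm\Omega \, \bm\Sigma_0^{1/2}\bm{u}_k \;=\; \lambda_k \, \bm\Sigma_0^{-1/2}\bm{u}_k ,
\]
which is obtained by substituting $\bm\Sigma = \bm\Sigma_0^{1/2}\bm\Omega\bm\Sigma_0^{1/2}$ and multiplying by $\bm\Sigma_0^{-1/2}$ on the left.

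First, since $[\bm{u}_{0,k}\;\bm{U}_{0,-k}]$ is an orthonormal basis of $\mathbb{R}^p$, I would write $\bm{u}_k = \alpha_k\bm{u}_{0,k} + \bm{U}_{0,-k}\bm\alpha_{-k}$ for scalars $\alpha_k$ and a vector $\bm\alpha_{-k}\in\mathbb{R}^{p-1}$. The unit-norm constraint gives $\alpha_k^2 + \|\bm\alpha_{-k}\|_2^2 = 1$, and $(\bm{u}_k^T\bm{u}_{0,k})^2 = \alpha_k^2$. Provided $\alpha_k\neq 0$, this yields the identity
\[
(\bm{u}_k^T\bm{u}_{0,k})^2 \;=\; \frac{1}{1 + \|\bm\alpha_{-k}/\alpha_k\|_2^2},
\]
so the whole proof reduces to identifying $\|\bm\alpha_{-k}/\alpha_k\|_2^2$ with the claimed expression.

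Next, I would apply the spectral identities $\bm\Sigma_0^{1/2}\bm{u}_{0,k}=\sqrt{\lambda_{0,k}}\bm{u}_{0,k}$, $\bm\Sigma_0^{1/2}\bm{U}_{0,-k}=\bm{U}_{0,-k}\bm\Lambda_{0,-k}^{1/2}$, and their inverses, and project the transformed eigenvalue equation onto $\bm{U}_{0,-k}^T$. After the substitution $\bm\gamma := \bm\Lambda_{0,-k}^{1/2}\bm\alpha_{-k}$, this projection collapses to the linear system
\[
\bigl(\bm{U}_{0,-k}^T\bm\Omega\bm{U}_{0,-k} - \lambda_k\bm\Lambda_{0,-k}^{-1}\bigr)\bm\gamma \;=\; -\sqrt{\lambda_{0,k}}\,\alpha_k\,\bm{U}_{0,-k}^T\bm\Omega\bm{u}_{0,k}.
\]
Solving for $\bm\gamma$ and converting back via $\bm\alpha_{-k} = \bm\Lambda_{0,-k}^{-1/2}\bm\gamma$ yields
\[
\bm\alpha_{-k}/\alpha_k \;=\; -\sqrt{\lambda_{0,k}}\,\bm\Lambda_{0,-k}^{-1/2}\bigl(\bm{U}_{0,-k}^T\bm\Omega\bm{U}_{0,-k} - \lambda_k\bm\Lambda_{0,-k}^{-1}\bigr)^{-1}\bm{U}_{0,-k}^T\bm\Omega\bm{u}_{0,k},
\]
and taking squared norms gives exactly the claimed formula.

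The main (mild) obstacle is the well-posedness of this inversion. Implicit in the hypothesis is that the matrix $\bm{U}_{0,-k}^T\bm\Omega\bm{U}_{0,-k} - \lambda_k\bm\Lambda_{0,-k}^{-1}$ is invertible; granting this, the stated bound $\lambda_{0,k}\|\bm\Lambda_{0,-k}^{-1/2}(\cdots)^{-1}\bm{U}_{0,-k}^T\bm\Omega\bm{u}_{0,k}\|_2^2<1$ translates to $\|\bm\alpha_{-k}\|_2^2<\alpha_k^2$, which in particular rules out $\alpha_k=0$ and thereby justifies the division by $\alpha_k$ used in the second paragraph. No deeper estimate is needed; the entire argument is algebraic manipulation of the eigenvalue equation in the chosen basis.
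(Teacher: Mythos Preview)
Your argument is correct and considerably more direct than the paper's. The paper proceeds via complex analysis: it writes the resolvent $(\bm\Sigma-z\bm{I}_p)^{-1}$ in the eigenbasis of $\bm\Sigma_0$, integrates $\bm{u}_{0,k}^T(\bm\Sigma-z\bm{I}_p)^{-1}\bm{u}_{0,k}$ around a contour enclosing only $\lambda_k$, applies Cauchy's residue theorem to extract $(\bm{u}_k^T\bm{u}_{0,k})^2$, and then identifies the residue as $1/h'(\lambda_k)$ for an explicit function $h$ obtained from the Schur complement of the resolvent block. The hypothesis $\lambda_{0,k}\|\cdots\|_2^2<1$ enters there to guarantee $h'(\lambda_k)\neq 0$, i.e.\ that $\lambda_k$ is a simple zero of $h$. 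Your route---decompose $\bm{u}_k$ in the $\bm\Sigma_0$-eigenbasis and project the eigenvalue equation $\bm\Omega\,\bm\Sigma_0^{1/2}\bm{u}_k=\lambda_k\,\bm\Sigma_0^{-1/2}\bm{u}_k$ onto $\bm{U}_{0,-k}^T$---reaches the same endpoint with only linear algebra, and in fact shows the identity holds whenever $\bm{U}_{0,-k}^T\bm\Omega\bm{U}_{0,-k}-\lambda_k\bm\Lambda_{0,-k}^{-1}$ is invertible, a slightly weaker requirement than the stated one. One small clean-up: to justify $\alpha_k\neq 0$ you do not need the norm bound at all; invertibility alone forces $\bm\gamma=\bm 0$ whenever $\alpha_k=0$ in your linear system, contradicting $\|\bm{u}_k\|_2=1$. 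What the contour-integral approach buys is a template that generalizes (e.g.\ to spectral projectors or higher-order expansions), whereas your approach is shorter and self-contained for the identity at hand.
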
 %\footnote{JL: $\bm{u}_{k}^T \bm{u}_{0,k}\bm{u}_{0,k}^T\bm{u}_{k} = (\bm{u}_{k}^T \bm{u}_{0,k})^2$ 으로 표현하는 것이 더 좋지 않나? \\ 레마의 결과 식이 너무 복잡하다. $B_k: =  \bm{U}_{0,-k} \bm\Lambda_{0,-k}^{1/2}$라 정의하면 좀 더 식이 간단해지지 않나? }

\begin{proof}[Proof]
    Let $\bm{U} = [\bm{u}_1,\ldots, \bm{u}_p]$ and $\bm\Lambda= \textup{diag}(\lambda_1,\ldots, \lambda_p)$. The $\bm{U}_0$ and $\bm\Lambda_0$ are defined similarly.
    For any $z\in \bbC$, we have
    \bea 
    (\bm\Sigma - z\bm{I}_p)^{-1} 
    &=& \bm{U} (\bm\Lambda -  z\bm{I}_p)^{-1}\bm{U}^T \\
    &=& \sum_{i=1}^p \frac{1}{\lambda_i - z} \bm{u}_i\bm{u}_i^T,
    \eea 
    and 
    \bean\label{eq:evec1}
\bm{u}_{0,k}^T(\bm\Sigma - z\bm{I}_p)^{-1}\bm{u}_{0,k} &=&
\sum_{i=1}^p \frac{1}{\lambda_i - z} \bm{u}_{0,k}^T\bm{u}_i\bm{u}_i^T\bm{u}_{0,k}.
    \eean 
    Let $\gamma_k$ be a simple closed curve in $\bbC$ containing only $\lambda_k$ among $\{\lambda_1,\ldots, \lambda_ p\}$.
    The Cauchy's residue theorem gives 
    \bean\label{eq:evec2} 
    \oint_{\gamma_k}\bm{u}_{0,k}^T(\bm\Sigma - z\bm{I}_p)^{-1}\bm{u}_{0,k} dz = - 2\pi i\bm{u}_{0,k}^T\bm{u}_k\bm{u}_k^T\bm{u}_{0,k}.
    \eean 
    Since $\bm\Sigma -z\bm{I}_p  =   \bm{U}_0 (\bm\Lambda_0^{1/2} \bm{U}_0^T \bm\Sigma_0^{-1/2} \bm\Sigma \bm\Sigma_0^{-1/2} \bm{U}_0 \bm\Lambda_0^{1/2} - z\bm{I}_p) \bm{U}_0^T$, we obtain 
    \bean \label{eq:evec3} 
    \oint_{\gamma_k}\bm{e}_{k}^T(\bm\Lambda_0^{1/2} \bm{U}_0^T \bm\Omega \bm{U}_0 \bm\Lambda_0^{1/2} - z\bm{I}_p)^{-1}\bm{e}_{k} dz = - 2\pi i\bm{u}_{0,k}^T\bm{u}_k\bm{u}_k^T\bm{u}_{0,k},
    \eean 
    where $\bm\Omega=\bm\Sigma_0^{-1/2} \bm\Sigma \bm\Sigma_0^{-1/2} $ and $\bm{e}_k$ is the $k$th standard coordinate vector in $\bbR^p$. 
    The $\bm{e}_{k}^T(\bm\Lambda_0^{1/2} \bm{U}_0^T \bm\Omega \bm{U}_0 \bm\Lambda_0^{1/2} - z\bm{I}_p)^{-1}\bm{e}_{k}$ %\footnote{JL: $\bm{e}_{k}$ 정의해야한다. 앞에서 정의했나? }
    equals to the $(1,1)$ element of   
    \bea 
\begin{pmatrix}
    \lambda_{0,k} \bm{u}_{0,k}^T \bm\Omega \bm{u}_{0,k} -z  & \lambda_{0,k}^{1/2} \bm{u}_{0,k}^T \bm\Omega \bm{U}_{0,-k} \bm\Lambda_{0,-k}^{1/2} \\
    (\lambda_{0,k}^{1/2} \bm{u}_{0,k}^T \bm\Omega \bm{U}_{0,-k} \bm\Lambda_{0,-k}^{1/2} )^T & \bm\Lambda_{0,-k}^{1/2} \bm{U}_{0,-k}^T \bm\Omega \bm{U}_{0,-k}\bm\Lambda_{0,-k}^{1/2} -z \bm{I}_{p-1}
\end{pmatrix}^{-1},
\eea 
which is $\{    \lambda_{0,k} \bm{u}_{0,k}^T \bm\Omega \bm{u}_{0,k} -z -\lambda_{0,k} \bm{u}_{0,k}^T \bm\Omega \bm{U}_{0,-k} \bm\Lambda_{0,-k}^{1/2} (\bm\Lambda_{0,-k}^{1/2} \bm{U}_{0,-k}^T \bm\Omega \bm{U}_{0,-k}\bm\Lambda_{0,-k}^{1/2} -z \bm{I}_{p-1})^{-1}
\bm\Lambda_{0,-k}^{1/2} \bm{U}_{0,-k}^T \bm\Omega  \bm{u}_{0,k} \}^{-1}$. %, where $\bm\Lambda_{0,-k} = \textup{diag}(\lambda_{0,1},\ldots, \lambda_{0,k-1},\lambda_{0,k+1},\ldots, \lambda_{0,p})$. %denoted by $1/h(z)$.
Since, by \eqref{eq:evec1} and \eqref{eq:evec2},
\bea 
\bm{u}_{k}^T\bm{u}_{0,k}\bm{u}_{0,k}^T\bm{u}_{k}  = -\frac{1}{2\pi i} 
\oint_{\gamma_k} \sum_{i=1}^p \frac{1}{\lambda_i - z} \bm{u}_{0,k}^T\bm{u}_i\bm{u}_i^T\bm{u}_{0,k} dz,
\eea 
$\lambda_k$ is the only singular point inside of curve $\gamma_k$. 
Then,
\bea 
\bm{u}_{k}^T\bm{u}_{0,k}\bm{u}_{0,k}^T\bm{u}_{k} 
  &=& 
  -\frac{1}{2\pi i}\oint_{\gamma_k} \frac{1}{h(z)} dz \\
  &=& -\textup{Res}(\frac{1}{h(z)},\lambda_k),
  \eea 
  where, by \eqref{eq:evec3}, $h(z)$ is defined as below:
    \bea 
h(z) &=&   \lambda_{0,k} \bm{u}_{0,k}^T \bm\Omega \bm{u}_{0,k} -z \\
&&-\lambda_{0,k} \bm{u}_{0,k}^T \bm\Omega \bm{B}_{k} (\bm{B}_{k}^T \bm\Omega \bm{B}_{k}-z \bm{I}_{p-1})^{-1}
\bm{B}_{k}^T \bm\Omega  \bm{u}_{0,k}, %\\
%&=&\lambda_{0,k} \bm{u}_{0,k}^T \bm\Omega \bm{u}_{0,k} -z  -  \bm{u}_{0,k}^T\bm\Sigma \bm{U}_{0,-k} ( \bm{U}_{0,-k}^T\bm\Sigma \bm{U}_{0,-k} - z \bm\Lambda_{0,-k}^{-1} )^{-1} 
% \bm{U}_{0,-k}^T\bm\Sigma
%\bm{u}_{0,k}.
    \eea 
    where $\bm{B}_k =  \bm{U}_{0,-k}\bm\Lambda_{0,-k}^{1/2}$.
 %   The last equality is satisfied since, without loss of generality the case $k=1$ is considered,
    %\bea 
    %\bm\Sigma_0^{-1/2} \bm{U}_{0,-1} \bm\Lambda_{0,-1}^{1/2} &=& 
    %\bm{U}_0 \bm\Lambda_0^{-1/2} \bm{U}_0^T  \bm{U}_{0,-1} \bm\Lambda_{0,-1}^{1/2}\\
    %&=& [ \bm{u}_{0,1},\ldots ,\bm{u}_{0,p}] \begin{pmatrix}
    %    \lambda_{0,1}^{-1/2}\bm{u}_{0,1}^T \\
    %    \vdots\\
    %    \lambda_{0,p}^{-1/2}\bm{u}_{0,p}^T
    %\end{pmatrix}
    % [ \lambda_{0,2}^{1/2}\bm{u}_{0,2},\ldots ,\lambda_{0,p}^{1/2}\bm{u}_{0,p}] \\
    % &=&[ \bm{u}_{0,1},\ldots ,\bm{u}_{0,p}]  \begin{pmatrix}
    %      \bm{0}_{p-1}^T\\
    %      \bm{I}_{p-1}
    % \end{pmatrix} \\
    % &=& \bm{U}_{0,-1},\\
    % \bm\Sigma_0^{-1/2} \bm{u}_{0,1} \lambda_{0,1}^{1/2} &=& \bm{U}_0 \bm\Lambda_0^{-1/2} \bm{U}_0^T  \bm{u}_{0,1} \lambda_{0,1}^{1/2}\\
    % &=& \bm{u}_{0,1}.
  %  \eea 
  
   We obtain $\textup{Res}(\frac{1}{h(z)},\lambda_k)$ using Theorem 8.13 of \cite{ponnusamy2005foundations}, which states  
   \bea 
    \textup{Res}(\frac{1}{h(z)},\lambda_k) = \frac{1}{h'(\lambda_k)},
   \eea 
   when $h(z)$ has a simple zero at $\lambda_k$. 
    We have
    \bea 
    \frac{d h(z)}{dz} &=& -1 - [\nabla_{\textup{vec}(\bm{X})}  g(\bm{X}) ]^T\frac{d\textup{vec}(\bm{X})}{dz},\\
\bm{X}    &=&  \bm{B}_{k}^T \bm\Omega\bm{B}_{k} -z \bm{I}_{p-1} ,\\%\bm{U}_{0,-k}^T\bm\Sigma \bm{U}_{0,-k} - z \bm{I}_{p-1},\\
    g(\bm{X})&=&  \bm{v}^T \bm{X}^{-1} \bm{v},
\eea 
where $\bm{v} =  \sqrt{\lambda_{0,k}} \bm{B}_{k}^T\bm\Omega\bm{u}_{0,k}$,
and 
\bea 
\nabla_{\textup{vec}(\bm{X})}  g(\bm{X})  &=& 
- \textup{vec}(\bm{X}^{-1} \bm{v} \bm{v}^T \bm{X}^{-1} ) ,\\
\frac{d\textup{vec}(\bm{X})}{dz} &=&- \textup{vec}(\bm{I}_{p-1}) ,\\
\textup{vec}(\bm{X}^{-1} \bm{v} \bm{v}^T \bm{X}^{-1} )^T \textup{vec}(\bm{I}_{p-1}) &=& \textup{tr} ( \bm{X}^{-1} \bm{v} \bm{v}^T \bm{X}^{-1} ) ,\\
&=& \bm{v}^T \bm{X}^{-2}  \bm{v}.
\eea 
%where $\bm{v} =\bm{U}_{0,-k}^T\bm\Sigma \bm{u}_{0,k}$.
We obtain
\bea 
h'(\lambda_k)
  &=&  -1 - \bm{v}^T (\bm{B}_{k}^T \bm\Omega \bm{B}_{k} - \lambda_k \bm{I}_{p-1})^{-2} \bm{v}.
\eea 
When $|\bm{v}^T (\bm{B}_{k}^T \bm\Omega\bm{B}_{k} - \lambda_k \bm{I}_{p-1})^{-2} \bm{v}|<1$,  $|h'(\lambda_k)|\neq 0$, which implies $h(z)$ has a simple zero at $\lambda_k$ and 
\bea 
\bm{u}_{k}^T\bm{u}_{0,k}\bm{u}_{0,k}^T\bm{u}_{k}  &=&
\frac{1}{1 +  || (\bm{B}_{k}^T \bm\Omega \bm{B}_{k} - \lambda_k \bm{I}_{p-1})^{-1} \bm{v}||_2^2}\\
&=&\frac{1}{1 +  || \sqrt{\lambda_{0,k}}(\bm\Lambda_{0,-k}^{1/2}\bm{U}_{0,-k}^T \bm\Omega \bm{U}_{0,-k}\bm\Lambda_{0,-k}^{1/2} - \lambda_k \bm{I}_{p-1})^{-1} \bm\Lambda_{0,-k}^{1/2}\bm{U}_{0,-k}^T \bm\Omega \bm{u}_{0,k}||_2^2}\\
&=&\frac{1}{1 +  || \sqrt{\lambda_{0,k}}\bm\Lambda_{0,-k}^{-1/2}(\bm{U}_{0,-k}^T \bm\Omega \bm{U}_{0,-k} - \lambda_k \bm\Lambda_{0,-k}^{-1})^{-1} \bm{U}_{0,-k}^T \bm\Omega \bm{u}_{0,k}||_2^2}
\eea 
by Theorem 8.13 of \cite{ponnusamy2005foundations}.

\end{proof}

\begin{lemma}\label{lemma:blocknorm}
Let  
    \bea 
    \bm{A} = \begin{pmatrix}
        \bm{A}_{11} & \bm{A}_{12}\\
        \bm{A}_{21} & \bm{A}_{22}
    \end{pmatrix} \in \bbR^{p\times p},
    \eea 
    where $\bm{A}_{11}\in \bbR_{p_1\times p_1 }$ and $\bm{A}_{22}\in \bbR_{p_2\times p_2}$. 
    Then, 
    \bea 
    ||\bm{A}_{12}||_2 &\le&  ||\bm{A}||_2 .
    \eea 
\end{lemma}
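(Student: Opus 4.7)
The plan is to exploit the variational characterization of the spectral norm, $||\bm{A}_{12}||_2 = \sup\{||\bm{A}_{12}\bm{v}||_2 : \bm{v}\in\bbR^{p_2}, ||\bm{v}||_2=1\}$, and to show that every such $\bm{A}_{12}\bm{v}$ arises as (part of) $\bm{A}\tilde{\bm{v}}$ for a suitable unit vector $\tilde{\bm{v}}\in\bbR^p$.

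Specifically, for any unit vector $\bm{v}\in\bbR^{p_2}$, I would set $\tilde{\bm{v}} = (\bm{0}_{p_1}^T, \bm{v}^T)^T \in \bbR^p$, which is itself a unit vector since the appended top block contributes zero to the norm. A direct block-matrix computation gives
\[
\bm{A}\tilde{\bm{v}} = \begin{pmatrix} \bm{A}_{12}\bm{v} \\ \bm{A}_{22}\bm{v} \end{pmatrix},
\]
so that $||\bm{A}_{12}\bm{v}||_2^2 \le ||\bm{A}_{12}\bm{v}||_2^2 + ||\bm{A}_{22}\bm{v}||_2^2 = ||\bm{A}\tilde{\bm{v}}||_2^2 \le ||\bm{A}||_2^2 \, ||\tilde{\bm{v}}||_2^2 = ||\bm{A}||_2^2$. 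Taking the supremum over unit vectors $\bm{v}$ yields the desired bound.

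No significant obstacle is expected: the argument is elementary and uses only the definition of the operator norm, none of the spectral-decomposition machinery invoked elsewhere in the paper. An equivalent route would be to write $\bm{A}_{12} = \bm{E}_1^T \bm{A} \bm{E}_2$, where $\bm{E}_1 \in \bbR^{p\times p_1}$ and $\bm{E}_2 \in \bbR^{p\times p_2}$ are the canonical embeddings with orthonormal columns (hence operator norm one), and then invoke submultiplicativity of the spectral norm; the embedding-based argument above is slightly more transparent and avoids introducing extra notation.
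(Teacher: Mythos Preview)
Your proposal is correct and takes essentially the same approach as the paper: both pad a unit vector $\bm{v}\in\bbR^{p_2}$ with zeros to obtain a unit vector in $\bbR^p$, compute $\bm{A}$ applied to it, and observe that the top block $\bm{A}_{12}\bm{v}$ has norm at most $||\bm{A}\tilde{\bm{v}}||_2\le ||\bm{A}||_2$. The only cosmetic difference is that the paper extracts the top block directly while you use the Pythagorean identity on both blocks.
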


\begin{proof}[Proof]
    Let $\bm{v}=(\bm{v}_1^T,\bm{v}_2^T)^T \in \bbR^p$ with $||\bm{v}||_2 =1 $ and $\bm{v}_1  = \bm{0}_{p_1}$.
\bea 
||\bm{A}||_2 &\ge&
||\bm{A}\bm{v}||_2 \\
&\ge & 
||    \bm{A}_{11}\bm{v}_1   + \bm{A}_{12}\bm{v}_2   ||_2
\\
&=& ||\bm{A}_{12}\bm{v}_2  ||_2.
\eea 
Since $\bm{v}_2\in \bbR^{p_2} $ can be an arbitrary vector with $||\bm{v}_2||_2=1$,
\bea 
||\bm{A}||_2  \ge ||\bm{A}_{12}||_2.
\eea 

\end{proof}

\begin{lemma}\label{lemma:blocknorm2}
    Let $\bm\Omega\in \calC_p$, and let $\bm\Gamma = [\bm\Gamma_1 ,\bm\Gamma_2]\in \bbR^{p\times q}$ be an orthogonal matrix with $\bm\Gamma_1\in \bbR^{p\times q_1}$ and $\bm\Gamma_2\in \bbR^{p\times q_2}$.
    \bea 
    ||\bm\Gamma_1^T \bm\Omega \bm\Gamma_2 ||_2  \le ||\bm\Gamma^T (\bm\Omega -\bm{I}_p) \bm\Gamma ||_2 .
    \eea 
\end{lemma}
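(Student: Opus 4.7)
The plan is to reduce the statement to Lemma \ref{lemma:blocknorm} by exhibiting $\bm\Gamma_1^T \bm\Omega \bm\Gamma_2$ as an off-diagonal block of the matrix $\bm\Gamma^T(\bm\Omega - \bm{I}_p)\bm\Gamma$. The key observation is that the orthonormality of the columns of $\bm\Gamma$, i.e. $\bm\Gamma^T \bm\Gamma = \bm{I}_q$, forces $\bm\Gamma_1^T \bm\Gamma_2 = \bm{0}$, so that subtracting the identity only affects the diagonal blocks.

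First I would partition the $q \times q$ matrix $\bm\Gamma^T(\bm\Omega - \bm{I}_p)\bm\Gamma$ according to the split $\bm\Gamma = [\bm\Gamma_1, \bm\Gamma_2]$:
\begin{equation*}
\bm\Gamma^T(\bm\Omega - \bm{I}_p)\bm\Gamma = \begin{pmatrix}
\bm\Gamma_1^T \bm\Omega \bm\Gamma_1 - \bm{I}_{q_1} & \bm\Gamma_1^T \bm\Omega \bm\Gamma_2 \\
\bm\Gamma_2^T \bm\Omega \bm\Gamma_1 & \bm\Gamma_2^T \bm\Omega \bm\Gamma_2 - \bm{I}_{q_2}
\end{pmatrix},
\end{equation*}
where the off-diagonal blocks are exactly $\bm\Gamma_1^T \bm\Omega \bm\Gamma_2$ and its transpose because $\bm\Gamma_1^T \bm\Gamma_2 = \bm{0}$ by the orthonormality of the columns of $\bm\Gamma$.

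Next I would apply Lemma \ref{lemma:blocknorm} with $\bm{A} = \bm\Gamma^T(\bm\Omega - \bm{I}_p)\bm\Gamma$, $p_1 = q_1$, $p_2 = q_2$, and $\bm{A}_{12} = \bm\Gamma_1^T \bm\Omega \bm\Gamma_2$, which immediately yields
\begin{equation*}
\|\bm\Gamma_1^T \bm\Omega \bm\Gamma_2\|_2 \le \|\bm\Gamma^T(\bm\Omega - \bm{I}_p)\bm\Gamma\|_2,
\end{equation*}
completing the argument. I do not expect any real obstacle here: the only point requiring care is interpreting the word ``orthogonal'' for the rectangular matrix $\bm\Gamma$ as meaning orthonormal columns (so $\bm\Gamma^T\bm\Gamma = \bm{I}_q$), which is what makes the off-diagonal blocks of $\bm\Gamma^T \bm{I}_p \bm\Gamma$ vanish and lets us replace $\bm\Omega$ by $\bm\Omega - \bm{I}_p$ without affecting the block of interest.
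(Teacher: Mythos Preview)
Your proposal is correct and follows essentially the same approach as the paper: partition $\bm\Gamma^T(\bm\Omega-\bm{I}_p)\bm\Gamma$ into blocks, use the orthonormality $\bm\Gamma_1^T\bm\Gamma_2=\bm{0}$ to identify the off-diagonal block as $\bm\Gamma_1^T\bm\Omega\bm\Gamma_2$, and then invoke Lemma~\ref{lemma:blocknorm}. There is nothing to add.
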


\begin{proof}[Proof]

    \bea
   || \bm\Gamma^T (\bm\Omega -\bm{I}_p) \bm\Gamma ||_2 &=& 
    \begin{pmatrix}
        \bm\Gamma_1^T (\bm\Omega -\bm{I}_p)\bm\Gamma_1  & \bm\Gamma_1^T (\bm\Omega -\bm{I}_p)\bm\Gamma_2 \\
        \bm\Gamma_2^T (\bm\Omega -\bm{I}_p)\bm\Gamma_1  & \bm\Gamma_2^T (\bm\Omega -\bm{I}_p)\bm\Gamma_2 
    \end{pmatrix} \\
    &\ge &  ||\bm\Gamma_1^T (\bm\Omega -\bm{I}_p)\bm\Gamma_2 ||_2\\
    &= &||\bm\Gamma_1^T \bm\Omega  \bm\Gamma_2||_2,
    \eea 
    where the inequality is satisfied by Lemma \ref{lemma:blocknorm}.
\end{proof}

\begin{lemma}\label{lemma:eigenratios}
 
 Suppose  $\max\Big\{ \frac{\lambda_{0,k}}{\lambda_{0,l}}, \frac{\lambda_{0,l}}{\lambda_{0,k}} \Big\} >c > 1$. 
 If $ \left|\frac{\lambda_k}{\lambda_{0,k}} -1 \right| \le \delta:=\delta(c) =  (1-1/c)/4 \wedge  (1-1/c)/(2(2-1/c)) \wedge 1/2$,  then  
\bea 
|\frac{\lambda_{k}}{\lambda_{0,l}} -1 |  &\ge& (1-1/c)/2,\\
 \frac{\sqrt{\lambda_k\lambda_{0,l}}}{|\lambda_k-\lambda_{0,l} |}   &\le& 
 \frac{2\sqrt{2}}{1-1/c} \min\Big\{ \sqrt{\frac{\lambda_{0,l}}{\lambda_{0,k}} }, \sqrt{\frac{\lambda_{0,k}}{\lambda_{0,l}} } \Big\}.
\eea 
%\color{purple}
%for $l=1,\ldots,K$.
%\color{black}
\end{lemma} 

\begin{proof}[Proof]

    First, suppose $\lambda_{0,k}/\lambda_{0,l} >c$.

%\footnote{JL: 마지막 부등식이 문제가 있어 보인다. 마지막 부등식이 성립하려면 \[ \frac{1}{2}\frac{\lambda_{0,k}}{\lambda_{0,l}} (1-\delta) \geq 1 \]
%이어야 하고 이는 
%\[ \frac{\lambda_{0,k}}{\lambda_{0,l}} \geq \frac{1-\delta}{2}\]
%와 동치다. 가정에서 $\frac{\lambda_{0,k}}{\lambda_{0,l}}  > c$ 이므로 
%\[\frac{1-\delta}{2} \geq c\]
%가 성립하면 이 부등식이 성립한다. 이는 
%\[\delta \leq 1 - \frac{2}{c}\]
%와 동치이다. 따라서 마지막 부등식이 성립하려면 $\delta\le 1-1/(2c)$가 아니라 $\delta \leq 1 - \frac{2}{c}$가 성립해야 한다. 그런데 $1 - \frac{2}{c} > 0 \iff c > 2$이다. 조건들이 엉켜있는 것 같구나. 이 부분의 증명을 정리를 해야겠다. }

    We have
    \bea 
|\frac{\lambda_k}{\lambda_{0,l}} -1| &=& \frac{\lambda_{0,k}}{\lambda_{0,l}}| \frac{\lambda_k}{\lambda_{0,k}}-\frac{\lambda_{0,l}}{\lambda_{0,k}} |\\
&\ge& 
\frac{\lambda_{0,k}}{\lambda_{0,l}} \Big(| \frac{\lambda_{0,l}}{\lambda_{0,k}} -1| -
|\frac{\lambda_k}{\lambda_{0,k}}-1|
\Big)\\
&\ge& \frac{\lambda_{0,k}}{\lambda_{0,l}}(1-1/c- \delta) \\
&\ge& \frac{\lambda_{0,k}}{\lambda_{0,l}}(1-1/c)/2,
\eea 
where the last inequality is satisfied when $\delta\le (1-1/c)/2$.

Thus, since $\lambda_{0,k}/\lambda_{0,l} >c$ ,
\bea 
\left|\frac{\lambda_k}{\lambda_{0,l}} -1 \right| &\ge& c(1-1/c)/2\\
&\ge & (1-1/c)/2.
\eea 

We have
\bea 
\frac{\lambda_k}{\lambda_{0,l}} &=& \frac{\lambda_{0,k}}{\lambda_{0,l}} \frac{\lambda_k}{\lambda_{0,k}}  \\
&\le&  \frac{\lambda_{0,k}}{\lambda_{0,l}}  ( 1+ \delta)\\
&\le&  2\frac{\lambda_{0,k}}{\lambda_{0,l}}  .
\eea 
Then,
\bea 
\frac{\sqrt{\lambda_k\lambda_{0,l}}}{|\lambda_k-\lambda_{0,l}|}&=&\frac{\sqrt{\lambda_k/\lambda_{0,l}}}{|\lambda_k/\lambda_{0,l} -1|}\\
&\le&  \sqrt{\frac{\lambda_{0,l}}{\lambda_{0,k}} }   \frac{2\sqrt{2}}{1-1/c}.
\eea

Next, suppose $\lambda_{0,l}/\lambda_{0,k} >c$. 
\bea 
\left|\frac{\lambda_{k}}{\lambda_{0,l}} -1 \right| &=&
\left|\frac{\lambda_k}{\lambda_{0,k}} \Big( \frac{\lambda_{0,k}}{\lambda_{0,l}}-1 \Big) + \Big( \frac{\lambda_k}{\lambda_{0,k}}-1\Big) \right|\\
&\ge& \frac{\lambda_k}{\lambda_{0,k}}  \left|\Big( \frac{\lambda_{0,k}}{\lambda_{0,l}}-1 \Big) \right| - \left|\Big( \frac{\lambda_k}{\lambda_{0,k}}-1\Big) \right| \\
&\ge&  \frac{\lambda_k}{\lambda_{0,k}}(1-1/c) - \delta\\
&\ge&  (1-1/c)\left(1-  \left|\frac{\lambda_k}{\lambda_{0,k}}-1 \right| \right) - \delta\\
&\ge&  (1-1/c)(1-  \delta) - \delta\\
&\ge& (1-1/c) - \delta ( 2-1/c) \\
&\ge& (1-1/c)/2,
\eea 
where the last inequality is satisfied since $\delta \le (1-1/c)/(2(2-1/c))$.
We have
\bea 
\left|\frac{\lambda_{0,k}}{\lambda_k} -1\right| &=& \frac{\lambda_{0,k}}{\lambda_k}\left|\frac{\lambda_{k}}{\lambda_{0,k}} -1 \right|\\
&\le& \left|\frac{\lambda_{k}}{\lambda_{0,k}} -1 \right| + \left|\frac{\lambda_{0,k}}{\lambda_k}-1\right| \left|\frac{\lambda_{k}}{\lambda_{0,k}} -1 \right|\\
&\le& \delta + \left|\frac{\lambda_{0,k}}{\lambda_k} -1 \right| \delta,
\eea 
which gives
\bea 
\left|\frac{\lambda_{0,k}}{\lambda_k} -1 \right|&\le& \frac{\delta}{1-\delta} \\
&\le& 2\delta
\eea 
by assuming $\delta<1/2$.

We have 
\bea 
\left|\frac{\lambda_{0,l}}{\lambda_{k}} -1 \right| &=& \frac{\lambda_{0,l}}{\lambda_{0,k}}  \left|\frac{\lambda_{0,k}}{\lambda_{k}} -  \frac{\lambda_{0,k}}{\lambda_{0,l}} \right|\\
&\ge& \frac{\lambda_{0,l}}{\lambda_{0,k}}  \left( 1 -  \frac{\lambda_{0,k}}{\lambda_{0,l}}  -\left|\frac{\lambda_{0,k}}{\lambda_{k}}-1 \right|\right)\\
&\ge&   \frac{\lambda_{0,l}}{\lambda_{0,k}} (1-1/c - 2\delta) \\
&=& \frac{\lambda_{0,l}}{\lambda_{0,k}} (1-1/c)/2  ,
\eea 
where the last inequality is satisfied by setting $\delta\le (1-1/c)/4$, and
\bea 
\frac{\lambda_{0,l}}{\lambda_{k}} &=& \frac{\lambda_{0,l}}{\lambda_{0,k}} \frac{\lambda_{0,k}}{\lambda_{k}}  \\
&\le&  \frac{\lambda_{0,l}}{\lambda_{0,k}}  ( 1+ 2\delta)\\
&\le& 2\frac{\lambda_{0,l}}{\lambda_{0,k}}.
\eea 
Thus, we obtain 
\bea 
\frac{\sqrt{\lambda_k\lambda_{0,l}}}{|\lambda_k-\lambda_{0,l}|}&=&\frac{\sqrt{\lambda_{0,l}/\lambda_k}}{|\lambda_{0,l}/\lambda_k -1|}\\
&\le&  \sqrt{\frac{\lambda_{0,k}}{\lambda_{0,l}} }   \frac{2\sqrt{2}}{1-1/c}.
\eea 

\end{proof}

\begin{lemma}\label{lemma:A-B}
    Let $\bm{A},\bm{B}\in \calC_p$. If $\lambda_{\min}(\bm{A}) > ||\bm{B}||_2$, then  
    \bea 
    ||(\bm{A}-\bm{B})^{-1}||_2 \le \frac{1}{\lambda_{\min}(\bm{A})-||\bm{B}||_2}.
    \eea 
\end{lemma}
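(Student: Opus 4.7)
The plan is to reduce the bound on $\|(\bm{A}-\bm{B})^{-1}\|_2$ to a bound on the smallest eigenvalue of $\bm{A}-\bm{B}$ from below, and then use a Rayleigh quotient argument. Since $\bm{A},\bm{B}\in\calC_p$ are symmetric, so is $\bm{A}-\bm{B}$; therefore its operator norm and spectral radius relate in the usual way, and in particular, once we show $\bm{A}-\bm{B}$ is positive definite, we will have $\|(\bm{A}-\bm{B})^{-1}\|_2 = 1/\lambda_{\min}(\bm{A}-\bm{B})$.

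First I would observe that for every unit vector $\bm{x}\in\bbR^p$,
\[
\bm{x}^T(\bm{A}-\bm{B})\bm{x} = \bm{x}^T\bm{A}\bm{x} - \bm{x}^T\bm{B}\bm{x} \ge \lambda_{\min}(\bm{A}) - \|\bm{B}\|_2,
\]
where the inequality uses the variational characterization $\bm{x}^T\bm{A}\bm{x}\ge \lambda_{\min}(\bm{A})$ for the first term, and $|\bm{x}^T\bm{B}\bm{x}|\le \|\bm{B}\|_2$ for the second (recalling that since $\bm{B}$ is positive definite, $\|\bm{B}\|_2 = \lambda_{\max}(\bm{B})$). Taking the infimum over unit $\bm{x}$ gives $\lambda_{\min}(\bm{A}-\bm{B}) \ge \lambda_{\min}(\bm{A}) - \|\bm{B}\|_2$.

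The hypothesis $\lambda_{\min}(\bm{A}) > \|\bm{B}\|_2$ then ensures $\lambda_{\min}(\bm{A}-\bm{B}) > 0$, so $\bm{A}-\bm{B}$ is symmetric positive definite and hence invertible. For a symmetric positive definite matrix $\bm{M}$, one has $\|\bm{M}^{-1}\|_2 = 1/\lambda_{\min}(\bm{M})$. Applying this to $\bm{M}=\bm{A}-\bm{B}$ yields
\[
\|(\bm{A}-\bm{B})^{-1}\|_2 = \frac{1}{\lambda_{\min}(\bm{A}-\bm{B})} \le \frac{1}{\lambda_{\min}(\bm{A}) - \|\bm{B}\|_2},
\]
which is the desired bound. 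There is no real obstacle: the result is a direct application of a Weyl-type perturbation inequality (here proved by a one-line Rayleigh quotient argument) combined with the identity relating the spectral norm of an inverse to the smallest eigenvalue of a symmetric positive definite matrix.
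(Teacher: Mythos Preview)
Your proof is correct and follows essentially the same approach as the paper: both bound $\lambda_{\min}(\bm{A}-\bm{B})$ from below via the Rayleigh quotient (the paper evaluates at the eigenvectors $\bm{v}_j$ of $\bm{A}-\bm{B}$, you take the infimum over all unit vectors, which is equivalent) and then use $\|(\bm{A}-\bm{B})^{-1}\|_2 = 1/\lambda_{\min}(\bm{A}-\bm{B})$. Your version is arguably a bit cleaner in that you explicitly derive the positive definiteness (and hence invertibility) of $\bm{A}-\bm{B}$ before invoking the norm identity.
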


\begin{proof}[Proof]
We have 
    \bea 
    ||(\bm{A}-\bm{B})^{-1}||_2 &=& \frac{1}{\min_{j=1,\ldots,p}|\lambda_j(\bm{A}-\bm{B})|} ,\\
    \lambda_j(\bm{A}-\bm{B})  &=& \bm{v}_j^T \bm{A}\bm{v}_j -\bm{v}_j^T \bm{B}\bm{v}_j \\
    &\ge& \lambda_{\min}(\bm{A}) - ||\bm{B}||_2 ,
    \eea 
    where $\bm{v}_j$ is the $j$th eigenvector.
    Since $\lambda_{\min}(\bm{A}) > ||\bm{B}||_2$,
    \bea 
    ||(\bm{A}-\bm{B})^{-1}||_2 \le \frac{1}{\lambda_{\min}(\bm{A})-||\bm{B}||_2}.
    \eea 
\end{proof}

%\footnote{JL: 1. 가정에 "... for some positive constant $\delta$ dependent on $c$,"가 있는 것은 이상하다. 이 $\delta$ 값은 $c$에 의존하기  때문에 정확한 값을 알 수 없는데, $|\frac{\lambda_k}{\lambda_{0,k}} -1| \le \delta<1/2 $가 성립하는지 어떻게 아나? 
%"If $|\frac{\lambda_k}{\lambda_{0,k}} -1| \le \delta<1/2 $ for some positive constant $\delta$,"라고 쓰면 말이 된다. \\ 2. dependant라고 계속 쓰는데, dependent가 맞다. 레마의 서술에 있는 것은 고쳤다. $\longrightarrow$
%\color{blue}
%KL: $\delta< 1/2$을 제외하였습니다. 
%구체적으로, $\delta =   (1-1/c)/4 \wedge  (1-1/c)/(2(2-1/c)) \wedge 1/2$으로 잡으면되어서, 이를 $c$에 dependent하다는 표현은 유지하였습니다. 나중에 eigenvector에 대한 uniform convergence를 위해서, $\delta$가 $c$에 의존한다는 내용을 유지할 필요가 있습니다.\color{black}}
%\footnote{JL: 레마의 서술을 다음과 같이 고치는 것이 좋겠다. 네가 쓴 레마의 결론은 $C$가 하나만 있는데, 위의 값과 아래의 값은 다른 것 같다. 어쨌든 다르다면 $C_1$, $C_2$로 다른 상수를 쓰는 것이 혼동이 없겠다. $\delta$ 값은 정확하게 써야 레마의 서술과 증명의 흐름이 명확해져서 가정에 $\delta$ 값을 서술했다. }

Using these lemmas, we prove Theorem \ref{thm:highevector}.
\begin{proof}[Proof of Theorem \ref{thm:highevector}]
 
 %In this proof, we let $\delta_1$ and $\delta_2$ denote $\delta_1$ and $\delta_2$ in the statement of the theorem, respectively.
%\color{purple}
% We show $1-(\bm{u}_l^T \bm{u}_{0,l})^2 \le Ct$ when
% $|\frac{\lambda_l}{\lambda_{0,l}}-1 | \le \delta_1$,
% $ || \bm\Gamma^T \bm\Omega \bm\Gamma  -\bm{I}_K ||_2\le \frac{\sqrt{t}}{\sqrt{B}} \wedge \delta_2$ and $\frac{\sqrt{\lambda_{0,K+1}}||\bm\Gamma_\perp^T \bm\Omega \bm\Gamma_\perp ||^{1/2}}{\sqrt{\lambda_{0,l}} } \le \sqrt{t}$ (or $\frac{\sqrt{\lambda_{0,K+1}}}{\sqrt{\lambda_{0,l}} }  || \bm\Omega - \bm{I}_p||_2 \le \sqrt{t}$) and $t \le \Big(\frac{\lambda_{0,K+1}}{\lambda_{0,l}}\Big)^{1/4}(1-\delta_1) \wedge (d^{-1/8}-d^{1/2})^2\wedge \frac{\delta_2}{2c_l(1+\delta_2)}$, where $\bm\Omega = \bm\Sigma_0^{-1/2} \bm\Sigma  \bm\Sigma_0^{-1/2} $, and $C$ is some positive constant dependent on $c$ and $d$. 
We show $1-(\bm{u}_l^T \bm{u}_{0,l})^2 \le Ct$ when
$\left|\frac{\lambda_l}{\lambda_{0,l}}-1 \right| \le \delta_1$,
$ || \bm\Gamma^T \bm\Omega \bm\Gamma  -\bm{I}_K ||_2\le \frac{\sqrt{t}}{\sqrt{B_k}} \wedge \delta_2$ and $\frac{\sqrt{\lambda_{0,K+1}}||\bm\Gamma_\perp^T \bm\Omega \bm\Gamma_\perp ||^{1/2}}{\sqrt{\lambda_{0,l}} } \le \sqrt{t}$ $\left(\text{or}\,\, \frac{\sqrt{\lambda_{0,K+1}}}{\sqrt{\lambda_{0,l}} }  || \bm\Omega - \bm{I}_p||_2 \le \sqrt{t} \right)$, where $\bm\Omega = \bm\Sigma_0^{-1/2} \bm\Sigma  \bm\Sigma_0^{-1/2} $, $t \le \Big(\frac{\lambda_{0,K+1}}{\lambda_{0,l}}\Big)^{1/4}(1-\delta_1) \wedge (d^{-1/8}-d^{1/2})^2\wedge \left( \frac{\delta_2}{2c_l(1+\delta_2)} \right)$, and $C$ is some positive constant dependent on $c$ and $d$. 
%\color{black}\footnote{SK : $t$ 부분 위치 변경 , $B_k$로 수정 }

%C[ \{ \sup_{i\le l-1}\lambda_{0,l}/\lambda_{0,i} )\vee (\sup_{i\ge l+1}\lambda_{0,i}/\lambda_{0,l} ) \}t_1^2/K^2 + t_2^2]$ when $ K || \bm\Gamma^T \bm\Omega \bm\Gamma  -\bm{I}_K ||_2\le t_1$ and $\frac{\sqrt{\lambda_{0,K+1}}||\bm\Gamma_\perp^T \bm\Omega \bm\Gamma_\perp ||^{1/2}}{\sqrt{\lambda_{0,l}} } \le t_2$ with $t_1\vee t_2\le \delta$ for some positive constants $\delta$ and $C$, where $\bm\Omega = \bm\Sigma_0^{-1/2} \bm\Sigma  \bm\Sigma_0^{-1/2} $.
    
We obtain $\lambda_{0,l} \le \lambda_{l}/(1-\delta_1) \le 2 \lambda_{l}$ from $\left|\frac{\lambda_l}{\lambda_{0,l}}-1 \right| \le \delta_1$ by setting $\delta_1<1/2$.
    Then, by Lemma \ref{lemma:evector}, we have
    \bea
    1-(\bm{u}_l^T \bm{u}_{0,l})^2 &\le&  ||\sqrt{\lambda_{0,l}}\bm\Lambda_{0,-l}^{-1/2}(\bm{U}_{0,-l}^T \bm\Omega \bm{U}_{0,-l} - \lambda_l \bm\Lambda_{0,-l}^{-1})^{-1} \bm{U}_{0,-l}^T \bm\Omega \bm{u}_{0,l}||_2^2 \\
    &\le& 2||\sqrt{\lambda_{l}}\bm\Lambda_{0,-l}^{-1/2}(\bm{U}_{0,-l}^T \bm\Omega \bm{U}_{0,-l} - \lambda_k \bm\Lambda_{0,-l}^{-1})^{-1} \bm{U}_{0,-l}^T \bm\Omega \bm{u}_{0,l}||_2^2,
    \eea 
%\color{purple}
%  where $\bm\Lambda_{0,-l}  = \textup{diag} (\bm\Lambda_{(1)}, \bm\Lambda_{(2)} ) $ and 
%     $ \bm{U}_{0,-l} = [ \bm{U}_{(1)} , \bm{U}_{(2)}]$,
% where $\bm\Lambda_{(1)} =\textup{diag}(\lambda_{0,1},\ldots, \lambda_{0,l-1},\lambda_{0,l+1}, \lambda_{0,K} )\in \calC_{K-1}$, $\bm\Lambda_{(2)}= \textup{diag}(\lambda_{0,K+1},\ldots, \lambda_{0,p})\in \calC_{p-K}$ are diagonal matrices and $\bm{U}_{(1)} = [\bm{u}_{0,1},\ldots, \bm{u}_{0,l-1},\bm{u}_{0,l+1},\ldots, \bm{u}_{0,K}] \in \bbR^{p\times (K-1)}$ and $\bm{U}_{(2)}  = \bm\Gamma_\perp =  [\bm{u}_{0,K+1},\ldots, \bm{u}_{0,p}]\in \bbR^{p\times (p-K)}$ are orthogonal matrices.
where $\bm\Lambda_{0,-l} = \textup{diag} (\bm\Lambda_{(1)}, \bm\Lambda_{(2)})$ and $\bm{U}_{0,-l} = [\bm{U}_{(1)}, \bm{U}_{(2)}]$. 

In this expression, $\bm\Lambda_{(1)} = \textup{diag}(\lambda_{0,1}, \ldots, \lambda_{0,l-1}, \lambda_{0,l+1}, \ldots, \lambda_{0,K}) \in \calC_{K-1}$ and $\bm\Lambda_{(2)} = \textup{diag}(\lambda_{0,K+1}, \ldots, \lambda_{0,p}) \in \calC_{p-K}$ are diagonal matrices. 
Similarly, $\bm{U}_{(1)} = [\bm{u}_{0,1}, \ldots, \bm{u}_{0,l-1}, \bm{u}_{0,l+1}, \ldots, \bm{u}_{0,K}] \in \bbR^{p \times (K-1)}$ and $\bm{U}_{(2)} = \bm\Gamma_\perp = [\bm{u}_{0,K+1}, \ldots, \bm{u}_{0,p}] \in \bbR^{p \times (p-K)}$ are orthogonal matrices.
%\color{black}

We expand $( \bm{U}_{0,-l}^T \bm\Omega \bm{U}_{0,-l} -\lambda_l \bm\Lambda_{0,-l}^{-1})^{-1}$ as 
    \bea 
    &&(\bm{U}_{0,-l}^T \bm\Omega \bm{U}_{0,-l} -\lambda_l \bm\Lambda_{0,-l}^{-1})^{-1} \\
    &=& \begin{pmatrix}
\bm{U}_{(1)}^T \bm\Omega \bm{U}_{(1)}  - \lambda_l \bm\Lambda_{(1)}^{-1} & \bm{U}_{(1)}^T \bm\Omega  \bm{U}_{(2)} \\
        (\bm{U}_{(1)}^T \bm\Omega  \bm{U}_{(2)})^T & \bm{U}_{(2)}^T \bm\Omega  \bm{U}_{(2)} -  \lambda_l\bm\Lambda_{(2)}^{-1}
    \end{pmatrix}^{-1}\\
    &=& \begin{pmatrix}
    \bm{A} & \bm{B} \\
    \bm{B}^T & \bm{C}
    \end{pmatrix}^{-1} \\
    &=& \begin{pmatrix}
    \bm{B}_{11} & \bm{B}_{12} \\
    \bm{B}_{12}^T & \bm{B}_{22}
    \end{pmatrix},
    \eea 
    where $\bm{B}_{11} = (\bm{A} -\bm{B} \bm{C}^{-1} \bm{B}^T)^{-1}$, $\bm{B}_{12} = -(\bm{A} -\bm{B} \bm{C}^{-1} \bm{B}^T)^{-1} \bm{B} \bm{C}^{-1} = -\bm{B}_{11}\bm{B} \bm{C}^{-1}$ and $\bm{B}_{22} = \bm{C}^{-1} + \bm{C}^{-1}\bm{B}^T (\bm{A} -\bm{B} \bm{C}^{-1} \bm{B}^T)^{-1} \bm{B} \bm{C}^{-1}$.
    Then, 
            \bean
     &&||\sqrt{\lambda_{l}}\bm\Lambda_{0,-l}^{-1/2}( \bm{U}_{-l}^T \bm\Omega \bm{U}_{-l} -\lambda_l  \bm\Lambda_{0,-l}^{-1})^{-1}
 \bm{U}_{-l}^T \bm\Omega  \bm{u}_{0,l}||_2 \nonumber\\
&=& 
||\begin{pmatrix}
     \sqrt{\lambda_{l}}\bm\Lambda_{(1)}^{-1/2}\bm{B}_{11}\bm{U}_{(1)}^T \bm\Omega \bm{u}_{0,l}  +  \sqrt{\lambda_{l}}\bm\Lambda_{(1)}^{-1/2}\bm{B}_{12} \bm{U}_{(2)}^T \bm\Omega \bm{u}_{0,l}  \\
     \sqrt{\lambda_{l}}\bm\Lambda_{(2)}^{-1/2}\bm{B}_{12}^T \bm{U}_{(1)}^T \bm\Omega \bm{u}_{0,l}  +  \sqrt{\lambda_{l}}\bm\Lambda_{(2)}^{-1/2}\bm{B}_{22}\bm{U}_{(2)}^T \bm\Omega \bm{u}_{0,l} 
    \end{pmatrix}||_2 \nonumber\\
%\nonumber\\
%     &\le & 
%|| \bm\Lambda_{(1)}^{1/2}\bm{B}_{11}^{(2)}\bm\Lambda_{(1)}^{1/2}||_2 || \bm{u}_{0,k}^T \bm\Omega \bm{U}_{(1)}||_2^2  + 
%|| \bm\Lambda_{(2)}^{1/2}\bm{B}_{22}^{(2)}\bm\Lambda_{(2)}^{1/2}||_2 || \bm{u}_{0,k}^T \bm\Omega \bm{U}_{(2)}||_2^2 \nonumber\\
%&&+
%2 ||\bm\Lambda_{(1)}^{1/2}  \bm{B}_{12}^{(2)} \bm\Lambda_{(2)}^{1/2}||_2 ||  \bm{u}_k^T \bm\Omega \bm{U}_{(1)} ||_2 ||  \bm{u}_{0,k}^T \bm\Omega \bm{U}_{(2)}||_2\nonumber\\
%&\le& \frac{c_4^2K}{n}|| \bm\Lambda_{(1)}^{1/2}\bm{B}_{11}^{(2)}\bm\Lambda_{(1)}^{1/2}||_2  + \frac{c_5^2p}{n}|| \bm\Lambda_{(2)}||_2 ||\bm{B}_{22}^{(2)}||_2 + 2\frac{c_4c_5 \sqrt{Kp}}{n}||\bm\Lambda_{(1)}^{1/2}  \bm{B}_{12}^{(2)} \bm\Lambda_{(2)}^{1/2}||_2,\label{eq:egmtx0}
&\le& ||\sqrt{\lambda_{l}}\bm\Lambda_{(1)}^{-1/2}\bm{B}_{11}||_2 ||\bm{U}_{(1)}^T \bm\Omega \bm{u}_{0,l}||_2 + 
||\sqrt{\lambda_{l}}\bm\Lambda_{(1)}^{-1/2}\bm{B}_{12} ||_2 ||\bm{U}_{(2)}^T \bm\Omega \bm{u}_{0,l}||_2 \nonumber\\
&&+ ||\sqrt{\lambda_{l}}\bm\Lambda_{(2)}^{-1/2}\bm{B}_{12}^T||_2 || \bm{U}_{(1)}^T \bm\Omega \bm{u}_{0,l} ||_2 +  ||\sqrt{\lambda_{l}}\bm\Lambda_{(2)}^{-1/2}\bm{B}_{22}||_2 ||\bm{U}_{(2)}^T \bm\Omega \bm{u}_{0,l} ||_2.\label{eq:factoruppers}
    \eean

    In the following, we provide upper bounds of the factors in \eqref{eq:factoruppers}.
    Since $\bm\Gamma$ consists of the column vectors of $\bm{U}_{(1)}$ and $\bm{u}_{0,l}$, by Lemma \ref{lemma:blocknorm2},
    \bean 
    ||\bm{U}_{(1)}^T \bm\Omega \bm{U}_{(1)} - \bm{I}_{K-1}||_2 &\le& ||\bm\Gamma^T \bm\Omega \bm\Gamma -\bm{I}_K ||_2\le \sqrt{t}/\sqrt{B_k} \wedge \delta_2\label{eq:U1OU1}\\
    ||\bm{U}_{(1)}^T \bm\Omega \bm{u}_{0,l} ||_2  &\le& ||\bm\Gamma^T \bm\Omega \bm\Gamma -\bm{I}_K ||_2\le \sqrt{t}/\sqrt{B_k} \wedge \delta_2.\nonumber
    \eean 
    Under condition $\frac{\sqrt{\lambda_{0,K+1}}||\bm\Gamma_\perp^T \bm\Omega \bm\Gamma_\perp ||_2^{1/2}}{\sqrt{\lambda_{0,l}} } \le \sqrt{t}$,
    we have
    \bea 
    \frac{\lambda_{0,K+1}}{\lambda_{l}}||\bm{U}_{(2)}^T \bm\Omega \bm{U}_{(2)}||_2 &\le&   
       \frac{1}{1-\delta_1}\frac{\lambda_{0,K+1}}{\lambda_{0,l}}||\bm{U}_{(2)}^T \bm\Omega \bm{U}_{(2)}||_2 \\
       &=&
    \frac{1}{1-\delta_1}\frac{\lambda_{0,K+1}}{\lambda_{0,l}}||\bm\Gamma_{\perp}^T \bm\Omega \bm\Gamma_\perp ||_2 ,\\
    &\le&   \frac{t}{1-\delta_1}\\
    &\le&  \frac{\delta}{1-\delta_1},
    \eea 
    where the last inequality is satisfied since $t\le \delta$, where $\delta = \Big(\frac{\lambda_{0,K+1}}{\lambda_{0,l}}\Big)^{1/4}(1-\delta_1) \wedge (d^{-1/8}-d^{1/2})^2\wedge \left( \frac{\delta_2}{2c_l(1+\delta_2)} \right)$.
    Under condition $\frac{\sqrt{\lambda_{0,K+1}}}{\sqrt{\lambda_{0,l}} }  || \bm\Omega - \bm{I}_p||_2 \le \sqrt{t}$,
    \bea 
\frac{\lambda_{0,K+1}}{\lambda_{l}}||\bm{U}_{(2)}^T \bm\Omega \bm{U}_{(2)}||_2 &\le&
\frac{1}{1-\delta_1}\frac{\lambda_{0,K+1}}{\lambda_{0,l}}||\bm{U}_{(2)}^T \bm\Omega \bm{U}_{(2)}||_2 \\
&\le&
  \frac{1}{1-\delta_1}\frac{\lambda_{0,K+1}}{\lambda_{0,l}}(1+||\bm\Omega -\bm{I}_p||_2) \\
  &\le& \frac{1}{1-\delta_1}\left(\frac{\lambda_{0,K+1}}{\lambda_{0,l}}+\sqrt{\frac{\lambda_{0,K+1}}{\lambda_{0,l}}} \sqrt{t} \right)\\
  &\le& \Big(\frac{\lambda_{0,K+1}}{\lambda_{0,l}}\Big)^{1/4},
    \eea 
    where the last inequality is satisfied by the following.
    Let $x = \frac{\lambda_{0,K+1}}{\lambda_{0,l}}$. We have $x \le  d <1 $ and set $\delta_1$ and $\delta$ to satisfy
 $\delta_1\le  1-d^{1/8} \le 1-x^{1/8}$ and $\sqrt{t} \le \sqrt{\delta} \le  d^{-1/8}-d^{1/2} \le x^{-1/8} - x^{1/2} $. Then, 
    \bea 
    \frac{1}{1-\delta_1}\left(\frac{\lambda_{0,K+1}}{\lambda_{0,l}}+\sqrt{\frac{\lambda_{0,K+1}}{\lambda_{0,l}}} \sqrt{t} \right) &=&\sqrt{x} \frac{\sqrt{x}+\sqrt{t}}{1-\delta_1} \\
    &\le&
    \sqrt{x} \frac{x^{-1/8} }{x^{1/8}} \\
    &=& x^{1/4}\\
    &=&\Big(\frac{\lambda_{0,K+1}}{\lambda_{0,l}}\Big)^{1/4}.
    \eea 
Thus, when $\frac{\sqrt{\lambda_{0,K+1}}||\bm\Gamma_\perp^T \bm\Omega \bm\Gamma_\perp ||_2^{1/2}}{\sqrt{\lambda_{0,l}} } \le \sqrt{t}$ or $\frac{\sqrt{\lambda_{0,K+1}}}{\sqrt{\lambda_{0,l}} }  || \bm\Omega - \bm{I}_p||_2 \le \sqrt{t}$, we obtain
\bean\label{eq:U2OU2} 
\frac{\lambda_{0,K+1}}{\lambda_{l}}||\bm{U}_{(2)}^T \bm\Omega \bm{U}_{(2)}||_2 &\le& \Big(\frac{\lambda_{0,K+1}}{\lambda_{0,l}}\Big)^{1/4} \vee  \frac{\delta}{1-\delta_1} \nonumber\\
&\le& \Big(\frac{\lambda_{0,K+1}}{\lambda_{0,l}}\Big)^{1/4} \vee  \frac{1}{2},
\eean 
by setting $\delta$ to satisfy $\frac{\delta}{1-\delta_1} \le \frac{1}{2}$.

    Under condition $\frac{\sqrt{\lambda_{0,K+1}}||\bm\Gamma_\perp^T \bm\Omega \bm\Gamma_\perp ||_2^{1/2}}{\sqrt{\lambda_{0,l}} } \le \sqrt{t}$,
    we have
    \bea 
\sqrt{\frac{\lambda_{0,K+1}}{\lambda_{l}}}||\bm{U}_{(1)}^T  \bm\Omega \bm{U}_{(2)}||_2 
&\le&  
\sqrt{\frac{2\lambda_{0,K+1}}{\lambda_{0,l}}}||\bm{U}_{(1)}^T  \bm\Omega^{1/2}||_2||  \bm\Omega^{1/2}\bm{U}_{(2)}||_2 
\\
&=&
||\bm{U}_{(1)}^T  \bm\Omega \bm{U}_{(1)} ||_2^{1/2}
\left(\frac{2\lambda_{0,K+1}}{\lambda_{0,l}}||\bm{U}_{(2)}^T  \bm\Omega \bm{U}_{(2)} ||_2\right)^{1/2} \\
&\le& ||\bm\Gamma^T \bm\Omega \bm\Gamma ||_2^{1/2}
 \left(\frac{2\lambda_{0,K+1}}{\lambda_{0,l}}||\bm\Gamma_\perp^T \bm\Omega \bm\Gamma_\perp ||_2\right)^{1/2}\\
 &\le& ( 1 + ||\bm{I}_K - \bm\Gamma^T \bm\Omega \bm\Gamma||_2)^{1/2}\sqrt{2t}\\
 &\le&  \sqrt{2 (1+\delta_2)t} ,
    \eea 
    and under condition $\frac{\sqrt{\lambda_{0,K+1}}}{\sqrt{\lambda_{0,l}} }  || \bm\Omega - \bm{I}_p||_2 \le \sqrt{t}$,
    \bea 
    \sqrt{\frac{\lambda_{0,K+1}}{\lambda_{l}}}||\bm{U}_{(1)}^T  \bm\Omega \bm{U}_{(2)}||_2 &\le& \sqrt{\frac{\lambda_{0,K+1}}{\lambda_{l}}}|| \bm\Omega -\bm{I}_p||_2 \\
    &\le& \sqrt{\frac{2\lambda_{0,K+1}}{\lambda_{0,l}}}|| \bm\Omega -\bm{I}_p||_2 \\
    &\le& \sqrt{2t}\\
    &\le& \sqrt{2 (1+\delta_2)t},
    \eea
    where the first inequality is satisfied by Lemma \ref{lemma:blocknorm2}.
Thus, when $\frac{\sqrt{\lambda_{0,K+1}}||\bm\Gamma_\perp^T \bm\Omega \bm\Gamma_\perp ||_2^{1/2}}{\sqrt{\lambda_{0,l}} } \le \sqrt{t}$ or $\frac{\sqrt{\lambda_{0,K+1}}}{\sqrt{\lambda_{0,l}} }  || \bm\Omega - \bm{I}_p||_2 \le \sqrt{t}$,
\bean\label{eq:U1OU2} 
\sqrt{\frac{\lambda_{0,K+1}}{\lambda_{l}}}||\bm{U}_{(1)}^T  \bm\Omega \bm{U}_{(2)}||_2 \le \sqrt{2 (1+\delta_2)t}.
\eean

Likewise, we obtain 
\bea 
\sqrt{\frac{\lambda_{0,K+1}}{\lambda_{l}}}||\bm{U}_{(2)}^T \bm\Omega \bm{u}_{0,l} ||_2 \le \sqrt{2 (1+\delta_2)t}.
\eea

    We have 
\bea 
 \bm{B}_{11}  
&=& (\bm{U}_{(1)}^T \bm\Omega \bm{U}_{(1)}  - \bm{I}_{K-1}
 +\bm{I}_{K-1} -\lambda_l \bm\Lambda_{(1)}^{-1} - \bm{B}\bm{C}^{-1}\bm{B}^T)^{-1}\\
 &=& \bm{D}_1   (\bm{I}_{K-1}  +( \bm{U}_{(1)}^T \bm\Omega \bm{U}_{(1)}  - \bm{I}_{K-1} -\bm{B}\bm{C}^{-1}\bm{B}^T )\bm{D}_1)^{-1},
 %\bm{U}_{(1)}^T \bm\Omega \bm{U}_{(1)}  - \bm{I}_{K-1}
 %+\bm{I}_{K-1} -\lambda_k \bm\Lambda_{(1)}^{-1} - \bm{B}\bm{C}^{-1}\bm{B}^T)^{-1}
      %\sqrt{\lambda_{k}}\bm\Lambda_{(1)}^{-1/2} \bm{B}_{11}  &=&
\eea 
where $\bm{D}_1= (\bm{I}_{K-1} -\lambda_l \bm\Lambda_{(1)}^{-1})^{-1} = \textup{diag}\left(\frac{\lambda_{0,1}}{\lambda_{0,1}-\lambda_l},\ldots, \frac{\lambda_{0,l-1}}{\lambda_{0,l-1}-\lambda_l},\frac{\lambda_{0,l+1}}{\lambda_{0,l+1}-\lambda_l},\ldots, \frac{\lambda_{0,K}}{\lambda_{0,K}-\lambda_l} \right)$
and %\footnote{JL: $\bm{D}_1= ... = \textup{diag}(\frac{\lambda_{0,1}}{\lambda_{0,1} - \lambda_l},\ldots  )$ 가 아닌가? }
\bean\label{eq:D1upper}
||\bm{D}_1||_2 \le 2/(1-1/c) = 1/(4\delta_2)
\eean 
by Lemma \ref{lemma:eigenratios} 
%\color{purple}
where $\delta_2$ is set to satisfy $2/(1-1/c) = 1/(4\delta_2)$.
%\color{black}

Let $c_l = \frac{1}{1- (\lambda_{0,K+1}/\lambda_{0,l})^{1/4}\vee \frac{1}{2}}$.
We have 
\bea 
\lambda_l||\bm{C}^{-1} ||_2/\lambda_{0,K+1} %&=& \lambda_l/\{\lambda_{0,K+1}\lambda_{\min}( \bm{U}_{(2)}^T \bm\Omega  \bm{U}_{(2)}  - \lambda_l\bm\Lambda_{(2)}^{-1})\} \\
    &\le& \frac{\lambda_l}{\lambda_{0,K+1}\{\lambda_l/||\bm\Lambda_{(2)}||_2 - ||\bm{U}_{(2)}^T \bm\Omega  \bm{U}_{(2)} ||_2 \}} \\
    &=& \frac{1}{1- \lambda_{0,K+1}||\bm{U}_{(2)}^T \bm\Omega  \bm{U}_{(2)} ||_2/\lambda_l} \\
    &\le& \frac{1}{1-  (\lambda_{0,K+1}/\lambda_{0,l})^{1/4}\vee 1/2}\\
    &\le& c_l,
    \eea 
    where the first inequality is satisfied by Lemma \ref{lemma:A-B}, and the second inequality is satisfied by \eqref{eq:U2OU2}.
    To apply Lemma \ref{lemma:A-B}, we show $\lambda_l/||\bm\Lambda_{(2)}||_2 > ||\bm{U}_{(2)}^T \bm\Omega  \bm{U}_{(2)} ||_2$.
    Since $\lambda_{0,K+1} = ||\bm\Lambda_{(2)}||_2$, it suffices to show $1> \lambda_{0,K+1}||\bm{U}_{(2)}^T \bm\Omega  \bm{U}_{(2)} ||_2/\lambda_l$, which is shown by \eqref{eq:U2OU2}. 
    %\footnote{SK : 첫번째 등식이 성립하기 위해서 $( \bm{U}_{(2)}^T \bm\Omega  \bm{U}_{(2)}  - \lambda_l\bm\Lambda_{(2)}^{-1})$ 의 고유값이 모두 양수거나 모두 음수여야 한다. (음수이면 $-$ 추가) 그런데 wely's 부등식 사용하면 $\lambda_\max$가 0보다 작음을 보일 수 있다. $\lambda_{\max}( \bm{U}_{(2)}^T \bm\Omega  \bm{U}_{(2)}  - \lambda_l\bm\Lambda_{(2)}^{-1}) \leq ||\bm{U}_{(2)}^T \bm\Omega  \bm{U}_{(2)}||_2 - \lambda_l/||\bm\Lambda_{(2)}||_2<0$ . 근데 그러면 첫번째 부등식이 성립안하는듯 }
    %where the last inequality is satisfied since $1/\lambda_k$ and $c_2^2p/(n\lambda_k)$ are sufficiently small and $||\bm\Lambda_{(2)}||_2$ is bounded.
We also have
\bean\label{eq:UomeagaU} 
|| \bm{U}_{(1)}^T \bm\Omega \bm{U}_{(1)}  - \bm{I}_{K-1} -\bm{B}\bm{C}^{-1}\bm{B}^T || &\le&
|| \bm{U}_{(1)}^T \bm\Omega \bm{U}_{(1)} - \bm{I}_{K-1}|| + 
||\bm{B}||_2^2 ||\bm{C}^{-1}||_2\nonumber\\
& \le&|| \bm{U}_{(1)}^T \bm\Omega \bm{U}_{(1)} - \bm{I}_{K-1}|| + 
c_l|| \bm{U}_{(1)}^T \bm\Omega  \bm{U}_{(2)}  ||^2\lambda_{0,K+1}/\lambda_l  \nonumber\\
&\le&\delta_2+  2c_l (1+\delta_2)t \nonumber\\
&\le& 2\delta_2,
\eean 
where the third inequality is satisfied by \eqref{eq:U1OU1} and \eqref{eq:U1OU2}, and the last inequality is satisfied by setting $\delta$ to satisfy $t \le \delta\le   \Big(\frac{\delta_2}{2c_l(1+\delta_2)}\Big)$.
Thus,
\bea
 &&||     \sqrt{\lambda_{l}}\bm\Lambda_{(1)}^{-1/2} \bm{B}_{11}  ||_2 \\
  &\le &||\sqrt{\lambda_{l}}\bm\Lambda_{(1)}^{-1/2}  \bm{D}_1 ||_2  
  ||(\bm{I}_{K-1}  +( \bm{U}_{(1)}^T \bm\Omega \bm{U}_{(1)}  - \bm{I}_{K-1} -\bm{B}\bm{C}^{-1}\bm{B}^T )\bm{D}_1)^{-1} ||_2 \\
 &\le& \frac{ ||\sqrt{\lambda_{l}}\bm\Lambda_{(1)}^{-1/2}  \bm{D}_1 ||_2   }{1 - 
 ||\bm{D}_1||_2||\bm{U}_{(1)}^T \bm\Omega \bm{U}_{(1)}  - \bm{I}_{K-1} -\bm{B}\bm{C}^{-1}\bm{B}^T ||_2
 }\\
  &\le& 2||\sqrt{\lambda_{l}}\bm\Lambda_{(1)}^{-1/2}  \bm{D}_1 ||_2 ,
 \eea %\footnote{SK : nonsingular matrix 에 대해서 $||A^{-1}||\geq ||A||^{-1}$ 가 성립하기 때문에 첫번째 부등식 증명하려면 다른 방법 필요}
 where the last inequality is satisfied by \eqref{eq:D1upper} and \eqref{eq:UomeagaU}.

%\bea 
%||\bm{B}^T ()^{-1} \bm{B}\bm{C}^{-1}||_2 &\le&
% ||\bm{B}_{12}|| ||\bm{B}||_2 \eea 

Next, we have
\bea 
|| \sqrt{\lambda_{l}}\bm\Lambda_{(1)}^{-1/2}\bm{B}_{12}  ||_2 &\le& ||  \sqrt{\lambda_{l}}\bm\Lambda_{(1)}^{-1/2}\bm{B}_{11}||_2  ||\bm{U}_{(1)}^T \bm\Omega \bm{U}_{(2)}||_2 ||\bm{C}^{-1}||_2 \\
&=&||  \sqrt{\lambda_{l}}\bm\Lambda_{(1)}^{-1/2}\bm{B}_{11}||_2  \lambda_{0,K+1}||\bm{U}_{(1)}^T \bm\Omega \bm{U}_{(2)}||_2/ \lambda_l  (\lambda_l||\bm{C}^{-1}||_2 /\lambda_{0,K+1}) \\
&\le& 2||  \sqrt{\lambda_{l}}\bm\Lambda_{(1)}^{-1/2}\bm{D}_{1}||_2  
\frac{\sqrt{\lambda_{0,K+1}}}{\sqrt{\lambda_l}} \sqrt{2 (1+\delta_2)t} c_l\\
&=&  2c_l \sqrt{2 (1+\delta_2)t} ||  \sqrt{\lambda_{l}}\bm\Lambda_{(1)}^{-1/2}\bm{D}_{1}||_2 \frac{\sqrt{\lambda_{0,K+1}}}{\sqrt{\lambda_l}} ,
\eea 
and
\bea 
|| \sqrt{\lambda_{l}}\bm\Lambda_{(2)}^{-1/2}\bm{C}^{-1} ||_2 &=& 
\left\|
 \sqrt{\lambda_l}\bm\Lambda_{(2)}^{-1/2} [\bm{U}_{(2)}^T \bm\Omega  \bm{U}_{(2)} - \lambda_l\bm\Lambda_{(2)}^{-1}]^{-1}\right\|_2
\\&=& 
 \left\|[ \bm{U}_{(2)}^T \bm\Omega  \bm{U}_{(2)} (\sqrt{\lambda_l}\bm\Lambda_{(2)}^{-1/2} )^{-1}-  \sqrt{\lambda_l}\bm\Lambda_{(2)}^{-1/2}]^{-1}\right\|_2\\
 &\le & 1/\left[\lambda_{\min} (\sqrt{\lambda_l}\bm\Lambda_{(2)}^{-1/2}) - \left\|\bm{U}_{(2)}^T \bm\Omega  \bm{U}_{(2)} (\sqrt{\lambda_l}\bm\Lambda_{(2)}^{-1/2})^{-1} \right\| \right]\\
 &=& 1/ \left[\sqrt{\lambda_l/ ||\bm\Lambda_{(2)} ||_2 } -  \left \|\bm{U}_{(2)}^T \bm\Omega  \bm{U}_{(2)} (\sqrt{\lambda_l}\bm\Lambda_{(2)}^{-1/2} )^{-1}\right\| \right] \\
 &\le& 1/ \left[\sqrt{\lambda_l/ ||\bm\Lambda_{(2)} ||_2 } -  \lambda_l^{-1/2}\left\|\bm{U}_{(2)}^T \bm\Omega  \bm{U}_{(2)}\right\|_2 \left\|\bm\Lambda_{(2)}\right\|_2^{1/2} \right] \\
 &=& \sqrt{\frac{\lambda_{0,K+1}}{\lambda_l}} \left(\frac{1}{1-||\bm{U}_{(2)}^T \bm\Omega  \bm{U}_{(2)}||_2 \lambda_{0,K+1} /\lambda_l } \right)\\
 &\le& c_l\sqrt{\frac{\lambda_{0,K+1}}{\lambda_l}} ,
\eea 
where the first inequality is satisfied by Lemma \ref{lemma:A-B}.
To apply Lemma \ref{lemma:A-B}, we have to show 
$\lambda_{\min} (\sqrt{\lambda_l}\bm\Lambda_{(2)}^{-1/2}) > ||\bm{U}_{(2)}^T \bm\Omega  \bm{U}_{(2)} (\sqrt{\lambda_l}\bm\Lambda_{(2)}^{-1/2})^{-1} ||_2$.
Since 
$$||\bm{U}_{(2)}^T \bm\Omega  \bm{U}_{(2)} (\sqrt{\lambda_l}\bm\Lambda_{(2)}^{-1/2} )^{-1}||_2 \le \lambda_l^{-1/2}||\bm{U}_{(2)}^T \bm\Omega  \bm{U}_{(2)}||_2 ||\bm\Lambda_{(2)}||_2^{1/2},$$ it suffices to show $\sqrt{\lambda_l/ ||\bm\Lambda_{(2)} ||_2 } > \lambda_l^{-1/2}||\bm{U}_{(2)}^T \bm\Omega  \bm{U}_{(2)}||_2 ||\bm\Lambda_{(2)}||_2^{1/2}$, equivalently $1>||\bm{U}_{(2)}^T \bm\Omega  \bm{U}_{(2)}||_2 \lambda_{0,K+1} /\lambda_l $, which is shown by \eqref{eq:U2OU2}.

%\footnote{SK : nonsingular matrix 에 대해서 $||A^{-1}||\geq ||A||^{-1}$ 가 성립하기 때문에 첫번째 부등식 증명하려면 다른 방 법 필요}

We have
\bea
 &&||      \bm{B}_{11}  ||_2 \\
  &\le &||\bm{D}_1 ||_2  
  ||(\bm{I}_{K-1}  +( \bm{U}_{(1)}^T \bm\Omega \bm{U}_{(1)}  - \bm{I}_{K-1} -\bm{B}\bm{C}^{-1}\bm{B}^T )\bm{D}_1)^{-1} ||_2 \\
 &\le& \frac{ ||  \bm{D}_1 ||_2   }{1 - 
 ||\bm{D}_1||_2||\bm{U}_{(1)}^T \bm\Omega \bm{U}_{(1)}  - \bm{I}_{K-1} -\bm{B}\bm{C}^{-1}\bm{B}^T ||_2
 }\\
 &\le&2||  \bm{D}_1 ||_2\\
  &\le& 1/(2\delta_2),
 \eea %\footnote{SK : nonsingular matrix 에 대해서 $||A^{-1}||\geq ||A||^{-1}$ 가 성립하기 때문에 두번째 부등식 증명하려면 다른 방 법 필요}
 by \eqref{eq:D1upper} and \eqref{eq:UomeagaU}.
And, we have
\bea  
|| \sqrt{\lambda_{l}}\bm\Lambda_{(2)}^{-1/2}\bm{B}_{12}^T ||_2 &\le& ||\sqrt{\lambda_{l}}\bm\Lambda_{(2)}^{-1/2}\bm{C}^{-1}||_2  ||\bm{U}_{(1)}^T \bm\Omega \bm{U}_{(2)}||_2 ||\bm{B}_{11}||_2 \\
&\le& \frac{c_l}{2\delta_2}||\bm{U}_{(1)}^T \bm\Omega \bm{U}_{(2)}||_2 \sqrt{\lambda_{0,K+1}}/\sqrt{\lambda_l}\\
&\le& c_l\sqrt{2 (1+\delta_2)t}/(2\delta_2) ,
\eea 
\bea 
||\sqrt{\lambda_{l}}\bm\Lambda_{(2)}^{-1/2}\bm{B}_{22} ||_2 &\le& 
||\sqrt{\lambda_{l}}\bm\Lambda_{(2)}^{-1/2}\bm{C}^{-1}||_2  (1 + ||\bm{B}^T \bm{B}_{11} \bm{B}\bm{C}^{-1}||_2)  \\
&\le&  ||\sqrt{\lambda_{l}}\bm\Lambda_{(2)}^{-1/2}\bm{C}^{-1}||_2  \left(1 + ||\bm{B}_{11}||_2 \frac{\lambda_{0,K+1}|| \bm{U}_{(1)}^T \bm\Omega  \bm{U}_{(2)}  ||_2^2 }{\lambda_l} \frac{\lambda_l ||\bm{C}^{-1}||_2}{\lambda_{0,K+1}} \right)  \\
&\le& c_l \sqrt{\lambda_{0,K+1}/\lambda_l }  (1 +  c_l (1+\delta_2)t/\delta_2),
\eea 
\bea 
 \sqrt{\lambda_{l}}\bm\Lambda_{(1)}^{-1/2}  \bm{D}_1 &=&\textup{diag}(\frac{\sqrt{\lambda_l\lambda_{0,1}}}{\lambda_{0,1}-\lambda_l},\ldots, \frac{\sqrt{\lambda_l\lambda_{0,l-1}}}{\lambda_{0,l-1}-\lambda_l},\frac{\sqrt{\lambda_{l}\lambda_{0,l+1}}}{\lambda_{0,l+1}-\lambda_l},\ldots, \frac{\sqrt{\lambda_l\lambda_{0,K}}}{\lambda_{0,K}-\lambda_l} ),%\\
%||\bm{D}_1||_2 &=& 1/\min_{l\in [K]\setminus \{l\}}|\lambda_/\lambda_{0,l} -1| ,\\
%||\sqrt{\lambda_{k}}\bm\Lambda_{(1)}^{-1/2}  \bm{D}_1 ||_2 &=& \max_{l\in [K]\setminus \{k\}} \frac{\sqrt{\lambda_k/\lambda_{0,l}}}{|\lambda_k/\lambda_{0,l} -1|} .
\eea 
and 
\bea 
||\sqrt{\lambda_{l}}\bm\Lambda_{(1)}^{-1/2}  \bm{D}_1||_2 \le C_1\sqrt{B_k},
\eea 
for some positive constant $C_1$ dependent on $c$ by Lemma \ref{lemma:eigenratios}.
%By Lemma \ref{lemma:eigenratios}, we have
%\bea 
%||\bm{D}_1||_2 \vee ||\sqrt{\lambda_{l}}\bm\Lambda_{(1)}^{-1/2}  \bm{D}_1 ||_2 \le C_1,
%\eea 
%for some positive constant $C_1$ dependent on $c$.

Collecting the inequalities, we obtain 
\bea 
&&||\sqrt{\lambda_{l}}\bm\Lambda_{(1)}^{-1/2}\bm{B}_{11}\bm{U}_{(1)}^T \bm\Omega \bm{u}_{0,l}  +  \sqrt{\lambda_{l}}\bm\Lambda_{(1)}^{-1/2}\bm{B}_{12} \bm{U}_{(2)}^T \bm\Omega \bm{u}_{0,l}  ||_2\\
&\le& ||\sqrt{\lambda_{l}}\bm\Lambda_{(1)}^{-1/2}\bm{B}_{11}||_2 ||\bm{U}_{(1)}^T \bm\Omega \bm{u}_{0,l}||_2 +
||\sqrt{\lambda_{l}}\bm\Lambda_{(1)}^{-1/2}\bm{B}_{12} ||_2 ||\bm{U}_{(2)}^T \bm\Omega \bm{u}_{0,l}  ||_2 \\
%&\lesssim& t_*/K + \frac{\sqrt{\lambda_{0,K+1}}}{\sqrt{\lambda_l}}||\bm{U}_{(2)}^T \bm\Omega \bm{u}_{0,l}  ||_2 \\
&\lesssim& ||\sqrt{\lambda_{l}}\bm\Lambda_{(1)}^{-1/2}  \bm{D}_1 ||_2 (\sqrt{t}/\sqrt{B_k} + \sqrt{t}) \\
&\lesssim& \sqrt{t} ,
\eea 
where the last inequality is satisfied since $B_k\le 1$,
and 
\bea 
&&||      \sqrt{\lambda_{l}}\bm\Lambda_{(2)}^{-1/2}\bm{B}_{12}^T \bm{U}_{(1)}^T \bm\Omega \bm{u}_{0,l}  +  \sqrt{\lambda_{l}}\bm\Lambda_{(2)}^{-1/2}\bm{B}_{22}\bm{U}_{(2)}^T \bm\Omega \bm{u}_{0,l}||_2 \\
&\le& ||      \sqrt{\lambda_{l}}\bm\Lambda_{(2)}^{-1/2}\bm{B}_{12}^T ||_2 ||\bm{U}_{(1)}^T \bm\Omega \bm{u}_{0,l}||_2 + ||\sqrt{\lambda_{l}}\bm\Lambda_{(2)}^{-1/2}\bm{B}_{22}||_2 ||\bm{U}_{(2)}^T \bm\Omega \bm{u}_{0,l}||_2\\
&\le&  ||      \sqrt{\lambda_{l}}\bm\Lambda_{(2)}^{-1/2}\bm{B}_{12}^T ||_2 ||\bm{U}_{(1)}^T \bm\Omega \bm{u}_{0,l}||_2  \\
&&+ \sqrt{\frac{\lambda_l}{\lambda_{0,K+1}}}||\sqrt{\lambda_{l}}\bm\Lambda_{(2)}^{-1/2}\bm{B}_{22}||_2 \sqrt{\frac{\lambda_{0,K+1}}{\lambda_l}}||\bm{U}_{(2)}^T \bm\Omega \bm{u}_{0,l}||_2\\
&\lesssim& c_l\sqrt{t}.
\eea
Then, %when $ K || \bm\Gamma^T \bm\Omega \bm\Gamma  -\bm{I}_K ||_2 \vee \frac{\sqrt{\lambda_{0,K+1}}||\bm\Gamma_\perp^T \bm\Omega \bm\Gamma_\perp ||^{1/2}}{\sqrt{\lambda_{0,l}} } \le t_*$ and $t_*\le \delta$ for some positive constant $\delta$,
we obtain
\bea 
1- (\bm{u}_l^T \bm{u}_{0,l})^2   \le  Ct ,
\eea 
for some positive constant $C$ dependent on $c$ and $d$.
Thus, we obtain
    \bea 
    P( \sup_{l=1,\ldots, k} \{1-(\bm{u}_l^T \bm{u}_{0,l})^2\}  > Ct ) &\le&
    P\left( ||\bm\Gamma_\perp^T \bm\Sigma_0^{-1/2}\bm\Sigma \bm\Sigma_0^{-1/2} \bm\Gamma_\perp ||_2^{1/2}\frac{\sqrt{\lambda_{0,K+1}}}{\sqrt{\lambda_{0,k}} } > \sqrt{t} \right) \\
    &&+   P\left( || \bm\Gamma^T \bm\Sigma_0^{-1/2}\bm\Sigma \bm\Sigma_0^{-1/2}\bm\Gamma -\bm{I}_K||_2 > \frac{\sqrt{t}}{\sqrt{B_k}} \wedge \delta_2 \right) \\
    &&+  P\left(\sup_{l=1,\ldots,k}|\frac{\lambda_l}{\lambda_{0,l}}-1 | > \delta_1 \right).
    %P( \frac{||\bm\Gamma_\perp^T \bm\Sigma \bm\Gamma_\perp ||^{1/2}}{\sqrt{\lambda_{0,k}} } > Ct) \\
    %&&+  P( K|| \bm{D}_0^{-1/2} \bm\Gamma^T\bm\Sigma\bm\Gamma \bm{D}_0^{-1/2} -\bm{I}_K||_2 > Ct) , 
    \eea 
    %for all $t \le \frac{1}{4} \wedge \frac{\delta_2}{2(1+\delta_2)}$.

    %where $t_* = C\sup_{l=1,\ldots,k}\{ \sup_{i\le l-1}\lambda_{0,l}/\lambda_{0,i} )\vee (\sup_{i\ge l+1}\lambda_{0,i}/\lambda_{0,l} ) \} t_1^2/K^2 + t_2^2 $, where $C$ is a positive constant.

%Assume $\max_{k'\in [K]\setminus \{k\}}|1-\lambda_k/ \lambda_{k'}|^{-1}$ and $||\bm\Lambda_{(2)}^{-1}||_2$ are bounded, and assume $\lambda_{k'}/\lambda_k$ bounded.

\end{proof}

\section{Proof of Theorem \ref{corr:IWeigenvector}}\label{sec:pf3.6}
We give the proof of Theorem \ref{corr:IWeigenvector}.

\begin{proof}[Proof of Theorem \ref{corr:IWeigenvector}]
    Suppose $p >n$.
    By Theorem \ref{thm:highevector} with $t = M_n\epsilon_n $, we obtain
    \bea 
    &&\pi\left( \sup_{l=1,\ldots, k} \{1-(\bm{u}_l^T \bm{u}_{0,l})^2\}  > CM_n\epsilon_n \mid \bbX_n \right) \\
    &\le&
    \pi\left( ||\bm\Gamma_\perp^T \bm\Sigma_0^{-1/2}\bm\Sigma \bm\Sigma_0^{-1/2} \bm\Gamma_\perp ||_2^{1/2}\frac{\sqrt{\lambda_{0,K+1}}}{\sqrt{\lambda_{0,k}} } > \sqrt{M_n\epsilon_n}\Bigm| \bbX_n \right) \\
    &&+   \pi\left( || \bm\Gamma^T \bm\Sigma_0^{-1/2}\bm\Sigma \bm\Sigma_0^{-1/2}\bm\Gamma -\bm{I}_K||_2 > \frac{\sqrt{M_n\epsilon_n}}{\sqrt{B}} \wedge \delta_2(c)\Bigm| \bbX_n \right) \\
    &&+ \pi\left(\sup_{l=1,\ldots,k}\left|\frac{\lambda_l}{\lambda_{0,l}}-1 \right| > \delta_1(c) \Bigm| \bbX_n \right),
    %P( \frac{||\bm\Gamma_\perp^T \bm\Sigma \bm\Gamma_\perp ||^{1/2}}{\sqrt{\lambda_{0,k}} } > Ct) \\
    %&&+  P( K|| \bm{D}_0^{-1/2} \bm\Gamma^T\bm\Sigma\bm\Gamma \bm{D}_0^{-1/2} -\bm{I}_K||_2 > Ct) , 
    \eea 
    for all sufficiently large $n$.
    We show that each term in the upper bound converges to $0$ in probability.
    
    Let $\tilde\epsilon_n =  K^3/n + \frac{\lambda_{0,K+1}}{\lambda_{0,k}} \Big(\frac{p}{n}\vee 1\Big)$ and $\tilde{M}_n = 1/\sqrt{\tilde\epsilon_n} $. 
    Since $\tilde{M}_n\tilde\epsilon_n\lra 0$ and $\tilde{M}_n\lra \infty$, we have
    \bea \pi\left(\sup_{l=1,\ldots,k}\left|\frac{\lambda_l}{\lambda_{0,l}}-1 \right| > \delta_1(c) \Bigm| \bbX_n \right) 
&\le& \pi\left(\sup_{l=1,\ldots,k}\left|\frac{\lambda_l}{\lambda_{0,l}}-1 \right| > \tilde{M}_n\tilde\epsilon_n \Bigm| \bbX_n \right),
    \eea 
    for all sufficiently large $n$, and this converges to $0$ in probability by Theorem \ref{corr:IWeigenvalue}.

We have
    \bea 
    &&\pi\left( ||\bm\Gamma_\perp^T \bm\Sigma_0^{-1/2}\bm\Sigma \bm\Sigma_0^{-1/2} \bm\Gamma_\perp ||_2^{1/2}\frac{\sqrt{\lambda_{0,K+1}}}{\sqrt{\lambda_{0,k}} } > \sqrt{\epsilon_n}\Bigm| \bbX_n \right)\\
    &\le&\pi\left( ||\bm\Gamma_\perp^T \bm\Sigma_0^{-1/2}\bm\Sigma \bm\Sigma_0^{-1/2} \bm\Gamma_\perp ||_2^{1/2}\frac{\sqrt{\lambda_{0,K+1}}}{\sqrt{\lambda_{0,k}} } > \sqrt{M_n}\sqrt{\frac{\lambda_{0,K}}{\lambda_k}} \sqrt{\frac{p}{n}} \Bigm|\bbX_n \right),
    \eea 
    which converges to $0$ in probability by Lemma \ref{lem:IWeigenvalue}.
   
We have 
    \bea 
    &&\pi\left( || \bm\Gamma^T \bm\Sigma_0^{-1/2}\bm\Sigma \bm\Sigma_0^{-1/2}\bm\Gamma -\bm{I}_K||_2 > \frac{\sqrt{M_n\epsilon_n}}{\sqrt{B_k}} \wedge \delta_2(c)\Bigm| \bbX_n \right) \\
    &\le& \pi \left( || \bm\Gamma^T \bm\Sigma_0^{-1/2}\bm\Sigma \bm\Sigma_0^{-1/2}\bm\Gamma -\bm{I}_K||_2 > \frac{\sqrt{M_n\epsilon_n}}{\sqrt{B_k}} \Bigm| \bbX_n \right)\\
    &&+\pi\left( || \bm\Gamma^T \bm\Sigma_0^{-1/2}\bm\Sigma \bm\Sigma_0^{-1/2}\bm\Gamma -\bm{I}_K||_2 > \delta_2(c)\Bigm|\bbX_n \right)\\
    &\le& \pi\left( || \bm\Gamma^T \bm\Sigma_0^{-1/2}\bm\Sigma \bm\Sigma_0^{-1/2}\bm\Gamma -\bm{I}_K||_2 > \sqrt{M_n}\sqrt{K/n} \Bigm| \bbX_n \right)\\
    &&\pi\left( || \bm\Gamma^T \bm\Sigma_0^{-1/2}\bm\Sigma \bm\Sigma_0^{-1/2}\bm\Gamma -\bm{I}_K||_2 > (n/K)^{1/4}\sqrt{K/n} \Bigm|\bbX_n \right)
    \eea 
    for all sufficiently large $n$, where the last inequality satisfied since $ (n/K)^{1/4}\sqrt{K/n}  \le  (K/n)^{1/4}  \le \delta_2(c) $ for all sufficiently large $n$.
    By Lemma \ref{lem:IWeigenvalue}, the upper bound converges to $0$ in probability.

    Suppose $p\le n$. It suffices to show 
    \bea 
    &&\pi\left( || \bm\Sigma_0^{-1/2}\bm\Sigma \bm\Sigma_0^{-1/2} -\bm{I}_p||_2\frac{\sqrt{\lambda_{0,K+1}}}{\sqrt{\lambda_{0,k}} } > \sqrt{M_n\epsilon_n}\Bigm| \bbX_n\right)\\
    &\le&\pi\left( || \bm\Sigma_0^{-1/2}\bm\Sigma \bm\Sigma_0^{-1/2} -\bm{I}_p||_2\frac{\sqrt{\lambda_{0,K+1}}}{\sqrt{\lambda_{0,k}} } > 
    \sqrt{M_n}\frac{\sqrt{\lambda_{0,K+1}}}{\sqrt{\lambda_{0,k}} }\sqrt{p/n}
    \Bigm| \bbX_n \right) \\
    &=&\pi \left( || \bm\Sigma_0^{-1/2}\bm\Sigma \bm\Sigma_0^{-1/2} -\bm{I}_p||_2 > 
    \sqrt{M_n}\sqrt{p/n}
    \Bigm|\bbX_n \right) 
    \eea 
    converges to $0$ in probability.
By Theorem 1 in \cite{lee2018optimal}, we have
    \bea%n\label{eq:gamma1} 
    P\left(\pi\left(||\bm\Sigma_0^{-1/2} \bm\Sigma\bm\Sigma_0^{-1/2} -\bm{I}_p||_2  >\sqrt{M_n} \sqrt{p/n}\Bigm|\bbX_n \right) \right)&\lesssim& 
      \frac{1}{M_np/n}\\
      &\le& \frac{1}{M_n},
    \eea%n 
    which converges to $0$.
    Thus,  
    \bea 
   \pi\left( || \bm\Sigma_0^{-1/2}\bm\Sigma \bm\Sigma_0^{-1/2} -\bm{I}_p||_2\frac{\sqrt{\lambda_{0,K+1}}}{\sqrt{\lambda_{0,k}} } > \sqrt{M_n\epsilon_n}\Bigm| \bbX_n \right)
     \eea 
     converges to $0$ in probability.

\end{proof}

\section{Proof of Proposition \ref{prop:minimaxlower}} \label{sec:pf3.7}

\begin{proof}[Proof of Proposition \ref{prop:minimaxlower}]
    First, we show 
    \bea 
1-  (\hat{\bm\xi}_p^T\bm\xi_p)^2  &\ge& \frac{1}{2}\min \{ ||\hat{\bm\xi}_p - \bm\xi_p||_2^2,  ||(-\hat{\bm\xi}_p) - \bm\xi_p||_2^2\}  .
    \eea 
    When $\hat{\bm\xi}_p^T\bm\xi_p \ge 0$, we have
    \bea 
    1-  (\hat{\bm\xi}_p^T\bm\xi_p)^2 &=&
    (1 + \hat{\bm\xi}_p^T\bm\xi_p) (1 - \hat{\bm\xi}_p^T\bm\xi_p) \\
    &\ge& (1 - \hat{\bm\xi}_p^T\bm\xi_p) \\
    &=& \frac{1}{2}||\hat{\bm\xi}_p - \bm\xi_p||_2^2.
    \eea 
    When $\hat{\bm\xi}_p^T\bm\xi_p < 0$, we have
    \bea 
    1-  (\hat{\bm\xi}_p^T\bm\xi_p)^2 &=&
    (1 + \hat{\bm\xi}_p^T\bm\xi_p) (1 - \hat{\bm\xi}_p^T\bm\xi_p) \\
    &\ge& (1 + \hat{\bm\xi}_p^T\bm\xi_p) \\
    &=& \frac{1}{2}||(-\hat{\bm\xi}_p) - \bm\xi_p||_2^2.
    \eea 
    Thus,
    \bea 
1-  (\hat{\bm\xi}_p^T\bm\xi_p)^2  &\ge& \frac{1}{2}\min \{ ||\hat{\bm\xi}_p - \bm\xi_p||_2^2,  ||(-\hat{\bm\xi}_p) - \bm\xi_p||_2^2\} .
    \eea 
    Then, 
    \bea 
    \inf_{\hat{\bm\xi}_p} \sup_{\bm\xi_p\in \mathbb{S}^{p-1}} E(1-  (\hat{\bm\xi}_p^T\bm\xi_p)^2  ) &\ge&
 \frac{1}{2}\inf_{\hat{\bm\xi}_p} \sup_{\bm\xi_p\in \mathbb{S}^{p-1}} E(||\hat{\bm\xi}_p - \bm\xi_p||_2^2),
    \eea 
    because $-\bm{\hat\xi}_p$ is also an estimator when $\bm{\hat\xi}_p$ is an estimator.
    Since   
    \bea 
    \inf_{\hat{\bm\xi}_p} \sup_{\bm\xi_p\in \mathbb{S}^{p-1}} E( ||\hat{\bm\xi}_p - \bm\xi_p||_2^2) 
    &\gtrsim & \min \Big\{ \frac{1+\nu_p}{\nu_p^2}\frac{p}{n}  ,1\Big\} 
    \eea 
    (see Example 15.19 in \cite{wainwright2019high}), the prove is completed.
\end{proof}

\section{Proof of Theorem \ref{thm:eigenbias}}
We give the proof of Theorem \ref{thm:eigenbias}.

\begin{proof}[Proof of Theorem \ref{thm:eigenbias}]
    Let 
    \bea 
    \bm\Omega = \begin{pmatrix}
        \bm\Omega_{11} & \bm\Omega_{12}\\
        \bm\Omega_{21} & \bm\Omega_{22}
    \end{pmatrix}, ~
    \bar{\bm\Omega}_{11}=\begin{pmatrix}
        \bm\Omega_{11} & \bm{O}\\
        \bm{O} & \bm{O}
    \end{pmatrix}
    ~,\bm{V} = \begin{pmatrix}
        \bm{O} & \bm\Omega_{12}\\
        \bm\Omega_{21} & \bm\Omega_{22}
    \end{pmatrix},
    \eea 
    and let $\lambda_1 \ge  \ldots \ge \lambda_K$ denote the eigenvalues of $\bm\Omega_{11}\in \calC_K$ and $\bm\xi_1,\ldots,\bm\xi_K\in \bbR^{K}$ denote the corresponding normalized eigenvectors. Then,
    $\lambda_1 \ge  \ldots \ge \lambda_K$ are the leading $K$ eigenvalues of $\bar{\bm\Omega}_{11}$ and the rest eigenvalues are $0$. The corresponding eigenvectors of $\bar{\bm\Omega}_{11}$ are 
    \bea 
    (\bm\xi_1^T,\bm{0}_{p-K})^T ,\ldots, (\bm\xi_K^T,\bm{0}_{p-K})^T, (\bm{0}_K^T, \bm{e}_1^T )^T, \ldots, (\bm{0}_K^T, \bm{e}_{p-K}^T )^T ,
    \eea 
    where $\bm{e}_j\in \bbR^{p-K}$ denotes the standard basis vector.
    We let $\bar{\bm\xi}_k$ denote $(\bm\xi_k^T,\bm{0}_{p-K}^T)^T$, $k=1,\ldots, K$.
    
    Applying (XVI.5) and the last display on page 720 in \cite{messiah2014quantum}, 
    we have
    \bea 
    \lambda_k(\bm\Omega) &=& \lambda_k(\bm\Omega_{11}) + \sum_{n=1}^\infty \epsilon_n ,~ k=1,\ldots, K,\\
    \epsilon_n &=& \sum_{(n-1)} \bar{\bm\xi}_k^T  \bm{V} \bm{S}^{d_1} \bm{V}\bm{S}^{d_2}\bm{V}\ldots \bm{V} \bm{S}^{d_n}\bm{V}  \bar{\bm\xi}_k,
    \eea 
    where
$\sum_{(n-1)}$ extends over all sets of the non-negative integers $d_1,\ldots,d_n$ such that $\sum_{i=1}^n d_i = n$, and $\bm{S}^d$ is derived as
    \bea 
    \bm{S}^d  =\begin{cases}
      -\begin{pmatrix}
\bm\xi_k\bm\xi_k^T  & \bm{O}\\
\bm{O} & \bm{O}
      \end{pmatrix} & d=0,\\
        \begin{pmatrix}
            \sum_{l\le K, l\neq k} \frac{1}{(\lambda_l - \lambda_k)^d}\bm\xi_l\bm\xi_l^T  & \bm{O}\\
            \bm{O} &  \frac{1}{\lambda_k^d}\bm{I}_{p-K} 
        \end{pmatrix}  & d >0.
    \end{cases}
    \eea 
    from (XVI.65) in \cite{messiah2014quantum}.

We have 
\bea 
    \epsilon_1 &=&  \bar{\bm\xi}_k^T \bm{V} \bm{S}^1 \bm{V} \bar{\bm\xi}_k\\
    &=& \begin{pmatrix}
        \bm{0}^T_K & \bm\xi_k^T \bm\Omega_{12} 
    \end{pmatrix} 
    \begin{pmatrix}
            \sum_{l\le K, l\neq k} \frac{1}{(\lambda_l - \lambda_k)}\bm\xi_l\bm\xi_l^T  & \bm{O}\\
            \bm{O} &  \frac{1}{\lambda_k}\bm{I}_{p-K} 
        \end{pmatrix}
        \begin{pmatrix}
        \bm{0}_K \\
        \bm\Omega_{21} \bm\xi_k 
    \end{pmatrix}
        \\
    &=&\frac{||  \bm\Omega_{21}\bm\xi_k ||^2}{\lambda_k}.
    \eea 

    Next, we give the upper bound of $\epsilon_n$ when $n\ge 2$.
    We have 
    \bean\label{eq:epsnupper} 
\epsilon_n &\le&  \sum_{(n-1)}  ||\bm{S}^{d_1/2}\bm{V}\bar{\bm\xi}_k||_2 ||\bm{S}^{d_n/2}\bm{V}\bar{\bm\xi}_k||_2  \prod_{j=1}^{n-1} || \bm{S}^{d_j/2}\bm{V}\bm{S}^{d_{j+1}/2} ||_2.
    \eean

    When $d_j,d_{j+1}>0$, we have 
    \bea 
    \bm{S}^{d_j/2}\bm{V} \bm{S}^{d_{j+1}/2} &=& 
   \frac{1}{\lambda_k^{(d_j+d_{j+1})/2}}  \begin{pmatrix}
        \bm{O} & \bm{O}\\
        \bm{O} & \bm\Omega_{22}
    \end{pmatrix} \\
    &&+ \frac{1}{\lambda_k^{(d_j+d_{j+1})/2}} \begin{pmatrix}
        \bm{O} & \sum_{l\le K, l\neq k} \frac{\lambda_k^{d_j/2}}{(\lambda_l - \lambda_k)^{d_j/2}} (\bm\Omega_{21} \bm\xi_l \bm\xi_l^T)^T\\
        \sum_{l\le K, l\neq k} \frac{\lambda_k^{d_{j+1}/2}}{(\lambda_l - \lambda_k)^{d_{j+1}/2}}\bm\Omega_{21} \bm\xi_l \bm\xi_l^T & \bm{O}
    \end{pmatrix},
    \eea 
and 
    \bea 
    &&||\bm{S}^{d_j/2}\bm{V} \bm{S}^{d_{j+1}/2} ||\\
    &\le& \frac{2}{\lambda_k^{(d_j+d_{j+1})/2}} \max \Big[
    ||\bm\Omega_{22}||_2 , \Big( \sum_{l\le K, l\neq k} \Big|\frac{\lambda_k}{\lambda_l-\lambda_k} \Big|^{d_j/2} \vee \sum_{l\le K, l\neq k} \Big|\frac{\lambda_k}{\lambda_l-\lambda_k} \Big|^{d_{j+1}/2}  \Big)||\bm\Omega_{21}\bm\xi_l||_2
    \Big] \\
    &\le& \Big(\frac{2C}{\lambda_k}\Big)^{(d_j+d_{j+1})/2} \max \Big[
    ||\bm\Omega_{22}||_2 , \Big( \sum_{l\le K, l\neq k} \Big|\frac{\lambda_k}{C(\lambda_l-\lambda_k)} \Big|^{d_j/2}  \vee \sum_{l\le K, l\neq k} \Big|\frac{\lambda_k}{C(\lambda_l-\lambda_k)} \Big|^{d_{j+1}/2} \Big) ||\bm\Omega_{21}\bm\xi_l||_2
    \Big],
    \eea 
    for arbitrary $C>1$.
    When $d_j=d_{j+1}=0$, 
    \bea 
    \bm{S}^{d_j/2}\bm{V} \bm{S}^{d_{j+1}/2} &=& \bm{O}.
    \eea 
    When $d_j>0$ and $d_{j+1}=0$,
    \bea 
\bm{S}^{d_j/2}\bm{V} \bm{S}^{d_{j+1}/2} &=& \begin{pmatrix}
    \frac{1}{\lambda_k^{d_j/2}}\bm\Omega_{21}\bm\xi_k\bm\xi_k^T & \bm{O}\\
    \bm{O} & \bm{O}
\end{pmatrix},
    \eea 
    and 
    \bea 
||\bm{S}^{d_j/2}\bm{V} \bm{S}^{d_{j+1}/2}||_2 &\le& \frac{1}{\lambda_k^{d_j/2}} ||\bm\Omega_{21}\bm\xi_k||_2\\
&=&\frac{1}{\lambda_k^{(d_j+d_{j+1})/2}} ||\bm\Omega_{21}\bm\xi_k||_2.
    \eea 
    Likewise, when $d_j=0$ and $d_{j+1}>0$, 
        \bea 
||\bm{S}^{d_j/2}\bm{V} \bm{S}^{d_{j+1}/2}||_2 &\le& \frac{1}{\lambda_k^{(d_j+d_{j+1})/2}} ||\bm\Omega_{21}\bm\xi_k||_2.
    \eea  
    Since $||\bm\Omega_{22}||_2 \le x$ and 
$\Big( 1\vee \sum_{l\le K, l\neq k} \Big|\frac{\lambda_k}{C(\lambda_l-\lambda_k)} \Big|^{d_j/2} \vee \sum_{l\le K, l\neq k} \Big|\frac{\lambda_k}{C(\lambda_l-\lambda_k)} \Big|^{d_{j+1}/2} \Big) ||\bm\Omega_{21}\bm\xi_l||_2 \le x$ by the assumption, collecting all the cases, we obtain 
    \bea 
||\bm{S}^{d_j/2}\bm{V} \bm{S}^{d_{j+1}/2}||_2  \le \Big(\frac{2C}{\lambda_k}\Big)^{(d_j+d_{j+1})/2}  x, ~ d_j\ge 0, ~d_{j+1}\ge 0.
    \eea 
We have 
\bea 
    \bm{S}^{d/2}  \bm{V} \bar{\bm\xi}_k = \begin{pmatrix}
        \bm{O} & \bm{O}\\
         \frac{1}{\lambda_k^{d/2}}\bm\Omega_{21}\bm\xi_k & \bm{O}
    \end{pmatrix}
    \eea 
and 
    \bea 
    ||\bm{S}^{d/2}  \bm{V} \bar{\bm\xi}_k ||_2 \le \frac{1}{\lambda_k^{d/2}} ||\bm\Omega_{21}\bm\xi_k|| \le \Big(\frac{2C}{\lambda_k}\Big)^{d/2}  x
    \eea 
Thus, 
\bea 
    \eqref{eq:epsnupper}
    &\le& \sum_{(n-1)} \Big(\frac{2C}{\lambda_k}\Big)^{\sum_{i=1}^n d_i }   x^{n+1} \\
    &\le & \lambda_k \Big( \frac{2Cx}{\lambda_k}\Big)^{n+1} \sum_{(n-1)}1\\
    &\le &  \lambda_k     \Big( \frac{4eCx}{\lambda_k}\Big)^{n+1} 
    \eea  
    where the last inequality is satisfied since   \bea 
\sum_{(n-1)}1
&=& \binom{2n-1}{n} \\
&\le& (2e)^n,
\eea
and we obtain 
\bea 
\lambda_k(\bm\Sigma) &=& \lambda_k(\bm\Omega_{11}) \Big[ 1+ \frac{||  \bm\Omega_{21}\bm\xi_k ||^2}{\lambda_k(\bm\Omega_{11})^2} +R \Big],\\
R&\le&  \sum_{n=2}^\infty     \Big( \frac{4eCx}{\lambda_k(\bm\Omega_{11})}\Big)^{n+1}\\
&=& \Big( \frac{4eCx}{\lambda_k(\bm\Omega_{11})}\Big)^{3} \Big(1- \frac{4eCx}{\lambda_k(\bm\Omega_{11})}\Big)^{-1}.
\eea 
    \end{proof}

\section{Proof of Theorem \ref{thm:IWeigenbias}}\label{sec:pf3.2}
For the proof Theorem \ref{thm:IWeigenbias}, we give the following lemma.  

\begin{lemma}\label{lemma:nonspikeeigen}
Suppose that $\bm{X}_i,\ldots,\bm{X}_n$ are i.i.d. samples with zero mean vector and $\textup{Cov}(\bm{X}_i) = \bm\Sigma$ with Assumptions 1-4, which are given in the main manuscript. 
Let $\hat\lambda_{K+1}$ and $\lambda_{0,K+1}$ denote the $(K+1)$th eigenvalues of the sample covariance $\bm{S}_n = \frac{1}{n} \sum_{i=1}^n \bm{X}_i \bm{X}_i^T $ and the population covariance $\bm\Sigma_0$, respectively.
For any $\delta$, there exist $C>1$ and $n_0$ such that when $n\ge n_0$ 
\bea 
P( \hat\lambda_{K+1} >  C\lambda_{0,K+1}(1+\sqrt{p/n})^2  ) <\delta.
\eea 

\end{lemma}
\begin{proof}

From Theorem 2.5 of \cite{cai2020limiting}, for any \( \varepsilon > 0 \), we have with high probability:
\[
|\hat\lambda_{K+1} - \nu_1| \le n^{-2/3 + \varepsilon},
\]
where $\nu_1$ is a positive real number satisfying \( \nu_1 \le  \lambda_{0,K+1}|| \frac{1}{n}  \sum_{i=1}^n \bm{Z}_i \bm{Z}_i^T||_2 \) and $\bm{Z}_i = \bm\Sigma_0^{-1/2}\bm{X}_i$.
The classical result in random matrix theory (e.g., \cite{bai2010spectral}) yields
\[
\left\| \frac{1}{n}  \sum_{i=1}^n \bm{Z}_i \bm{Z}_i^T \right\| \xrightarrow{\text{a.s.}} (1 + \sqrt{p/n})^2.
\]
Thus, for any $\delta$, there exist $C_1>0$ and $n_0$ such that when $n\ge n_0$ 
\bea 
P( \hat\lambda_{K+1} >  (1+C_1)\lambda_{0,K+1}(1+\sqrt{p/n})^2 + n^{-2/3 + \varepsilon} ) <\delta.
\eea 
Since $n^{-2/3 + \varepsilon} $ is dominated by the other term, the proof is completed.

\end{proof}

\begin{lemma}\label{lemma:loweigenvaluevector}
Suppose $\bm\Omega \mid \bbX_n \sim IW_K( (n+\nu_n-2p-2)\hat{\bm\Lambda}, n+ \nu_n -2p+2K ) $, where $\hat{\bm\Lambda} = \textup{diag}(\hat\lambda_1,\ldots,\hat\lambda_K)$ with $\sup_{k=1,\ldots,K-1}\frac{\hat\lambda_k}{\hat\lambda_{k+1}} >C$ for some positive constant $C>1$.
If $K^3/n = o(1)$, then 
\bean 
\pi( \Big| \frac{\lambda_k(\bm\Omega_{11})}{\hat\lambda_k} -1 \Big| > \sqrt{\frac{K^3}{n}}  \mid \bbX_n)&\to& 0 \label{eq:loweigenvalue}\\
\pi( 1-\xi_{k,k}^2 > \frac{K}{n} \mid \bbX_n) &\to& 0.\label{eq:loweigenvector}
\eean 
    
\end{lemma}

\begin{proof}

    By Theorem \ref{thm:loweigen}, we have
    \bea 
    \frac{\lambda_k(\bm\Omega)}{\hat\lambda_k} -1  \le  K || \bm{\tilde\Omega}  -\bm{I}||_2,
    \eea 
where $\bm{\tilde\Omega} = \bm{\hat\Lambda}^{-1/2}\bm\Omega \bm{\hat\Lambda}^{-1/2} \sim IW_K ((n+\nu_n-2p-2) \bm I, n+\nu_n -2p +2K)$.
Since $|| \bm{\tilde\Omega} - \bm{I}  ||^2 = O_p(\frac{K}{n})$ by Theorem 1 in \cite{lee2018optimal}, \eqref{eq:loweigenvalue} is satisfied.

    By Lemma \ref{lemma:evector}, we have
    \bea
    \xi_{k,k}^2 = \frac{1}{ 1+ || \sqrt{\hat\lambda_k} \hat{\bm\Lambda}_{-k}^{-1/2} ( \tilde{\bm\Omega} - \lambda_k \hat{\bm\Lambda}_{-k}^{-1})^{-1} \bm{U}_{0,-k}^T \bm{\tilde\Omega} \bm{u}_{0,k}  ||^2},
    \eea 
    where $\lambda_k=\lambda_k(\bm\Omega)$, $\bm{u}_{0,k} = \bm{e}_k$ and $\bm{U}_{0,-k} = [\bm{e}_1,\ldots, \bm{e}_{k-1},\bm{e}_{k+1},\ldots, \bm{e}_K]$.
    Let $\bm{D} = (\bm{I} - \lambda_k \hat{\bm\Lambda}_{-k}^{-1})^{-1}$. We have
    \bea 
    ( \tilde{\bm\Omega} - \lambda_k \hat{\bm\Lambda}_{-k}^{-1})^{-1} &=&
        ( \tilde{\bm\Omega} - \bm I + \bm I - \lambda_k \hat{\bm\Lambda}_{-k}^{-1})^{-1} \\
        &=&
    \bm{D} ( \bm{I} + \bm{D} (\tilde{\bm\Omega} - \bm{I} ))^{-1},
    \eea 
    and
    \bea 
    1- \xi_{k,k}^2 &\le& || \sqrt{\hat\lambda_k} \hat{\bm\Lambda}_{-k}^{-1/2} ( \tilde{\bm\Omega} - \lambda_k \hat{\bm\Lambda}_{-k}^{-1})^{-1} \bm{U}_{0,-k}^T \bm{\tilde\Omega} \bm{u}_{0,k}  ||^2\\
    &\le& 2|| \sqrt{\lambda_k} \hat{\bm\Lambda}_{-k}^{-1/2} ( \tilde{\bm\Omega} - \lambda_k \hat{\bm\Lambda}_{-k}^{-1})^{-1} \bm{U}_{0,-k}^T \bm{\tilde\Omega} \bm{u}_{0,k}  ||^2\\
    &= & 2|| \sqrt{\lambda_k} \hat{\bm\Lambda}_{-k}^{-1/2} \bm{D}  ( \bm{I}+ \bm{D} ( \tilde{\bm\Omega} - \bm{I}))^{-1} \bm{U}_{0,-k}^T \bm{\tilde\Omega} \bm{u}_{0,k}  ||^2\\
    &\le& 2 || \sqrt{\lambda_k} \hat{\bm\Lambda}_{-k}^{-1/2}\bm{D}||^2  ||\bm{U}_{0,-k}^T \bm{\tilde\Omega} \bm{u}_{0,k}  ||^2 \Big(\frac{1}{1- || \bm{D} ( \tilde{\bm\Omega} - \bm{I}) ||} \Big)^2 \\
    &\le& 4 || \sqrt{\hat\lambda_k} \hat{\bm\Lambda}_{-k}^{-1/2}\bm{D}||^2  || \bm{\tilde\Omega} - \bm{I}  ||^2  \Big(\frac{1}{1- || \bm{D} || || \tilde{\bm\Omega} - \bm{I} ||} \Big)^2 ,
    \eea 
    where the second and last inequalities are satisfied since $\lambda_k/\hat\lambda_k $ converges to $1$.

    We have 
    \bea 
    || \sqrt{\lambda_k} \hat{\bm\Lambda}_{-k}^{-1/2}\bm{D}|| &\le& 
    2|| \sqrt{\hat\lambda_k} \hat{\bm\Lambda}_{-k}^{-1/2}\bm{D}|| \\
    &=&2|| \textup{diag} ( \frac{\sqrt{\hat\lambda_k \hat\lambda_1}}{\hat\lambda_1 -\hat\lambda_k },\ldots,  \frac{\sqrt{\hat\lambda_k \hat\lambda_{k-1}}}{\hat\lambda_{k-1} -\hat\lambda_k }, \frac{\sqrt{\hat\lambda_k \hat\lambda_{k+1}}}{\hat\lambda_{k+1} -\hat\lambda_k },\ldots, \frac{\sqrt{\hat\lambda_k \hat\lambda_K}}{\hat\lambda_K -\hat\lambda_k } )||,
    \eea 
    which is bounded above by a positive constant.
    Since 
    \bea 
    \bm{D} &=& \textup{diag} ( \frac{\hat\lambda_1}{\hat\lambda_1-\lambda_k},\ldots, \frac{\hat\lambda_{k-1}}{\hat\lambda_{k-1}-\lambda_k}, \frac{\hat\lambda_{k+1}}{\hat\lambda_{k+1}-\lambda_k},\ldots,  \frac{\hat\lambda_K}{\hat\lambda_K-\lambda_k}  ),
    \eea 
    $||\bm{D}||$ is also bounded above by a positive constant.
    Thus, $1-\xi_{k,k}^2 \le  C_1 || \bm{\tilde\Omega} - \bm{I}  ||^2 \frac{1}{1- C_1|| \bm{\tilde\Omega} - \bm{I}  ||^2}$ for some positive constant $C_1$. 
    Since $|| \bm{\tilde\Omega} - \bm{I}  ||^2 = O_p(\frac{K}{n})$ by Theorem 1 in \cite{lee2018optimal}, \eqref{eq:loweigenvector} is satisfied.

\end{proof}

\begin{proof}[Proof of Theorem \ref{thm:IWeigenbias}]

Let $\hat{\bm\Sigma} = \frac{n\bm{S}_n + \bm{A}_n}{n+\nu_n -2p-2}$, and let $\hat{\bm\Gamma} = (\hat{\bm\Gamma}_1,\hat{\bm\Gamma}_2)\in \bbR^{p\times p}$ denote the eigenvector matrices of $\hat{\bm\Sigma}$ with $\hat{\bm\Gamma}_1\in \bbR^{p\times K}$ and $\hat{\bm\Gamma}_2\in \bbR^{p\times (p-K)}$. 
Let
\bea 
\hat{\bm\Gamma}\bm\Sigma \hat{\bm\Gamma}^T  = \bm\Omega = \begin{pmatrix}
    \bm\Omega_{11} & \bm\Omega_{12}\\
    \bm\Omega_{21} & \bm\Omega_{22}
\end{pmatrix} =
\begin{pmatrix}
    \hat{\bm\Gamma}_1^T \bm\Sigma \hat{\bm\Gamma}_1 & \hat{\bm\Gamma}_1^T \bm\Sigma \hat{\bm\Gamma}_2\\
    \hat{\bm\Gamma}_2^T \bm\Sigma \hat{\bm\Gamma}_1 & \hat{\bm\Gamma}_2^T \bm\Sigma \hat{\bm\Gamma}_2
\end{pmatrix}.
\eea 
Let $\bm\xi_k$ denote the $k$th eigenvector of $\bm\Omega_{11}$.
By applying Theorem \ref{thm:eigenbias}, 
\bea 
\lambda_k(\bm\Sigma) &=& \lambda_k(\bm\Omega_{11})  \Big[ 1 + \frac{||\bm\Omega_{21}\bm\xi_k||^2}{\lambda_k(\bm\Omega_{11})^2}  + R\Big]\\
&=&\lambda_k(\bm\Omega_{11})  \Big[ 1 + \frac{||[\bm\Omega_{21}]_k||^2}{\lambda_k(\bm\Omega_{11})^2} +\tilde{R}\Big],
\eea 
with $R \le \left( \frac{4eCx }{\lambda_k(\bm{\Omega}_{11})} \right)^3 \left(1 - \frac{4eCx}{\lambda_k(\bm{\Omega}_{11})} \right)^{-1}$,
$||\bm\Omega_{22}||_2 \le x$ and \\
$\Big( 1\vee \sum_{l\le K, l\neq k} \Big|\frac{\lambda_k(\bm{\Omega}_{11})}{C(\lambda_l(\bm{\Omega}_{11})-\lambda_k(\bm{\Omega}_{11}))} \Big|^{d/2}  \Big) ||\bm\Omega_{21}\bm\xi_l||_2 \le x$.
Here, $\tilde{R}= \frac{||\bm\Omega_{21}\bm\xi_k||^2-||[\bm\Omega_{21}]_k||^2}{\lambda_k(\bm\Omega_{11})^2}   + R$ and the constant $C$ is a sufficiently large constant to be specified in this proof.

Let $\pi(\cdot \mid \bbX_n)$ denote the posterior distribution of $\bm\Sigma\mid \bbX_n \sim IW_p((n+\nu_n -2p-2)\hat{\bm\Sigma}, n+\nu_n)$ and let $E(\cdot)$ denote the expectation of random observations $\bbX_n$.
We show that there exists a positive constant $C_1>0$ such that the followings converge to $0$
\bean 
E\Big[\pi (||\bm\Omega_{21} \bm\xi_k||_2 >  \Big(  C_1\frac{p\lambda_{0,k}\log n}{n}  \Big)^{1/2} \mid \bbX_n) \Big] , \label{eq:corrcond1}\\
%E\Big[\pi (\sum \Big| \frac{\lambda_k}{C(\lambda_l-\lambda_k)} \Big|^{d/2} ||\bm\Omega_{21} \bm\xi_k||_2 >  \Big(  C_1K^2\frac{p\lambda_{0,k}\log (n\wedge p)}{n}  \Big)^{1/2} \mid \bbX_n) \Big] ,\label{eq:corrcond2}\\
E\Big[\pi (||\bm\Omega_{22}||_2 > C_1\frac{p}{n} \mid \bbX_n) \Big] ,\label{eq:corrcond3} \\
E\Big[ \pi (\Big|\frac{\lambda_k(\bm{\Omega}_{11})}{C(\lambda_l(\bm{\Omega}_{11})-\lambda_k(\bm{\Omega}_{11}))} \Big| >  1\mid \bbX_n ) \Big] \label{eq:corrcond2}\\
 E \Big[ \pi\Big( || \Big( \frac{||\bm\Omega_{21}\bm\xi_k||^2-||[\bm\Omega_{21}]_k||^2}{\lambda_k(\bm\Omega_{11})^2} ||  >  a_n \frac{p}{n\lambda_{0,k}}\Big)  \Big]\label{eq:corrcond4} ,
\eean 
for some positive sequence $a_n\to 0$.

By Lemma \ref{lemma:loweigenvaluevector}, $\lambda_k(\bm\Omega_{11})$ converges to $\lambda_k(\bm{\hat\Sigma})$. Since $n + \nu_n -2p-2 = O(n)$, $\lambda_k(\bm{\hat\Sigma})\lesssim \lambda_{0,k}$ and $\lambda_k(\bm{\hat\Sigma})\gtrsim \lambda_{0,k}$. Thus $\lambda_k(\bm\Omega_{11})$ is asymptotically equivalent to $\lambda_{0,k}$ with high probability.

By the convergence of \eqref{eq:corrcond1}-\eqref{eq:corrcond2}, we can set $x\asymp \sqrt{\frac{p\lambda_{0,k}\log n}{n}} \vee \frac{p}{n}$, which makes $R\lesssim \Big( \frac{p \log n}{n\lambda_{0,k}} \Big)^{3/2}$
Combining the convergence of \eqref{eq:corrcond4}, we obtain 
\bea 
E[\pi( \tilde{R}  > C_2\Big\{ a_n + (\log n)^{3/2}\Big(\frac{p}{n\lambda_{0,k}} \Big)^{1/2} \Big\} \frac{p}{n\lambda_{0,k}}   ) ] \to 0,
\eea 
for some positive constant $C_2$, which means $ \tilde{R}  = o_p(\frac{p}{n\lambda_{0,k}} )$ by the assumption $\frac{p}{n\lambda_{0,k}}(\log n)^3 = o(1)$.

Now, we show that \eqref{eq:corrcond1} converges to $0$.
Let $\hat{\bm\Lambda}_1 = \textup{diag}(\hat\lambda_1,\ldots,\hat\lambda_K)$ and $\hat{\bm\Lambda}_2 = \textup{diag}(\hat\lambda_{K+1},\ldots,\hat\lambda_p)$, where $\hat\lambda_j$ is the $j$th eigenvalue of $\hat{\bm\Sigma}$.
Define
\bea \bm{\tilde{\Omega}} &=& 
\begin{pmatrix}
    \tilde{\bm\Omega}_{11} & \tilde{\bm\Omega}_{12}\\
    \tilde{\bm\Omega}_{21} & \tilde{\bm\Omega}_{22}
\end{pmatrix}
\\
&=&  \begin{pmatrix}
    \hat{\bm\Lambda}_1^{-1/2} & \bm{O}\\
    \bm{O} & \bm{I}
\end{pmatrix}\bm\Omega \begin{pmatrix}
    \hat{\bm\Lambda}_1^{-1/2} & \bm{O}\\
    \bm{O} & \bm{I} 
\end{pmatrix} \\
&\sim& IW_p\Big[  (n+\nu_n-2p-2) \begin{pmatrix}
    \bm{I} & \bm{O}\\
    \bm{O} & \hat{\bm\Lambda}_2
\end{pmatrix} ,n+\nu_n\Big],
\eea 
and 
\bea 
 \bm{\breve{\Omega}} &=& 
\begin{pmatrix}
    \breve{\bm\Omega}_{11} & \breve{\bm\Omega}_{12}\\
    \breve{\bm\Omega}_{21} & \breve{\bm\Omega}_{22}
\end{pmatrix}
\\
&=&  \begin{pmatrix}
    \hat{\bm\Lambda}_1^{-1/2} & \bm{O}\\
    \bm{O} & \hat{\bm\Lambda}_2^{-1/2}
\end{pmatrix}\bm\Omega \begin{pmatrix}
    \hat{\bm\Lambda}_1^{-1/2} & \bm{O}\\
    \bm{O} & \hat{\bm\Lambda}_2^{-1/2}
\end{pmatrix} \\
&\sim& IW_p(  (n+\nu_n-2p-2)
    \bm{I}_p  ,n+\nu_n). 
\eea 
We have 
\bea 
||\bm\Omega_{21}  \bm\xi_k||_2^2 &=&
||\bm\Omega_{21} \hat{\bm\Lambda}_1^{-1/2}\hat{\bm\Lambda}_1^{1/2} \bm\xi_k||_2^2 \\
&=& ||\tilde{\bm\Omega}_{21} \hat{\bm\Lambda}_1^{1/2} \bm\xi_k||_2^2\\
&=& \Big| \Big|\sum_{j=1}^K \sqrt{\hat\lambda_{j}} \xi_{k,j}  [\tilde{\bm\Omega}_{21}]_j\Big| \Big|^2\\
&\le&  (\sum_{j=1}^K \sqrt{\hat\lambda_{j}}  |\xi_{k,j}|~  ||[\tilde{\bm\Omega}_{21}]_j||_2)^2
\eea 
where $[\tilde{\bm\Omega}_{21}]_j$ is the $j$th column vector of $\tilde{\bm\Omega}_{21}$.

By Lemma 7 in \cite{yata2012effective}, there exists $C_3$ such that 
\bea 
 P( \sum_{K+1\le i\le p} \hat\lambda_i> C_3p)  \lra 0,
\eea 
and we have  
    \bea 
&&E[\pi( || [\tilde{\bm\Omega}_{21}]_j||_2^2 > C_1\frac{p \log n}{n} \mid \bbX_n ) ]\\
&=& 
E[\pi( \sum_{K+1\le i\le p} \tilde{\omega}_{ij}^2 > C_1\frac{p \log n}{n} \mid\bbX_n ) ]\\
&\le& E[\pi( \sum_{K+1\le i\le n} \tilde{\omega}_{ij}^2 > \frac{C_1}{2}\frac{p \log n}{n} \mid\bbX_n ) ] +E[\pi( \sum_{n+1\le i\le p} \tilde{\omega}_{ij}^2 > \frac{C_1}{2}\frac{p \log n}{n} \mid\bbX_n ) ]\\
&\le& P( \sum_{K+1\le i\le p} \hat\lambda_i  > C_3 p ) + E[\pi( \sum_{n+1\le i\le p} \tilde{\omega}_{ij}^2 > \frac{C_1}{2}\frac{p \log n}{n} \mid\bbX_n ) ]\\ 
&&+ E[\pi( \sum_{K+1\le i\le n} \tilde{\omega}_{ij}^2 >   \frac{C_1}{2C_3} \log n \frac{\sum_{K+1\le i\le n} \hat\lambda_i }{n} \mid\bbX_n ) I(\sum_{K+1\le i\le p} \hat\lambda_i  \le C_3 p) ] \\
&\le& P( \sum_{K+1\le i\le p} \hat\lambda_i  > C_3 p ) + E[\pi( \sum_{n+1\le i\le p} \tilde{\omega}_{ij}^2 > \frac{C_1}{2}\frac{p \log n}{n} \mid\bbX_n ) ]\\ 
&&+  \sum_{K+1\le i\le n} E[\pi( \tilde{\omega}_{ij}^2/\hat\lambda_i>   \frac{C_1}{2C_3}  \frac{\log n }{n} \mid\bbX_n ) ] \\
&\le& P( \sum_{K+1\le i\le p} \hat\lambda_i  > C_3 p ) + \sum_{n+1\le i\le p}  E[\pi( \tilde{\omega}_{ij}^2 > \frac{C_1}{2}\frac{ \log n}{n} \mid\bbX_n ) ]\\ 
&&+  \sum_{K+1\le i\le n} E[\pi( \breve{\omega}_{ij}^2>   \frac{C_1}{2C_3}  \frac{\log n }{n} \mid\bbX_n ) ] 
\eea 
    where $\tilde\omega_{ij}$ and $\breve\omega_{ij}$ denote the $(i,j)$ elements of $\bm{\tilde\Omega}$ and $\bm{\breve\Omega}$, respectively.
By Lemma S1.4 in \cite{lee2023postecon}, there exists a positive constant $C_4$ such that
\bea 
\sum_{K+1\le i\le n } E[ \pi ( \breve\omega_{ij}^2 > \frac{C_1}{2C_3}  \frac{\log n}{n} \mid\bbX_n)]&\le& C_4 n [\exp(-C_4 n) +  \exp( -C_4 \log n)   ],\\
\sum_{n+1 \le  i\le p} E[ \pi ( \tilde\omega_{ij}^2 > \frac{C_1}{2} \frac{ \log n}{n } \mid\bbX_n)] &\le& C_4 p [\exp(-C_4 n) + \exp( -C_4 \frac{n \log n}{||\bm{A}||_2})],
\eea 
for all sufficiently large $n$, where the second inequality is satisfied because $||\hat{\bm\Lambda}_2||\le ||\bm{A}_n||/n$.
When $C_4$ is a sufficiently large positive constant and $||\bm{A}||_2$ is bounded, we obtain 
\bean\label{eq:tildeOmega21} 
E[\pi( || [\tilde{\bm\Omega}_{21}]_j||_2^2 > C_1\frac{p \log n}{n} \mid\bbX_n ) ] \to 0
\eean 
 We can make $C_4$ sufficiently large by setting $C_1$ large enough.
Thus, $E[\pi( || [\tilde{\bm\Omega}_{21}]_j||_2^2 > C_1\frac{p \log n}{n} \mid\bbX_n ) ] $ converges to $0$,
and we obtain 
\bea 
\eqref{eq:corrcond1}
&\le& E [ \pi \{  (\sum_{j=1}^K \sqrt{\hat\lambda_{j}}  |\xi_{k,j}|~  ||[\tilde{\bm\Omega}_{21}]_j||_2)^2  > \lambda_{0,k} \mid\bbX_n\} )]   + o(1) \\
&\le& E [ \pi \{  K \hat\lambda_{1} (1-\xi_{k,k}^2)  > \lambda_{0,k} \mid\bbX_n\} )]   + o(1) \\
&\le& E [ \pi \{   \frac{\lambda_{0,1} }{\lambda_{0,k}}\frac{ K}{n}   > C_5 \mid\bbX_n\} )]   + o(1),
\eea 
for some positive constant $C_5$, where the last inequality is satisfied by Lemma \ref{lemma:loweigenvaluevector}.
The upper bound of \eqref{eq:corrcond1} converges to $0$ since $\frac{\lambda_{0,1}}{\lambda_{0,K}} \frac{ K}{n} =o(1)$ by the assumption.

Next, we show that \eqref{eq:corrcond4} converges to $0$. 
We have 
\bea 
&&\Big| ||\bm\Omega_{21}\bm\xi_k||^2 - ||[\bm\Omega_{21}]_k||^2 \Big| \\
&=&
\Big| || \sum_{j=1}^K \xi_{k,j}[\bm\Omega_{21}]_j||^2 - ||[\bm\Omega_{21}]_k||^2 \Big| \\
&= & (1-\xi_{k,k}^2 )  ||[\bm\Omega_{21}]_k||^2 + ||  \sum_{j\neq k} \xi_{k,j}[\bm\Omega_{21}]_j ||^2 + 2 ( \sum_{j\neq k} \xi_{k,j}[\bm\Omega_{21}]_j)^T (\xi_{k,k}[\bm\Omega_{21}]_k) \\
&\le & (1-\xi_{k,k}^2 )  \hat\lambda_k||[\tilde{\bm\Omega}_{21}]_k||^2 + 
K(1-\xi_{k,k}^2 ) \hat\lambda_1 \sup_{j\neq k}||[\tilde{\bm\Omega}_{21}]_j ||^2
 + 2 \sqrt{K} (1-\xi_{k,k}^2)^{1/2} \sqrt{\hat\lambda_1}\sqrt{\hat\lambda_k}\sup_{j}||[\tilde{\bm\Omega}_{21}]_j ||^2 \\
 &\le& (K+1)(1-\xi_{k,k}^2 ) \hat\lambda_1 \sup_{j}||[\tilde{\bm\Omega}_{21}]_j ||^2
 + 2 \sqrt{K} (1-\xi_{k,k}^2)^{1/2} \sqrt{\hat\lambda_1\hat\lambda_k}\sup_{j}||[\tilde{\bm\Omega}_{21}]_j ||^2,
\eea
and
\bea 
\eqref{eq:corrcond4} &\le&  E \Big[ \pi\Big(  \frac{(K+1)(1-\xi_{k,k}^2 ) \hat\lambda_1 
 + 2 \sqrt{K} (1-\xi_{k,k}^2)^{1/2} \sqrt{\hat\lambda_1\hat\lambda_k}}{\lambda_k(\bm\Omega_{11})^2}\sup_{j}||[\tilde{\bm\Omega}_{21}]_j ||^2 >  a_n \frac{p}{n\lambda_{0,k}}\Big)  \Big]\\
&\le& E \Big[ \pi\Big(  \frac{(K+1)(1-\xi_{k,k}^2 ) \hat\lambda_1 
 + 2 \sqrt{K} (1-\xi_{k,k}^2)^{1/2} \sqrt{\hat\lambda_1\hat\lambda_k}}{\lambda_k(\bm\Omega_{11})^2} C_1\log n >  a_n \frac{1}{\lambda_{0,k}}\Big)  \Big] + o(1)\\
 &\le& E \Big[ \pi\Big( 
 \Big\{ \frac{K}{n} \frac{\lambda_{0,1}}{\lambda_{0,k}} + 
 (\frac{K}{n}\frac{\lambda_{0,1}}{\lambda_{0,k}} )^{1/2} \Big\} C_6\log n >  a_n \Big)  \Big] + o(1),
\eea 
for some positive constant $C_6$, where the second inequality is satisfied by \eqref{eq:tildeOmega21}, and the third inequality is satisfied by Theorems \ref{corr:IWeigenvalue} and \ref{corr:IWeigenvector}.
The upper bound of \eqref{eq:corrcond4} converges to $0$ by setting $a_n^2 =  (\frac{K}{n}\frac{\lambda_{0,1}}{\lambda_{0,k}})^{1/2}\log n  $ that converges to $0$ by assumption.

We give the upper bound of \eqref{eq:corrcond3}. 
Note that $\bm\Omega_{22} \sim IW_{p-K} ( (n+\nu-2p-2) \hat{\bm\Lambda}_2 , n+\nu-2K)$.
Let 
\bea 
\bm\Omega_{22} =\begin{pmatrix}
    \bm\Omega_{22,(1)} & * \\
    * & \bm\Omega_{22,(2)} 
\end{pmatrix},~ \hat{\bm\Lambda}_2 = \begin{pmatrix}
    \hat{\bm\Lambda}_{2,(1)} & \bm{O}\\
    \bm{O} & \hat{\bm\Lambda}_{2,(2)}
\end{pmatrix}, 
\eea 
with $\bm\Omega_{22,(1)},\hat{\bm\Lambda}_{2,(1)} \in \calC_{n-K}$ and $\bm\Omega_{22,(2)} ,\hat{\bm\Lambda}_{2,(2)}\in \calC_{p-n}$.
By Lemma \ref{lemma:nonspikeeigen}, there exists a positive constant $C_7$ such that 
\bea 
P\left(||\hat{\bm\Lambda}_2 || >  C_7\Big(\frac{p}{n}\vee 1\Big) \right)
\eea 
converges to $0$.
We have
    \bea
    \eqref{eq:corrcond3}
    &\le& P\left(||\hat{\bm\Lambda}_2 || >  C_7\Big(\frac{p}{n}\vee 1\Big) \right) \\
    &&+E[ \pi(  || \bm\Omega_{22,(1)}||_2 >C_1(\frac{p}{n}\vee 1)/4 \mid\bbX_n ) I (  ||\hat{\bm\Lambda}_2 || \le   C_7\Big(\frac{p}{n}\vee 1\Big) ) ] \\
    &&+ E[ \pi(  || \bm\Omega_{22,(2)}||_2 >C_1(\frac{p}{n}\vee 1)/4 \mid\bbX_n ) ],
    \eea
and 
    \bea 
    \bm\Omega_{22,(1)}\mid\bbX_n\sim IW_{n-K} \left( (n+\nu_n -2p-2) \bm{\hat\Lambda}_{2,(1)} , n+\nu_n - 2p+2(n-K) \right).
    \eea 
    Let $\bm\Omega_1$ be a random matrix with $\bm\Omega_1 \sim W_{n-K}(n+\nu_n-2p+(n-K)-1,(n+\nu_n-2p+(n-K)-1)^{-1}\bm{I}_{n-K})$. 
    Then, 
    \bea 
      \bm\Omega_{22,(1)}  \equiv 
    \frac{n+\nu_n -2p-2}{n+\nu_n-2p+(n-K)-1}\bm{\hat\Lambda}_{2,(1)} ^{1/2}\bm\Omega_1^{-1}\bm{\hat\Lambda}_{2,(1)} ^{1/2}.
    \eea 
    When $\bm{\hat\Lambda}_{2} \le C_7\Big(\frac{p}{n}\vee 1\Big) $, 
        \bea 
    &&\pi\left(||\bm\Omega_{22,(1)}||_2 >  C_1\left(\frac{p}{n}\vee 1 \right)/4\Bigm|\bbX_n \right)  \\
    &\le& 
    \pi\left( C_7\left(\frac{p}{n}\vee 1  \right) \frac{n+\nu_n -2p-2}{n+\nu_n-2p+(n-K)-1}||\bm\Omega_1^{-1} ||_2 > C_1\left(\frac{p}{n}\vee 1 \right)/4 \Bigm|\bbX_n \right) \\
    &\le& \pi\left( ||\bm\Omega_1^{-1} ||_2 >C_1/(4C_7) \mid\bbX_n \right) \\
    &\le& \pi \left(\lambda_{\min} (\bm\Omega_1) < 4C_7/C_1 \mid\bbX_n \right) \\
    &\le& 2\exp( -C_8n ) ,
    \eea 
    for all sufficiently large $n$, where $C_8$ is a positive constant and the last inequality is satisfied by Lemma B.7 in \cite{lee2018optimal}.

Since $\bm\Omega_{22,(2)}\mid\bbX_n\sim IW_{p-n} \left( (n+\nu_n -2p-2) \bm{\hat\Lambda}_{2,(2)} , n+\nu_n - 2n \right)$, we have 
\bea 
    \frac{n+\nu_n-2p-2}{(\nu_n- p-1)}\hat{\bm\Lambda}_{2,(2)}^{1/2}\bm\Omega_2^{-1} \hat{\bm\Lambda}_{2,(2)}^{1/2} \equiv \bm\Omega_{22,(2)}.
    \eea  with 
    \bea 
    \bm\Omega_2 \sim W_{p-n}\left( (\nu_n-p-1)^{-1} \bm{I}_{p- n},\nu_n-p -1  \right).
    \eea 
    
    Thus,
    \bea 
    &&\pi\left(  ||\bm\Omega_{22,(2)}||_2 >C_1\left(\frac{p}{n}\vee 1 \right)/4 \Bigm|\bbX_n\right) \\
    &\le &\pi\left( ||\bm\Omega_2^{-1} ||_2 > C_1\left(\frac{p}{n}\vee 1 \right)\frac{\nu_n - p-1}{ (n+\nu_n-2p-2) ||\hat{\bm\Lambda}_{2,(2)}||_2 }\Bigm|\bbX_n \right) \\
    &\le &\pi \left(\lambda_{\min} (\bm\Omega_2) <C_1^{-1} \left(\frac{p}{n}\vee 1 \right)^{-1}\frac{ (n+\nu_n-2p-2)||\bm{A}_n||_2 }{\nu_n - p-1}\Bigm|\bbX_n \right).
    \eea 
Since $C_1^{-1} \left(\frac{p}{n}\vee 1 \right)^{-1}\frac{ (n+\nu_n-2p-2)||\bm{A}_n||_2 }{\nu_n - p-1}=o(1)$, by Lemma B.7 in \cite{lee2018optimal}, we have
    \bea 
    && \pi \left(\lambda_{\min} (\bm\Omega_2) <C_1^{-1} \left(\frac{p}{n}\vee 1 \right)^{-1}\frac{ (n+\nu_n-2p-2)||\bm{A}_n||_2 }{\nu_n - p-1}\Bigm|\bbX_n \right)\\
    &\le& 2\exp( - (\nu_n- p-1) ( 1- \sqrt{(p-n)/(\nu_n- p-1)})^2/8),
    \eea 
    which converges to $0$.
    Collecting the inequalities, we obtain $\eqref{eq:corrcond3}\lra 0$.

We also have $\eqref{eq:corrcond2} \to 0$ by applying Lemma \ref{lemma:loweigenvaluevector} with sufficiently large $C$.

Finally, we have 
\bea 
E^\pi(||[\bm\Omega_{21}]_k||^2 \mid\bbX_n) &=& \sum_{j=K+1}^p \textup{E}(\omega_{jk}^2 \mid\bbX_n)\\
&=& \sum_{j=K+1}^p \textup{Var}(\omega_{jk} \mid\bbX_n) \\
&=&\frac{(n - \nu - 2p - 2)\hat{\lambda}_k \sum_{l=K+1}^p \hat{\lambda}_l}{(n + \nu - 2p-1)(n + \nu - 2p - 4)}.
\eea

\end{proof}

\section{Proof of Theorem \ref{thm:BIC}}\label{sec:pf4.1}
We give the proof of Theorem \ref{thm:BIC}.

\begin{proof}[Proof of Theorem \ref{thm:BIC}]

Without loss of generality, we consider the flat non-spiked eigenvalue as $1$, i.e., $\lambda_{0,K_0+1}=\ldots= \lambda_{0,p}=1$.
First, we consider the case $K< K_0$.
We have 
{\small
    \bea 
    \frac{BIC_K - BIC_{K_0}}{n}  &=& - \sum_{k=K+1}^{K_0} \log \hat\lambda_k 
     + (p-K) \log ( w\hat{c}_{K_0} + \sum_{k=K+1}^{K_0}\hat\lambda_k /(p-K)  ) \\
     &&- (p-K_0) \log (\hat{c}_{K_0})
    +\frac{\{(K-K_0)(p+1) -K(K+1)/2 + K_0(K_0+1)/2 \}\log n }{n},
    \eea 
    }
    $w = (p-K_0)/(p-K)$.
Since
\bea 
\log ( w\hat{c}_{K_0} + \sum_{k=K+1}^{K_0}\hat\lambda_k /(p-K)  )  &=&  
\log  (w\hat{c}_{K_0}) + \log ( 1 + \frac{\sum_{k=K+1}^{K_0}\hat\lambda_k} {(p-K) w\hat{c}_{K_0}}  ) 
\\
&=&\log (\hat{c}_{K_0} ) + \log \Big(1 + \frac{K-K_0}{p-K} \Big)\\
&&+  \log \Big( 1+ \sum_{k=K+1}^{K_0}\frac{\hat\lambda_k}{(p-K)w\hat{c}_{K_0}} \Big),
\eea 
we obtain 
\bea
&&\frac{BIC_K - BIC_{K_0}}{n}  \nonumber\\
&=& 
- \sum_{k=K+1}^{K_0} \log \hat\lambda_k 
+ (p-K) \log \Big( 1+ \frac{K-K_0}{p-K} \Big)  \nonumber\\
&&+ (p-K) \log \Big( 1+ \sum_{k=K+1}^{K_0}\frac{\hat\lambda_k}{(p-K)w\hat{c}_{K_0}}  \Big)  + (K_0 -K) \log (\hat{c}_{K_0})  \nonumber \\
&&+\frac{\{(K-K_0)(p+1) -K(K+1)/2 + K_0(K_0+1)/2 \}\log n }{n}  \nonumber\\
&\ge& - \sum_{k=K+1}^{K_0} \log \hat\lambda_k + \frac{\sum_{k=K+1}^{K_0} \hat\lambda_k}{2w\hat{c}_{K_0}} - (K_0 -K) (2-\log (\hat{c}_{K_0}))
-\frac{(K_0-K) (p+1) \log n}{n} \nonumber\\
&\ge&  C_1 \sum_{k=K+1}^{K_0}\hat\lambda_k  ( 1  - C_2 \frac{p\log n}{n\hat\lambda_k } ) +\textup{const.} \nonumber 
\eea 
for some positive constants $C_1$ and $C_2$ and all sufficiently large $n$ and $p$, where the first inequality is satisfied since $\log (1+x) \ge x/2$ and $\log (1-x)\ge -2x$ when $x\in (0,1/2)$, and the second inequality is satisfied since $\hat{c}_{K_0} - \bar{c}_{K_0} = \mathcal{O}_P\left( \frac{1}{\sqrt{n}} \right)$ by Lemma 7 of \cite{yata2012effective}, where $\bar{c}_{K_0} = \sum_{k=K_0+1}^p \lambda_{0,k}/(p-K_0)$.
Thus, $\frac{BIC_K - BIC_{K_0}}{n}  \gtrsim  \sum_{k=K+1}^{K_0}\hat\lambda_k$ when $\frac{p\log n }{n\lambda_{0,K_0}} =o(1)$.

We suppose the case when $K> K_0 $.
We have 
\bean 
&&    \frac{BIC_K - BIC_{K_0}}{n} \nonumber\\ &=&  \sum_{k=K_0+1}^{K} \log \hat\lambda_k 
     + (p-K) \log ( w\hat{c}_{K_0} - \sum_{k=K_0+1}^{K}\hat\lambda_k /(p-K)  ) \nonumber\\
     &&- (p-K_0) \log (\hat{c}_{K_0})\nonumber\\
     &&+\frac{\{(K-K_0)(p+1) -K(K+1)/2 + K_0(K_0+1)/2 \}\log n }{n}\nonumber\\
    &=& \sum_{k=K_0+1}^{K} \log \hat\lambda_k  + (K-K_0) (\frac{(p+1)\log n}{n}- \log (\hat{c}_{K_0}) ) \label{eq:BICdiff1}\\
    &&+(p-K)\log \Big( 1+ \frac{K-K_0}{p-K} \Big)+ (p-K)\log [1  -\sum_{k=K_0+1}^{K}\hat\lambda_k /\{ \hat{c}_{K_0}(p-K_0)\}] \label{eq:BICdiff2}\\
    &&+\frac{\{ -K(K+1)/2 + K_0(K_0+1)/2 \}\log n }{n}\label{eq:BICdiff3},
    \eean 
    where the second equality is satisfied since
\bea 
&&\log ( w\hat{c}_{K_0} - \sum_{k=K_0+1}^{K}\hat\lambda_k /(p-K)  ) \\&=&
      \log (\hat{c}_{K_0}) +\log \Big( 1+ \frac{K-K_0}{p-K} \Big)+\log [1  -\sum_{k=K_0+1}^{K}\hat\lambda_k /\{ \hat{c}_{K_0}(p-K_0)\}] .
\eea

Lemma \ref{lemma:nonspikeeigen} gives, for $k> K_0$
\bea 
\hat\lambda_k \le \hat\lambda_{K_0+1} \le C_3 p/n ,
\eea
with high probability, for some positive constant $C_3$, which gives
\bea  
\sum_{k=K_0+1}^{K}\hat\lambda_k /\{ \hat{c}_{K_0}(p-K_0)\} \le \frac{ C_2 (K-K_0) p}{\hat{c} n(p-K_0)}\to 0 .
\eea 
Thus, 
\bea 
\eqref{eq:BICdiff2}  &\ge& -2 \frac{p-K}{\hat{c}_{K_0}(p-K_0)} \sum_{k=K_0+1}^K \hat\lambda_k  \ge -2C_2 \frac{p-K}{\hat{c}_{K_0} (K-K_0)(p-K_0)} \frac{p}{n} \gtrsim \frac{p(K-K_0)}{n}
\eea 
with high probability.

We have 
\bea 
\hat\lambda_k &\ge& \frac{\hat{c}_{K_0} (p-K_0) - \hat\lambda_{K_0+1} (K-K_0)}{p-K}\\
&\ge& \frac{\hat{c}_{K_0} (p-K_0) - C_3p(K-K_0)/n}{p-K} \\
&\stackrel{p}{\to }& \textup{const.},
\eea 
when $(K-K_0)/n\to 0$, because $\hat{c}_{K_0} \stackrel{p}{\to} \bar{c}_{K_0}$ (Lemma 7 in \cite{yata2012effective}). 
So, we have a lower bound of $\log \hat\lambda_k$ that converges to a constant.
Thus, we have
\bea 
\eqref{eq:BICdiff1} \gtrsim (K-K_0) \frac{p\log n}{n},
\eea 
which dominates the lower bounds of \eqref{eq:BICdiff2} and \eqref{eq:BICdiff3}.

     \end{proof}
\section{Additional Simulation Results}\label{sec:addsims}
\subsection{Estimation of eigenvalues}

We present the results of eigenvalue estimation for the second setting, which follows the design of \citet{wang2017asymptotics} with a slight modification. Specifically, the mean of the Gamma distribution characterizing the idiosyncratic errors is increased. The idiosyncratic covariance matrix is diagonal, \(\bm{\Sigma}_u = \mathrm{diag}(\sigma_1^2, \ldots, \sigma_p^2)\), where each \(\sigma_i\) is independently drawn from a \(\mathrm{Gamma}(a, b)\) distribution with \(a = 150\) and \(b = 100\). Increasing the idiosyncratic error makes it more challenging to estimate the leading eigenvalues and more likely to observe eigenvalue inflation in high-dimensional settings. 
\autoref{tab:eigval2} shows that the proposed IW-PHC and IW-PC estimators deliver stable performance, with low relative errors and high CP values across most settings, even when $n$ is small. In contrast, several methods, particularly IW and SIW, suffer from substantial error inflation and severe coverage deterioration for smaller eigenvalues, with IW exhibiting CP close to zero for \(\lambda_3\) in high-dimensional, low-sample scenarios. The proposed methods improve the performance of the IW estimator for $\lambda_2$ and $\lambda_3$ by correcting the bias of its eigenvalue estimates in the $n=100, p=1000$ case. Similar to the results from the first setting, SPOET generally maintains robust performance but tends to produce conservative confidence intervals that exceed the nominal coverage level. In moderate-dimensional scenarios with large samples ($n=500$), all methods achieve comparable estimation accuracy and CP values close to the nominal level.

\begin{table}[t!]
\begin{center}
\caption{Average relative errors and coverage probabilities (CP) of the estimated eigenvalues over 100 replications under the second setting. NA indicates that the value is not available.}
{\scriptsize
\begin{tabular}{cccccccccccccccc}
\toprule\noalign{}
 & & & \multicolumn{2}{c}{SC} & \multicolumn{2}{c}{IW} &  \multicolumn{2}{c}{SPOET} & \multicolumn{2}{c}{SIW} & \multicolumn{2}{c}{IW-PHC} & \multicolumn{2}{c}{IW-PC} \\
\cmidrule(lr){4-5} \cmidrule(lr){6-7} \cmidrule(lr){8-9} \cmidrule(lr){10-11} \cmidrule(lr){12-13}  \cmidrule(lr){14-15} 
 & $n$ & $p$ & $\text{Err}_\lambda$ & CP & $\text{Err}_\lambda$ & CP & $\text{Err}_\lambda$ & CP  & $\text{Err}_\lambda$ & CP & $\text{Err}_\lambda$ & CP  & $\text{Err}_\lambda$ & CP\\
\midrule\noalign{}
\multirow{4}{*}{$\lambda_1$} & 100 & 500 &  0.1228 & NA & 0.1218 & 0.95 & 0.1256 & 0.94 & 0.1213 & 0.95 & 0.1212 & 0.93 & 0.1214 & 0.93 \\ 
         & 100 & 1000   &  0.1281 & NA & 0.1248 & 0.95 & 0.1286 & 0.94 & 0.2453 & 0.53 & 0.1266 & 0.93 & 0.1264 & 0.93 \\ 
       
         % & 100 & 1500   &   0.1507 & NA & 0.27468 & 0.5500 & 0.1507 &  NA & 0.1045 &  0.9500 & 0.1117 & 0.9300 & 0.1096 & 0.9200  \\ 
 & 500 & 500 & 0.0512 & NA & 0.0511 & 0.93 & 0.0512 & 0.96 & 0.0560 & 0.93 & 0.0509 & 0.93 & 0.0509 & 0.93 \\ 
         & 500 & 1000   & 0.0478 & NA & 0.0526 & 0.94 & 0.0473 & 0.98 & 0.0507 & 0.95 & 0.0482 & 0.94 & 0.0477 & 0.94 \\ 
          
         % & 500 & 1500   &  0.0557 & NA & 0.0643 & 0.8800 & 0.0557 & NA & 0.0519 & 0.9700 & 0.0522 & 0.9400 & 0.0519 & 0.9500 \\ 
\addlinespace
\multirow{4}{*}{$\lambda_2$} & 100 & 500 & 0.1089 & NA & 0.1072 & 0.93 & 0.1103 & 0.97 & 0.1745 & 0.93 & 0.1117 & 0.92 & 0.1116 & 0.91\\ 
         & 100 & 1000   & 0.1189 & NA & 0.1223 & 0.96 & 0.1197 & 0.96 & 0.6610 & 0.63 & 0.1206 & 0.92 & 0.1185 & 0.90 \\ 
         % & 100 & 1500   &   0.1453 & NA & 0.2768 & 0.4700 & 0.1453 & NA & 0.1000 & 0.9900 & 0.0823 & 0.9500 & 0.0863 & 0.9000 \\ 
 & 500 & 500 & 0.0458 & NA & 0.0466 & 0.94 & 0.0458 & 0.96 & 0.0460 & 0.97 & 0.0465 & 0.93 & 0.0456 & 0.97 \\ 
         & 500 & 1000   &0.0519 & NA & 0.0528 & 0.93 & 0.0523 & 0.94 & 0.0588 & 0.92 & 0.0544 & 0.91 & 0.0551 & 0.89 \\ 
         % & 500 & 1500   &  0.0462 & NA & 0.0570 & 0.8900 & 0.0462 & NA & 0.0449 & 0.9800 & 0.0447 & 0.9500 & 0.0445 & 0.9300 \\ 
\addlinespace
\multirow{4}{*}{$\lambda_3$} & 100 & 500 & 0.1294 & NA & 0.1869 & 0.82 & 0.1084 & 0.94 & 0.1662 & 0.89 & 0.1066 & 0.88 & 0.1063 & 0.86 \\ 
          & 100  & 1000   & 0.2105 & NA & 0.5020 & 0.00 & 0.1259 & 0.95 & 0.9119 & 0.69 & 0.1112 & 0.94 & 0.1187 & 0.87 \\ 
          % & 100  & 1500   &   0.2900 & NA & 0.7301 & 0.0000 & 0.2900 & NA & 0.1227 & 0.9600 & 0.1003 & 0.8700 & 0.1036 & 0.8000 \\ 
 & 500 & 500 & 0.0470 & NA & 0.0521 & 0.94 & 0.0467 & 0.96 & 0.0450 & 0.96 & 0.0463 & 0.97 & 0.0447 & 0.97 \\ 
         & 500 & 1000   & 0.0585 & NA & 0.0877 & 0.78 & 0.0494 & 0.94 & 0.0477 & 0.94 & 0.0479 & 0.95 & 0.0467 & 0.94 \\ 
         % & 500 & 1500   &  0.0669 & NA & 0.1140 & 0.6200 & 0.0669 & NA & 0.0538 & 0.9300 & 0.0537 & 0.9300 & 0.0544 & 0.9400 \\ 
\bottomrule\noalign{}
\end{tabular}
}
\label{tab:eigval2}
\end{center}
\end{table}

\subsection{Estimation of eigenvectors}
In this subsection, we assess the accuracy of eigenvector estimation using the metric 
\(\text{err}_{\bm{\xi}} := 1 - \left(\bm{\xi}_k(\bm{\Sigma})^\top \bm{\xi}_k(\bm{\Sigma}_0)\right)^2\), 
where \(\bm{\xi}_k(\bm{\Sigma})\) and \(\bm{\xi}_k(\bm{\Sigma}_0)\) denote the estimated and true \(k\)th eigenvectors, respectively. For Bayesian methods, point estimates are obtained by averaging posterior samples.  Since eigenvectors lie on a manifold, their posterior means are computed using the Grassmann average method \citep{hauberg2014grassmann}. 
Coverage probability for eigenvectors is omitted, as it is defined elementwise and thus not directly comparable across different methods.

We adopt the same spiked covariance structures and experimental settings as those used in the main manuscript. The number of observations and variables, as well as the number of spikes, are set identically. For the proposed method, the hyperparameters of the inverse-Wishart prior are also set in the same way as in the main text: \( \bm{A}_n = 0.1 \times I_p \) and \( \nu_n = 2p + 2 \). We compare the proposed method with four existing approaches: sample covariance (SCOV), inverse-Wishart posterior (IW), SPOET \citep{wang2017asymptotics}, and SIW \citep{berger2020bayesian}.  Since IW-PHC yields eigenvectors identical to those derived from the inverse-Wishart (IW) posterior, it is treated as equivalent to IW in our experiments.

For the estimation of eigenvectors, Tables~\ref{tab:eigvec} indicates that, for \(n = 100\), the SCOV, IW (IW-PHC), SPOET, and IW-PC methods yield comparable estimation errors across all eigenvectors. The SIW method exhibits slightly higher accuracy for all eigenvectors across both settings. However, IW-PC shows substantially larger errors for the second eigenvector, particularly when \(n = 100\) and \(p = 500\). In contrast, when \(n = 500\), the estimation of the leading eigenvector is highly stable across all methods, with even the simple sample covariance estimator (SCOV) achieving accuracy comparable to that of more sophisticated approaches. 
In this paper, we establish asymptotic theoretical results for eigenvector estimation, but do not explicitly address the challenges of eigenvector estimation in high-dimensional settings. This remains an important direction for future research.

\begin{table}[t!]
\begin{center}
\caption{Average estimation errors  of the eigenvectors based on 100 simulation replications for the first and second settings with \(n = 100\) and \(n = 500\).}
{\footnotesize
\begin{tabular}{ccccccccc}
\toprule
 & eigenvector & $n$ & $p$ & SCOV & IW & SPOET & SIW & IW-PC \\
\midrule
\multirow{12}{*}{Setting 1} 
 & \multirow{4}{*}{$\bm\xi_1$} & 100 & 500 & 0.1344 & 0.1365 & 0.1348 & 0.1353 & 0.1365 \\ 
 & & 100  & 1000   & 0.1431 & 0.1503 & 0.1433 & 0.1477 & 0.1472 \\ 
 & & 500  & 500   & 0.0198 & 0.0201 & 0.0197 & 0.0196 & 0.0201 \\ 
 & & 500  & 1000   & 0.0243 & 0.0246 & 0.0244 & 0.0243 & 0.0248 \\ 
\addlinespace
 & \multirow{4}{*}{$\bm\xi_2$} & 100 & 500 & 0.1869 & 0.1860 & 0.1891 & 0.1827 & 0.2152 \\ 
 & & 100  & 1000   & 0.1860 & 0.1997 & 0.1862 & 0.1913 & 0.1960 \\ 
 & & 500  & 500   & 0.0270 & 0.0274 & 0.0271 & 0.0269 & 0.0275 \\ 
 & & 500  & 1000   & 0.0349 & 0.0355 & 0.0352 & 0.0349 & 0.0357 \\ 
\addlinespace
 & \multirow{4}{*}{$\bm\xi_3$} & 100 & 500 & 0.1478 & 0.1470 & 0.1516 & 0.1424 & 0.1454 \\ 
 & & 100  & 1000   & 0.2100 & 0.2202 & 0.2127 & 0.2106 & 0.2126 \\ 
 & & 500  & 500   & 0.0272 & 0.0276 & 0.0280 & 0.0273 & 0.0276 \\ 
 & & 500  & 1000   & 0.0470 & 0.0476 & 0.0484 & 0.0471 & 0.0477 \\ 
\addlinespace
\addlinespace
\multirow{12}{*}{Setting 2} 
 & \multirow{4}{*}{$\bm\xi_1$} & 100 & 500 & 0.0067 & 0.0068 & 0.0067 & 0.0077 & 0.0068 \\ 
 & & 100  & 1000   & 0.0122 & 0.0123 & 0.0121 & 0.2244 & 0.0123 \\ 
 & & 500  & 500   & 0.0013 & 0.0013 & 0.0013 & 0.0013 & 0.0013 \\ 
 & & 500  & 1000   & 0.0026 & 0.0026 & 0.0026 & 0.0026 & 0.0026 \\ 
\addlinespace
 & \multirow{4}{*}{$\bm\xi_2$} & 100 & 500 & 0.0364 & 0.0366 & 0.0361 & 0.0505 & 0.0367 \\ 
 & & 100  & 1000   & 0.0636 & 0.0643 & 0.0632 & 0.4222 & 0.0645 \\ 
 & & 500  & 500   & 0.0070 & 0.0070 & 0.0070 & 0.0070 & 0.0071 \\ 
 & & 500  & 1000   & 0.0131 & 0.0132 & 0.0131 & 0.0131 & 0.0132 \\ 
\addlinespace
 & \multirow{4}{*}{$\bm\xi_3$} & 100 & 500 & 0.1150 & 0.1166 & 0.1144 & 0.1313 & 0.1162 \\ 
 & & 100  & 1000   & 0.2012 & 0.2098 & 0.2000 & 0.4066 & 0.2045 \\ 
 & & 500  & 500   & 0.0237 & 0.0239 & 0.0237 & 0.0238 & 0.0239 \\ 
 & & 500  & 1000   & 0.0458 & 0.0463 & 0.0459 & 0.0460 & 0.0462 \\ 
\bottomrule
\end{tabular}
}
\label{tab:eigvec}
\end{center}
\end{table}

\end{document}